\newtheorem{lemma}{Lemma}[section]
\newtheorem{theorem}[lemma]{Theorem}
\theoremstyle{definition}
\theoremstyle{remark}
\newcommand{\sA}{\mathsf{A}}
\numberwithin{equation}{section} \numberwithin{table}{section}
\newcommand{\Z}{\mathbb{Z}}
\newcommand{\supp}{\mathrm{supp}\,}
\begin{document}
\title[Maximising measures for weighted shift operators]{Lyapunov-maximising measures for pairs of weighted shift operators}
\author{Ian D. Morris}
\address{Department of Mathematics, University of Surrey, Guildford GU2 7XH, U.K.}
\email{i.morris@surrey.ac.uk}

\maketitle
\begin{abstract}
Motivated by recent investigations of ergodic optimisation for matrix cocycles, we study the measures of maximum top Lyapunov exponent for pairs of bounded weighted shift operators on a separable Hilbert space. We prove that for generic pairs of weighted shift operators the Lyapunov-maximising measure is unique, and show that there exist pairs of operators whose unique Lyapunov-maximising measure takes any prescribed value less than $\log 2$ for its metric entropy. We also show that in contrast to the matrix case, the Lyapunov-maximising measures of pairs of bounded operators are in general not characterised by their supports: we construct explicitly a pair of operators, and a pair of ergodic measures on the 2-shift with identical supports, such that one of the two measures is Lyapunov-maximising for the pair of operators and the other measure is not. Our proofs make use of the Ornstein $\overline{d}$-metric to estimate differences in the top Lyapunov exponent of a pair of weighted shift operators as the underlying measure is varied.

\end{abstract}

\section{Introduction and principal results}

Let $\mathsf{A}$ be a compact set of $d \times d$ real matrices. The joint spectral radius of $\mathsf{A}$, which we denote by $\varrho(\mathsf{A})$, is defined to be the maximum possible exponential growth rate of products of matrices from the set $\mathsf{A}$
\begin{align*}\varrho(\mathsf{A})&:=\lim_{n \to \infty}\sup\left\{\left\|A_n\cdots A_1 \right\|^{\frac{1}{n}} \colon A_i \in \mathsf{A}\right\}\\
&=\sup_{(A_i)_{i=1}^\infty \in \mathsf{A}^{\mathbb{N}}} \limsup_{n \to \infty}\left\|A_n \cdots A_1\right\|^{\frac{1}{n}},\end{align*}
which is independent of the choice of norm $\|\cdot\|$ on the vector space of $d \times d$ matrices. The identity between the two formulas above is straightforward to prove, and a proof may be found in, for example, \cite{Ju09}; one may show in particular that the suprema in the above expressions are always attained. It is then natural to ask precisely \emph{which} sequences $(A_i)\in\mathsf{A}^{\mathbb{N}}$ attain the second of these suprema; we will call these sequences \emph{maximising sequences} for $\sA$. This issue was first explicitly raised by J. Lagarias and Y. Wang \cite{LaWa95} and independently by L. Gurvits \cite{Gu95} in 1995, in which it was asked (in equivalent formulations) whether there always exists a periodic maximising sequence. The answer to this question is now known to be negative (\cite{BoMa02}, see also \cite{BlThVl03,Bo08,HaMoSiTh11,JePo15,Ko07}), but the set of possible structures of maximising sequences for a set of matrices remains elusive.

It turns out that insight into the structure of extremal sequences can be gained by replacing the consideration of \emph{sequences} with consideration of \emph{measures}. If $(A_i)_{i=1}^\infty \in \mathsf{A}^{\mathbb{Z}}$ is a maximising sequence for $\sA$ then it is easy to show that the shifted sequence $(A_{i+1})_{i=1}^\infty \in \mathsf{A}^{\mathbb{Z}}$ is maximising also, so the set of maximising sequences is a shift-invariant subset of $\mathsf{A}^{\mathbb{Z}}$. It is therefore perhaps unsurprising that maximising sequences may be broadly characterised in terms of shift-invariant measures. To describe this characterisation let us fix some notation and terminology. Given a compact set $\sA$ of $d \times d$ real  matrices  let $\sigma \colon \sA^\Z \to \sA^\Z$ denote the shift transformation $(A_i)_{i=1}^\infty \mapsto (A_{i+1})_{i=1}^\infty$, which is a continuous transformation of the compact metrisable topological space $\sA^\Z$. The set $\sA$ being understood, let $\mathcal{M}_\sigma$ denote the set of all $\sigma$-invariant Borel probability measures on $\sA^\Z$. We equip $\mathcal{M}_\sigma$ with the weak-* topology which it inherits as a closed subset of $C(\sA^\Z)^*$, and with respect to this topology it is a compact metrisable topological space. For each $\mu \in \mathcal{M}_\sigma$ we define
\[\Lambda(\sA,\mu):=\lim_{n \to \infty}\frac{1}{n}\int \log\|A_n\cdots A_1\|d\mu[(A_i)] = \inf_{n \to \infty}\frac{1}{n}\int \log\|A_n\cdots A_1\|d\mu[(A_i)].\]
If $\mu$ is ergodic then by the subadditive ergodic theorem
\[\lim_{n \to \infty}\frac{1}{n}\log \|A_n\cdots A_1\| = \Lambda(\sA,\mu)\]
for $\mu$-almost-every sequence $(A_i)_{i=1}^\infty \in \sA^\Z$. If $\mu \in \mathcal{M}_\sigma$ and $\Lambda(\sA,\mu)=\log\varrho(\sA)$ then it follows from this (together with the ergodic decomposition theorem) that $\mu$-almost-every sequence in $\sA^\Z$ is a maximising sequence. For this reason if $\Lambda(\sA,\mu)=\log\varrho(\sA)$ we call the measure $\mu$ a \emph{maximising measure} for $\sA$.

A proof of the following relatively simple result was given previously as Theorem 2.1 and Proposition 2.2 in \cite{Mo13}, but this idea was used implicitly by T. Bousch and J. Mairesse in \cite{BoMa02}:
\begin{theorem}
Let $\sA$ be a compact set of $d \times d$ real  matrices with nonzero joint spectral radius. Then we have
\[\log\varrho(\sA)=\sup_{\mu \in \mathcal{M}_\sigma} \inf_{n \geq1}\frac{1}{n}\int_{\sA^\Z}\log\|A_n\cdots A_1\|d\mu[(A_i)].\]
The set of all maximising measures of $\mathsf{A}$ is a nonempty, compact, convex subset of $\mathcal{M}_\sigma$, and its extremal points are precisely its ergodic elements.
\end{theorem}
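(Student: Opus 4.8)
The plan is to prove the variational identity by a two–sided inequality and then read off the structural properties of the maximising set $\mathcal{M}_{\max}:=\{\mu\in\mathcal{M}_\sigma:\Lambda(\sA,\mu)=\log\varrho(\sA)\}$ from the analytic behaviour of the functional $\mu\mapsto\Lambda(\sA,\mu)$. Write $f_n(\om):=\log\|A_n\cdots A_1\|\in[-\infty,\infty)$ for $\om=(A_i)\in\sA^\Z$; then each $f_n$ is upper semicontinuous and bounded above, the sequence $(f_n)$ is subadditive in the sense $f_{m+n}\le f_m+f_n\circ\sigma^m$, and for $\mu\in\mathcal{M}_\sigma$ the sequence $n\mapsto\int f_n\,d\mu$ is subadditive and bounded above, so that $\Lambda(\sA,\mu)=\inf_n\tfrac1n\int f_n\,d\mu=\lim_n\tfrac1n\int f_n\,d\mu$.

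One inequality is elementary. Since $n\mapsto\sup\{\|A_n\cdots A_1\|:A_i\in\sA\}$ is submultiplicative, Fekete's lemma gives $\tfrac1n\log\sup\{\|A_n\cdots A_1\|\}\to\log\varrho(\sA)$; feeding the bound $\|A_n\cdots A_1\|\le\sup\{\|A_n\cdots A_1\|\}$ into $\tfrac1n\int f_n\,d\mu$ and letting $n\to\infty$ yields $\Lambda(\sA,\mu)\le\log\varrho(\sA)$ for every $\mu\in\mathcal{M}_\sigma$. Hence $\sup_\mu\Lambda(\sA,\mu)\le\log\varrho(\sA)$, and $\mathcal{M}_{\max}=\{\mu\in\mathcal{M}_\sigma:\Lambda(\sA,\mu)\ge\log\varrho(\sA)\}$. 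The reverse inequality is the one substantive step, and I would deduce it from the Berger--Wang theorem: given $\varepsilon>0$ choose $A_1,\dots,A_n\in\sA$ with $\rho(A_n\cdots A_1)\ge(\varrho(\sA)-\varepsilon)^n$, let $p\in\sA^\Z$ be the period-$n$ point repeating $(A_1,\dots,A_n)$, and set $\mu_\varepsilon:=\tfrac1n\sum_{k=0}^{n-1}\delta_{\sigma^k p}\in\mathcal{M}_\sigma$; since the top Lyapunov exponent of a periodic-orbit measure equals $1/n$ times the logarithm of the spectral radius of the corresponding period product, $\Lambda(\sA,\mu_\varepsilon)=\tfrac1n\log\rho(A_n\cdots A_1)\ge\log(\varrho(\sA)-\varepsilon)$, so $\sup_\mu\Lambda(\sA,\mu)\ge\log\varrho(\sA)$ on letting $\varepsilon\downarrow0$. (If one prefers to avoid Berger--Wang, the same inequality follows by taking a weak-$*$ accumulation point of empirical measures $\tfrac1N\sum_{k<N}\delta_{\sigma^k\om^{(N)}}$ formed from near-optimal finite products and bounding $\int f_m\,d\mu$ below by a subadditive tiling estimate; the delicate point is then uniform control of the $O(1)$ boundary terms, which is where the real work lies.)

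Granting the variational identity, the remaining assertions are soft. The functional $\mu\mapsto\Lambda(\sA,\mu)=\inf_n\tfrac1n\int f_n\,d\mu$ is an infimum of upper semicontinuous functions of $\mu$ — each $\mu\mapsto\int f_n\,d\mu$ is upper semicontinuous on $\mathcal{M}_\sigma$ because $f_n$ is upper semicontinuous and bounded above — hence is itself upper semicontinuous; as $\mathcal{M}_\sigma$ is compact it attains its maximum, which equals $\log\varrho(\sA)$, so $\mathcal{M}_{\max}\ne\emptyset$. Being the set on which an upper semicontinuous function attains its maximum, $\mathcal{M}_{\max}$ is closed in the compact space $\mathcal{M}_\sigma$, hence compact. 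Moreover $\Lambda(\sA,\cdot)$ is \emph{affine} on $\mathcal{M}_\sigma$, since it is the pointwise limit of the affine maps $\mu\mapsto\tfrac1n\int f_n\,d\mu$; thus $\mathcal{M}_{\max}$, a superlevel set of an affine function, is convex, and in fact it is a \emph{face} of $\mathcal{M}_\sigma$: if $\mu=t\mu_1+(1-t)\mu_2$ lies in $\mathcal{M}_{\max}$ with $\mu_1,\mu_2\in\mathcal{M}_\sigma$ and $t\in(0,1)$, then $\log\varrho(\sA)=\Lambda(\sA,\mu)=t\Lambda(\sA,\mu_1)+(1-t)\Lambda(\sA,\mu_2)$ together with $\Lambda(\sA,\mu_i)\le\log\varrho(\sA)$ forces $\mu_1,\mu_2\in\mathcal{M}_{\max}$.

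Finally, for the extreme points I would invoke the standard fact that $\mathcal{M}_\sigma$ is a Choquet simplex whose extreme points are precisely the ergodic measures, together with the general observation that the extreme points of a face $F$ of a convex set $K$ coincide with $F\cap\operatorname{ext}K$ (an extreme point of $K$ lying in $F$ is automatically extreme in $F$; conversely, if $\mu\in F$ is extreme in $F$ and $\mu=t\nu_1+(1-t)\nu_2$ with $\nu_i\in K$ and $t\in(0,1)$, the face property puts $\nu_1,\nu_2\in F$ and extremality in $F$ gives $\nu_1=\nu_2=\mu$). Applying this with $K=\mathcal{M}_\sigma$ and $F=\mathcal{M}_{\max}$ identifies the extreme points of $\mathcal{M}_{\max}$ with its ergodic elements, completing the proof. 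I expect that the only genuine obstacle in this programme is the reverse inequality of the variational formula; everything downstream of it is a routine combination of weak-$*$ compactness, the affineness of $\Lambda(\sA,\cdot)$, and elementary convexity.
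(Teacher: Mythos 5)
Your argument is correct, but its central step takes a genuinely different route from the one the paper relies on. The paper cites \cite{Mo13} for this theorem rather than reproving it, and in \S\ref{se:part2} it establishes the analogous identity $\log\varrho(L_0^\phi,L_1^\phi)=\sup_\mu\Lambda(\phi,\mu)$ for weighted shifts by appealing to the Schreiber--Sturman--Stark subadditive variational principle (Theorem \ref{th:stst}), i.e.\ by taking weak-$*$ limits of empirical measures along near-optimal orbit segments and controlling the boundary terms --- exactly the alternative route you mention parenthetically. Your main route instead deduces the ``$\geq$'' direction from the Berger--Wang formula $\varrho(\sA)=\sup_n\sup\rho(A_n\cdots A_1)^{1/n}$ via periodic-orbit measures; this is clean and shortens the hard direction to a one-line computation of the Lyapunov exponent of a periodic measure, but it imports a deep matrix-specific result. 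It is worth noting, since it is precisely the theme of the paper you are reading, that the Berger--Wang route does not generalise: Gurvits' example (\eqref{eq:gurv} here) shows the spectral radius formula \emph{fails} for pairs of weighted shift operators, so in that setting one is forced back onto the subadditive/empirical-measure argument. Everything downstream of the variational identity in your write-up --- upper semicontinuity of $\Lambda(\sA,\cdot)$ as an infimum of u.s.c.\ affine functionals, hence nonempty compact $\mathcal{M}_{\max}$; affineness of $\Lambda(\sA,\cdot)$ as a pointwise limit of affine maps, hence $\mathcal{M}_{\max}$ a face; and the identification $\operatorname{ext}(\mathcal{M}_{\max})=\mathcal{M}_{\max}\cap\operatorname{ext}(\mathcal{M}_\sigma)=\mathcal{M}_{\max}\cap\mathcal{E}_\sigma$ via the face property and the Choquet-simplex structure of $\mathcal{M}_\sigma$ --- is correct and matches the standard argument (essentially \cite[Proposition A.5]{Mo13} as invoked in \S\ref{se:part2}), modulo minor care with $-\infty$ values of $\log\|A_n\cdots A_1\|$, which you flag and which indeed causes no trouble because everything is bounded above.
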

We remark that the exceptional condition $\varrho(\sA)=0$ can arise only under extremely restricted circumstances: if $\varrho(\sA)=0$ then the elements of $\sA$ can be simultaneously conjugated to upper-triangular matrices with zero diagonal, see for example \cite{Ju09}.
The structure of the set of maximising measures and its relationship with maximising sequences is further elucidated in the following result (\cite[Theorem 2.3]{Mo13}):
\begin{theorem}\label{th:mather}
Let $\sA$ be a compact set of $d \times d$ real matrices. Then there exists a nonempty closed $\sigma$-invariant set $Z \subseteq \sA^\Z$ with the following properties:
\begin{enumerate}[(i)]
\item
For every $\mu \in \mathcal{M}_\sigma$, the measure $\mu$ is maximising for $\mathsf{A}$ if and only if $\mu(Z)=1$.
\item
Every sequence $(A_i) \in Z$ is a maximising sequence.
\item
If $(A_i)\in\mathsf{A}^{\mathbb{Z}}$ is a maximising sequence and $f \colon \mathsf{A}^{\mathbb{Z}}\to\mathbb{R}$ is a continuous function which is identically zero on $Z$, then $\frac{1}{n}\sum_{k=0}^{n-1}f(\sigma^k[(A_i)]) \to 0$.
\end{enumerate}
\end{theorem}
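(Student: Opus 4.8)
The plan is to transplant the standard machinery of ergodic optimisation from the additive setting to the submultiplicative matrix cocycle $\omega=(A_i)\mapsto\log\|A_n\cdots A_1\|$ generated by $\sA$, writing $\beta:=\log\varrho(\sA)$. The heart of the argument is to produce a continuous ``sub-action''. First I would reduce to the case in which $\sA$ acts irreducibly on $\R^d$: fix a flag $\{0\}=V_0\subset\cdots\subset V_r=\R^d$ invariant under the semigroup generated by $\sA$ with each induced action on $V_j/V_{j-1}$ irreducible; then $\beta=\max_j\log\varrho(\sA|_{V_j/V_{j-1}})$, and the maximising measures and maximising sequences of $\sA$ are controlled by the blocks attaining this maximum, the others contributing exponentially less to $\|A_n\cdots A_1(\omega)\|$. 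For irreducible $\sA$, Barabanov's theorem supplies a norm $\|\cdot\|_\ast$ on $\R^d$ with $\max_{A\in\sA}\|Av\|_\ast=e^{\beta}\|v\|_\ast$ for every $v$; using the field of extremal directions that this determines on $\sA^\Z$ (a one-dimensional ``potential'' datum rather than a scalar function), I would aim to extract a continuous $\psi\colon\sA^\Z\to\R$ and a continuous $h\colon\sA^\Z\to[0,\infty)$ with
\[ n\beta-\log\|A_n\cdots A_1(\omega)\|_\ast=\psi(\omega)-\psi(\sigma^n\omega)+\sum_{k=0}^{n-1}h(\sigma^k\omega)\qquad(n\ge1), \]
so that $\int h\,d\mu=\beta-\Lambda(\sA,\mu)\ge0$ for every $\mu\in\mathcal M_\sigma$; in the reducible case one runs this on each extremal block and glues the outputs.

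Granting this, define $Z:=\bigcap_{k\in\Z}\sigma^{-k}\{h=0\}$. It is closed (as $h$ is continuous), $\sigma$-invariant by construction, and nonempty, since by Theorem~1 there is an ergodic maximising $\mu$, which has $\int h\,d\mu=0$, hence $h\circ\sigma^k=0$ $\mu$-a.e.\ for each $k$ and $\mu(Z)=1$. Part (i) is then immediate: $\mu$ maximising $\iff\int h\,d\mu=0\iff h=0$ $\mu$-a.e.\ $\iff\mu(\sigma^{-k}\{h=0\})=1$ for all $k$ (using $\sigma$-invariance) $\iff\mu(Z)=1$ (countable intersection). Part (ii): if $\omega\in Z$ then $h(\sigma^k\omega)=0$ for all $k$, the displayed identity collapses to $\log\|A_n\cdots A_1(\omega)\|_\ast=n\beta+\psi(\sigma^n\omega)-\psi(\omega)$, and dividing by $n$, using boundedness of $\psi$ and equivalence of norms on $\Matd$, gives $\tfrac1n\log\|A_n\cdots A_1(\omega)\|\to\beta$, so $\omega$ is a maximising sequence.

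For part (iii), suppose $\omega$ is a maximising sequence and $f$ is continuous with $f|_Z\equiv0$ but $\tfrac1n\sum_{k=0}^{n-1}f(\sigma^k\omega)\not\to0$; choose $n_j\to\infty$ along which this average stays bounded away from $0$ and the empirical measures $\tfrac1{n_j}\sum_{k<n_j}\delta_{\sigma^k\omega}$ converge weak-* to some $\nu\in\mathcal M_\sigma$, so $\int f\,d\nu\ne0$. The sub-action identity gives $\sum_{k<n_j}h(\sigma^k\omega)=n_j\beta-\log\|A_{n_j}\cdots A_1(\omega)\|_\ast+O(1)$, which is $o(n_j)$ because $\omega$ is maximising, whence $\int h\,d\nu=0$, i.e.\ $\nu$ is maximising. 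By (i), $\nu(Z)=1$, forcing $\int f\,d\nu=0$, a contradiction. (An alternative route to $\nu$ being maximising, if one does not wish to rely on the sub-action here, is the direct submultiplicativity estimate: chopping $\log\|A_{n_j}\cdots A_1(\omega)\|$ into blocks of a fixed length $m$ and averaging over all offsets yields $\int\tfrac1m\log\|A_m\cdots A_1\|\,d\nu\ge\beta$ for every $m$, hence $\Lambda(\sA,\nu)\ge\beta$.)

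The hard part will be the construction of the continuous sub-action in the first paragraph. There is no scalar potential to feed into a transfer-operator or max-plus fixed-point argument as in the additive Walters/Bousch theory: the ``potential'' is the extremal-direction datum supplied by the Barabanov norm, which is tractable only because of the semigroup structure, and the quantities $\log\|A_n\cdots A_1\|_\ast-n\beta$ need not converge, so turning them into genuinely continuous $\psi,h$ requires real work. A secondary difficulty — bookkeeping rather than a single estimate — is the reducible case: when several irreducible quotients of the flag are extremal, the per-block sets must be combined so that \emph{both} implications of (i) survive. Finally, the (iii) step implicitly needs the growth rate along a maximising sequence to be a genuine limit equal to $\beta$ rather than merely a limsup; this point, and the uniformity in the offset of the block-chopping estimate, are the quantitative inputs beyond the construction itself, and I would expect them also to follow from the sub-action.
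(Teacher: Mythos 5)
The paper does not reprove this theorem; it is cited as Theorem~2.3 of [Mo13]. Evaluating your attempt on its own merits: the overall strategy --- a Ma\~n\'e-type sub-action whose non-negative defect $h$ vanishes exactly on a closed invariant Mather set $Z$ --- is the right heuristic, and your derivations of (i)--(iii) \emph{from} the proposed identity are mostly sound. But the identity itself cannot hold. You ask for continuous $\psi$ and $h\geq 0$ on $\sA^\Z$ with
\[
n\beta - \log\|A_n\cdots A_1(\omega)\|_* = \psi(\omega)-\psi(\sigma^n\omega) + \sum_{k=0}^{n-1}h(\sigma^k\omega)
\]
for every $n\geq 1$ and every $\omega$. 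Solve the $n=1$ case for $h(\omega)$, substitute into the $n=2$ case (using $A_1(\sigma\omega)=A_2(\omega)$), and the $\psi$ and $\beta$ terms cancel, leaving $\|A_2(\omega)A_1(\omega)\|_*=\|A_2(\omega)\|_*\,\|A_1(\omega)\|_*$ for every $\omega$. So the extremal norm would have to be \emph{multiplicative} on all two-fold products, when in general it is only submultiplicative; your identity would force the subadditive quantity $\log\|A_n\cdots A_1\|_*$ to be a genuine Birkhoff sum of a single continuous function, which it is not. Downgrading to the one-sided inequality $\log\|A_n\cdots A_1\|_*\leq n\beta+\psi(\sigma^n\omega)-\psi(\omega)-\sum_{k<n}h(\sigma^k\omega)$ only salvages the ``only if'' direction of (i) (maximising $\Rightarrow\mu(Z)=1$); the converse, and all of (ii) and (iii), evaporate, because the defect between the two sides is no longer captured by $\sum h$.

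The fix is a change of phase space rather than a sharper estimate. On the projective skew-product over $\sA^\Z\times P(\R^d)$ the function $(\omega,[v])\mapsto\log\bigl(\|A_1(\omega)v\|_*/\|v\|_*\bigr)$ is a genuine additive potential whose Birkhoff sums reproduce $\log\bigl(\|A_n\cdots A_1 v\|_*/\|v\|_*\bigr)$, and the Barabanov property bounds these sums by $n\beta$. One then works with the sets of directions realising the extremal growth $\|A_n\cdots A_1v\|_*=e^{n\beta}\|v\|_*$, which for each $\omega$ form a nested family of nonempty compact cones by irreducibility and compactness of $\sA$, and projects the resulting invariant set down to $\sA^\Z$. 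This is, in outline, how [Mo13] constructs $Z$: the continuity needed for (ii)--(iii) is extracted from the structure of these extremal cones rather than from a scalar continuous coboundary defect on the base. You flagged the construction of the sub-action as ``the hard part,'' but the obstruction is sharper than ``needs real work'': in the scalar, base-space form you wrote, the object does not exist. Your reduction to the irreducible case, and your observation that (iii) implicitly requires a genuine limit along maximising sequences rather than a limsup, are by contrast the kind of bookkeeping that does go through once the correct projective-bundle construction is in hand.
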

Clause (i) of the above theorem is analogous to the ``subordination principle'' in ergodic optimisation noted by T. Bousch \cite{Bo01}. If $T \colon X \to X$ is a continuous transformation of a compact metric space, $f \colon X \to \mathbb{R}$ a continuous function and $\mathcal{M}_T$ the set of $T$-invariant Borel probability measures on $X$, then the maximising measures of $f$ are defined to be those elements of $\mathcal{M}_T$ which achieve the supremum $\sup_{\mu\in\mathcal{M}_T}\int f\,d\mu$. Under suitable regularity conditions on $T$ and $f$, these measures are characterised among the $T$-invariant measures by their being supported in a $T$-invariant set $Z\subseteq X$ in a manner similar to (i) above: see for example \cite{Bo01,CoLoTh01,Mo07}. It is known that the subordination principle may be violated if the regularity of $f$ or $T$ is sufficiently poor, but so far the only known examples of this phenomenon are given by nonconstructive existence proofs \cite{BoJe02,Je06b}.

Continuing the line of inquiry started by Lagarias, Wang and Gurvits it is natural to ask if the set of possible maximising sequences of a matrix set admits any stronger characterisation than that imposed by Theorem \ref{th:mather}. Restricting for a moment to the case where $\sA$ is a pair of matrices, we may naturally identify $\sA^\Z$ with the space $\Sigma_2:=\{0,1\}^\Z$. In this framework the following question becomes natural: which sets of invariant measures on $\Sigma_2$ which are consistent with Theorem \ref{th:mather}(i) correspond to the set of maximising measures of some pair of matrices $\sA$? For example, if the support of a measure $\mu$ on $\Sigma_2$ is uniquely ergodic, can $\{\mu\}$ be the set of maximising measures of some pair of matrices $\sA$? Is this possible  if $\mu$ has nonzero entropy? Does a generic pair of matrices $\sA$ have a unique maximising measure? While some useful information has been obtained for specific classes of matrices (see for example \cite{BoRa13,HaMoSi13,JePo15,Mo10b}) general answers to these questions currently seem distant.

When compared to similar questions in ergodic optimisation (see e.g. \cite{Bo08,Co13,Mo08,YuHu99}) the investigation of generic properties of maximising measures of matrix sets is hampered by the difficulty of constructing advantageous perturbations in a finite-dimensional setting. In this article, inspired by earlier work of L. Gurvits \cite{Gu95}, we investigate a related, simpler problem by studying the maximising sequences and maximising measures of pairs of weighted shift operators (defined below) on a separable real Hilbert space. In this context the infinite number of degrees of freedom makes generic phenomena  much easier to study, while at the same time the simple dynamical behaviour of weighted shift operators permits us very strong control on the growth rates of their products. This allows us to give answers to the analogues of the previous questions in the context of this class of operators on Hilbert spaces.


Here and throughout the paper, let $\mathcal{H}$ denote a real separable Hilbert space spanned by the orthonormal basis $\{e_i \colon i \in \mathbb{Z}\}$. We let $\mathcal{B}(\mathcal{H})$ denote the set of all bounded linear operators on $\mathcal{H}$, and we equip this set with the operator norm topology. A \emph{weighted shift operator} on $\mathcal{H}$ is a bounded linear operator $L \colon \mathcal{H} \to \mathcal{H}$ such that for each basis element $e_i$, the vector $Le_i$ is proportional to $e_{i+1}$. If $L$ is a weighted shift operator such that $Le_i=\alpha_i e_{i+1}$ for all $i \in \Z$, we may directly compute that $\|L\|=\sqrt{\rho(L^*L)}=\sup_{i \in \Z} |\alpha_i|$, where $\rho$ denotes spectral radius. More generally, if $L_1,\ldots,L_n$ are weighted shift operators such that $L_ke_i=\alpha_{k,i}e_{i+1}$ for all $i \in \Z$ and $k=1,\ldots,n$ then we have $\|L_n\cdots L_1\|=\sup_{i \in \Z}\left|\prod_{k=1}^n \alpha_{k,i+k-1}\right|$. Since our interest in weighted shift operators is restricted to investigating the norms of their compositions we shall incur no loss of generality in considering only those operators for which the coefficients $\alpha_i$ are real and non-negative. Since furthermore we will lose no generality in our conclusions by considering only \emph{invertible} weighted shift operators, we shall assume that the sequences $(\alpha_i)$ are bounded away from zero. We let $\mathcal{W} \subset \mathcal{B}(\mathcal{H})^2$ denote the set of all pairs of bounded, invertible weighted shift operators with positive coefficients. 
 It is clear that the set of all weighted shift operators is a Banach subspace of $\mathcal{B}(\mathcal{H})$ and that the set of all invertible weighted shift operators with positive coefficients is an open subset of that space, so in particular $\mathcal{W}$ is an open subset of a Banach space and hence is a Baire space. Given a pair $(L_0,L_1)\in\mathcal{W}$ we identify $\{L_0,L_1\}^{\mathbb{Z}}$ with $\Sigma_2$ and the set of shift-invariant Borel measures on $\{L_0,L_1\}^{\mathbb{Z}}$ with that on $\Sigma_2$ in the obvious fashion. We prove the following results:

\begin{theorem}\label{th:strictly}
Let $\mu$ be a measure on $\Sigma_2$ whose support is uniquely ergodic. Then there exists an open set $\mathcal{U}\subset \mathcal{W}$ such that for every pair of weighted shift operators $(L_0,L_1) \in \mathcal{U}$, $\mu$ is the unique maximising measure of $(L_0,L_1)$. 
\end{theorem}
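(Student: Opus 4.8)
The plan is to realise $\mu$ as the unique maximising measure of an explicit pair of ``spike'' operators adapted to a reference sequence, and then to show that this persists under perturbation. Write $K:=\mathrm{supp}(\mu)$. First note $(K,\sigma)$ is minimal: it is uniquely ergodic and $\mu$ charges every nonempty relatively open subset of $K$, so any proper closed $\sigma$-invariant subset would carry an invariant measure whose support is strictly smaller than $K$, contradicting uniqueness; in particular $\mu$ is ergodic. Fix $\xi\in K$; by minimality the set $\mathcal L_n$ of length-$n$ words occurring in points of $K$ equals the set of length-$n$ subwords of $\xi$. Now fix $c\in(\tfrac14,1)$, let $L_0^*,L_1^*\in\mathcal B(\mathcal H)$ be the weighted shift operators with coefficients $\alpha^*_{\xi_i,i}=1$, $\alpha^*_{1-\xi_i,i}=c$ for all $i\in\Z$, set $\varepsilon:=\tfrac13(1-c)$, and put $\mathcal U:=\{(M_0,M_1)\in\mathcal W:\|M_0-L_0^*\|<\varepsilon,\ \|M_1-L_1^*\|<\varepsilon\}$, which is open in $\mathcal W$. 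Since the operator norm of a weighted shift is the supremum of its coefficients, a pair $(M_0,M_1)\in\mathcal U$ with coefficients $\alpha_{k,i}$ satisfies $\alpha_{\xi_i,i}-\alpha_{1-\xi_i,i}>1-c-2\varepsilon>0$ for every $i$; hence $\max_k\alpha_{k,i}=\alpha_{\xi_i,i}$, and with $\kappa:=\log\bigl(1+\tfrac{1-c}{2c+1}\bigr)>0$ also $\log\alpha_{\xi_i,i}-\log\alpha_{1-\xi_i,i}\ge\kappa$ for every $i$.

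Fix $(M_0,M_1)\in\mathcal U$ with coefficients $\alpha_{k,i}$, and recall $\log\|M_{w_n}\cdots M_{w_1}\|=\sup_{i\in\Z}\sum_{k=1}^n\log\alpha_{w_k,\,i+k-1}$ for every word $w$. I would extract two facts. (a)~Maximising over $w\in\{0,1\}^n$ coordinatewise gives $\max_w\log\|M_{w_n}\cdots M_{w_1}\|=\sup_i\sum_{l=i}^{i+n-1}\log\alpha_{\xi_l,l}=:q_n$; the sequence $(q_n)$ is subadditive, so $\log\varrho(\{M_0,M_1\})=\lim_n q_n/n$, and each maximising word of length $n$ is a subword of $\xi$. (b)~For an \emph{arbitrary} word $z|_n$, at each position $l=i+k-1$ a mismatch between $z_k$ and $\xi_l$ replaces the coefficient $\alpha_{\xi_l,l}$ by $\alpha_{1-\xi_l,l}$, which is smaller by at least $\kappa$ in the logarithm; hence, with $d_H$ denoting Hamming distance and $\rho_n(z|_n):=\min_{u\in\mathcal L_n}d_H(z|_n,u)$,
\[
\log\|M_{z_n}\cdots M_{z_1}\|\ \le\ \sup_i\Bigl(\sum_{l=i}^{i+n-1}\log\alpha_{\xi_l,l}-\kappa\,d_H(z|_n,\xi_{[i,i+n)})\Bigr)\ \le\ q_n-\kappa\,\rho_n(z|_n),
\]
using that $\{\xi_{[i,i+n)}:i\in\Z\}=\mathcal L_n$ in the last step.

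Integrating (b) against any $\nu\in\mathcal M_\sigma$ and dividing by $n$, I would then invoke the description of the Ornstein metric as the asymptotic $\nu$-expected normalised Hamming distance to a subshift's language, so that $\lim_n\tfrac1n\int\rho_n(z|_n)\,d\nu(z)$ is the $\overline d$-distance from $\nu$ to the set of invariant measures carried by $K$, which by unique ergodicity is $\{\mu\}$; combined with (a) this yields
\[
\Lambda(\{M_0,M_1\},\nu)\ \le\ \log\varrho(\{M_0,M_1\})-\kappa\,\overline d(\nu,\mu)\qquad\text{for all }\nu\in\mathcal M_\sigma .
\]
The maximising set of $(M_0,M_1)$ is nonempty — the analogue in this setting, by the same subadditive arguments, of the first theorem quoted above — so let $\nu$ be maximising; the displayed inequality forces $\overline d(\nu,\mu)=0$. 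Since $\nu\mapsto\tfrac1n\int\log\|M_{z_n}\cdots M_{z_1}\|\,d\nu$ is affine and bounded uniformly in $n$, dominated convergence gives $\Lambda(\{M_0,M_1\},\nu)=\int\Lambda(\{M_0,M_1\},\nu_\omega)\,d\tau(\omega)$ for the ergodic decomposition $\nu=\int\nu_\omega\,d\tau(\omega)$; hence $\tau$-almost every $\nu_\omega$ is itself maximising and at $\overline d$-distance $0$ from the ergodic measure $\mu$, so $\nu_\omega=\mu$ ($\overline d$ being a metric on ergodic measures) and $\nu=\mu$. Thus $\mu$ is the unique maximising measure of every $(M_0,M_1)\in\mathcal U$.

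The main obstacle is to arrange the estimate so that it is \emph{uniform over the whole neighbourhood}: one should resist computing $\Lambda(\{M_0,M_1\},\mu)$ directly, since the perturbed coefficients along $\xi$ need not be dynamically defined and that quantity is correspondingly hard to control, and instead compare $\Lambda(\{M_0,M_1\},\nu)$ with the \emph{exactly} computable $\log\varrho(\{M_0,M_1\})=\lim_n q_n/n$, so that the uncontrolled fluctuations of the weights along $\xi$ cancel between the two sides. The remaining point needing care is the identification of $\lim_n\tfrac1n\int\rho_n(z|_n)\,d\nu$ with $\overline d(\nu,\mu)$, that is, the relevant properties of the Ornstein $\overline d$-metric and of $\overline d$-distance from a measure to a subshift.
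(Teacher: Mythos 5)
Your proposal follows essentially the same route as the paper. The paper proves a quantitative lemma (Theorem~\ref{th:tech-strictly}) giving, under the uniform gap hypothesis $\phi(z_i,i)\geq\phi(1-z_i,i)+\delta$, the inequality $\Lambda(\phi,\nu)\leq\log\varrho(L_0^\phi,L_1^\phi)-\delta\,\inf\{\overline{d}(\nu,\kappa):\kappa(Z)=1\}$ for ergodic $\nu$, and then deduces Theorem~\ref{th:strictly} exactly as you do: set $\phi(z_i,i)=1$, $\phi(1-z_i,i)=0$, take the ball of radius $1/3$ around this $\phi$, and note the gap $\delta=1/3$ persists. Your facts (a) and (b) and the observation that one should compare $\Lambda(\nu)$ with the exactly computable $\log\varrho=\lim q_n/n$ (rather than with $\Lambda(\mu)$, which is not easily controlled under perturbation) are precisely how the paper proves Theorem~\ref{th:tech-strictly}, just written multiplicatively in the coefficients rather than additively in $\phi$; your ergodic-decomposition closing step coincides with the paper's appeal to $\mathcal{M}_{\max}$ being the closed convex hull of its ergodic elements.

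The one item you flag as needing care is a genuine gap, and it is exactly the content of the paper's Lemma~\ref{le:dbar-handy}: the identification of $\lim_n\frac1n\int\rho_n(z|_n)\,d\nu(z)$ (equivalently, the $\nu$-a.e.\ limit of $\frac1n\inf_{y\in Z}\sum_{i<n}\mathfrak{d}(\sigma^iz,\sigma^iy)$ for ergodic $\nu$) with $\inf\{\overline{d}(\nu,\kappa):\kappa\in\mathcal{M}_\sigma,\ \kappa(Z)=1\}$, which by unique ergodicity is $\overline{d}(\nu,\mu)$. This is not an off-the-shelf citation. The paper proves it via the subadditive ergodic theorem applied to $f_n(x):=-\inf_{y\in Z}\sum_{i=0}^{n-1}\mathfrak{d}(\sigma^ix,\sigma^iy)$, establishing $\lambda_1\le\lambda_2$ by disintegrating a near-optimal joining via the Birkhoff theorem, and $\lambda_2\le\lambda_1$ by constructing a joining as a weak-* limit of empirical measures $\frac1n\sum_{i<n}\delta_{(\sigma^ix,\sigma^iz^{(n)})}$ along a $\nu$-generic point $x$ and near-optimal companions $z^{(n)}\in Z$. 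Without supplying this argument (or an equivalent reference), your proof is incomplete at precisely the point you identified.
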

For every $h \in [0,\log 2)$ there exists a measure on $\Sigma_2$ with entropy $h$ whose support is uniquely ergodic (see e.g. \cite{DeGrSi76,Gr72}), so Theorem \ref{th:strictly} implies in particular that for every $h \in [0,\log 2)$ there exists a pair $(L_0,L_1) \in \mathcal{W}$ whose sole maximising measure has entropy precisely $h$.

In the context of weighted shift operators, generic uniqueness of the maximising measure turns out to be more easily established than in the matrix case:
\begin{theorem}\label{th:unique}
The set of all pairs $(L_0,L_1) \in \mathcal{W}$ having a unique maximising measure is a dense $G_\delta$ subset of $\mathcal{W}$.
\end{theorem}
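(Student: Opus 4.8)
The plan is to exhibit the set of pairs with a unique maximising measure as a countable intersection of dense open sets, using a standard Baire-category argument: uniqueness of the maximising measure for $(L_0,L_1)$ is equivalent to the statement that the (compact, convex) maximising set has diameter zero in some fixed metric $D$ on $\mathcal{M}_\sigma$ compatible with the weak-* topology. So the key object is, for each $k \in \mathbb{N}$, the set
\[
\mathcal{G}_k := \left\{ (L_0,L_1) \in \mathcal{W} : \text{every two maximising measures } \mu,\nu \text{ satisfy } D(\mu,\nu) < \tfrac{1}{k} \right\},
\]
and the theorem will follow once I show each $\mathcal{G}_k$ is open and dense, since $\bigcap_k \mathcal{G}_k$ is exactly the set of pairs with a unique maximising measure.

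Openness of $\mathcal{G}_k$ is the routine part and rests on upper semicontinuity of the maximising set as a function of $(L_0,L_1)$. Concretely: $\log\varrho$ is continuous on $\mathcal{W}$ (it is a uniform limit of the continuous functions $(L_0,L_1)\mapsto \frac1n\log\|L_{i_n}\cdots L_{i_1}\|$ suprema'd over words, and one checks continuity directly from the explicit formula $\|L_n\cdots L_1\|=\sup_i|\prod_k\alpha_{k,i+k-1}|$), and $(\mu,(L_0,L_1))\mapsto\Lambda(\mu,(L_0,L_1))$ is upper semicontinuous jointly (being an infimum of continuous functions). A standard compactness argument then shows that if $(L_0^{(j)},L_1^{(j)})\to(L_0,L_1)$ and $\mu_j$ is maximising for the $j$-th pair with $\mu_j\to\mu$, then $\mu$ is maximising for the limit pair; hence if $(L_0,L_1)\in\mathcal{G}_k$ then all nearby pairs lie in $\mathcal{G}_k$ as well (otherwise one extracts a contradicting convergent sequence of pairs of maximising measures at distance $\ge 1/k$).

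Density of $\mathcal{G}_k$ is where the infinite-dimensional structure of $\mathcal{W}$ is used and is the main obstacle. Given an arbitrary pair $(L_0,L_1)\in\mathcal{W}$ and $\e>0$, I must produce a pair within $\e$ of it whose maximising set has $D$-diameter $<1/k$. The idea, following Gurvits's perturbation philosophy and the mechanism behind Theorem \ref{th:strictly}, is to pick one ergodic maximising measure $\mu_0$ of $(L_0,L_1)$ and perturb the weights so as to strictly favour $\mu_0$ over every measure lying $D$-far from it. Because the norm of a product of weighted shifts is governed by a single Birkhoff-type average along the most favourable basis index $i$ --- the quantity $\frac1n\log\prod_{k}\alpha_{i_k,\,i+k-1}$ --- multiplying the coefficients of $L_0,L_1$ by suitable index-dependent positive factors $e^{\psi(i)}$ perturbs $\Lambda(\mu,\cdot)$ essentially by a term comparable to $\int \varphi\,d\mu$ for a continuous potential $\varphi$ on $\Sigma_2$ built from $\psi$; this is precisely the point where the Ornstein $\overline d$-metric estimate advertised in the abstract controls how $\Lambda(\mu,\cdot)$ varies with $\mu$. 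Choosing $\varphi$ to be a small multiple of a function that separates $\mu_0$ from the weak-* complement of its $1/k$-neighbourhood (e.g.\ $\varphi$ supported on a cylinder and normalised using continuity of $\mu\mapsto\int\varphi\,d\mu$ together with compactness of $\mathcal{M}_\sigma$), one forces the new maximising set into that neighbourhood while moving the operators only $O(\e)$ in operator norm. The delicate steps are (a) verifying that the perturbation $\psi$ can be chosen bounded (so the perturbed operators remain in $\mathcal{W}$ and close to the original in operator norm) while still producing a uniformly strict gain for $\mu_0$ --- this needs the $\overline d$-continuity of $\mu\mapsto\Lambda(\mu,\cdot)$ to rule out measures that are $\overline d$-close to $\mu_0$ but weak-* far, and (b) handling the $\sup_i$ in the norm formula, which means the ``Birkhoff average along the optimal index'' is itself a supremum and one must argue it still localises near orbits generic for $\mu_0$. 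Once density of every $\mathcal{G}_k$ is in hand, Baire's theorem gives that $\bigcap_k\mathcal{G}_k$ is a dense $G_\delta$, completing the proof.
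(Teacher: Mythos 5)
Your overall skeleton matches the paper's: write the uniqueness set as a countable intersection of sets each defined by the maximising set being small in one coordinate, show each is open via upper semicontinuity of $(L_0,L_1)\mapsto\mathcal{M}_{\max}$ (the paper's Lemma~\ref{kuratowski} and Lemma~\ref{gfs}), show each is dense by perturbing towards a well-chosen maximising measure, and invoke Baire. Your choice of a single compatible metric $D$ rather than the paper's family of cylinder functionals $\mu\mapsto\mu([\omega])$ is a cosmetic difference; both give the same $G_\delta$. The openness step you describe is essentially correct and essentially identical to the paper's.

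The genuine gap is in the density step, and it is not a minor one: it is the entire technical content of the theorem, which the paper isolates as Lemma~\ref{xyz} and proves over two pages. You assert that adding a bounded perturbation $\psi$ to the weights changes $\Lambda(\cdot,\mu)$ ``essentially by a term comparable to $\int\varphi\,d\mu$,'' but this is exactly what is \emph{not} true here, and recognising the asymmetry is the crux. Because
\[\Lambda(\phi,\mu)=\lim_n\frac1n\int\sup_{k\in\mathbb{Z}}\sum_{i=0}^{n-1}\phi(x_i,i+k)\,d\mu(x),\]
the $\sup_k$ prevents $\Lambda$ from being affine in $\phi$: one only gets subadditivity (Lemma~\ref{le:simples}), so perturbing $\phi$ by $\lambda\psi$ gives a one-sided bound $\Lambda(\phi+\lambda\psi,\nu)\le\Lambda(\phi,\nu)+\lambda\Lambda(\psi,\nu)$ for all $\nu$. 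To make the perturbation actually \emph{raise} the joint spectral radius by the matching amount, one must exhibit a specific sequence $z$ that is simultaneously $\mu$-generic for the cylinder frequency and attains the growth rate $\log\varrho(L_0^\phi,L_1^\phi)$, and then choose the set of indices on which $\psi$ is ``switched on'' so as to align with the occurrences of $\omega$ along $z$ at the scales $n_j$ where $z$ realises the joint spectral radius. None of this appears in your sketch; you flag points (a) and (b) as ``delicate'' but do not resolve them, and your invocation of the $\overline d$-metric is misplaced — the paper uses $\overline d$ in Theorems~\ref{th:strictly} and~\ref{th:nomather}, not in the proof of Theorem~\ref{th:unique}, where it plays no role.

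Two smaller but substantive slips: first, writing the perturbation as ``index-dependent factors $e^{\psi(i)}$'' — if $\psi$ depends only on $i$ and not on which of $L_0,L_1$ is applied, the same multiplicative factor attaches to every product of length $n$ starting at position $i$, so $\Lambda(\cdot,\mu)$ shifts by a $\mu$-independent constant and the maximising set is unchanged; the perturbation must depend on the symbol $a\in\{0,1\}$ as well as on $i$, as in the paper's $\psi\in\ell_\infty(\mathbb{Z}_2\times\mathbb{Z})$. Second, you say you need to ``rule out measures that are $\overline d$-close to $\mu_0$ but weak-* far,'' which is vacuous: the $\overline d$-topology refines the weak-* topology, so $\overline d$-closeness implies weak-* closeness. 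The real difficulty is measures weak-* close to $\mu_0$ but $\overline d$-far, and in any case this is not the mechanism the paper uses for this theorem.
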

On the other hand, in the infinite-dimensional case the analogue of Theorem \ref{th:mather}(i) is false:
\begin{theorem}\label{th:nomather}
There exist a pair $(L_0,L_1)\in \mathcal{W}$ and two shift-invariant measures $\mu,\nu$ on $\Sigma_2$ such that $\mu$ and $\nu$ have the same support, but $\mu$ is maximising for $\{L_0,L_1\}$ and $\nu$ is not.
\end{theorem}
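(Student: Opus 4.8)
The plan is to take $\mu,\nu$ to be two distinct ergodic measures carried by one and the same minimal subshift $Y\subseteq\Sigma_2$, chosen so that they assign different frequencies to the symbol $0$, and to construct $(L_0,L_1)$ so that the top Lyapunov exponent of an invariant measure $\eta$ is essentially $c_1\bigl(1-\overline{d}(\eta,\mathcal{M}_\sigma(Y))\bigr)+c_2\,\eta([0])$ for suitable constants $c_1>c_2>0$. Concretely: it is classical that there is a minimal subshift $Y\subseteq\Sigma_2$ with exactly two ergodic measures $\mu,\nu$, and (by choosing a generating sequence whose $0$-density oscillates between two values) we may arrange $\mu([0])>\nu([0])$, where $[0]=\{x:x_1=0\}$. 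Since $Y$ is minimal, the only nonempty closed $\sigma$-invariant subset of $Y$ is $Y$ itself, so $\supp\mu=\supp\nu=Y$; it remains to produce operators making $\mu$ maximising and $\nu$ not. Fix any $w=(w_j)_{j\in\Z}\in Y$ and constants $c_1>c_2>0$, and define $(L_0,L_1)\in\mathcal{W}$ by $L_s e_j=\alpha_{s,j}e_{j+1}$ with $\log\alpha_{s,j}:=c_1\mathbf{1}\{s=w_j\}+c_2\mathbf{1}\{s=0\}$; the coefficients lie in $[1,e^{c_1+c_2}]$, so this is a genuine pair of bounded, invertible, positive weighted shifts. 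Write $\Lambda(\eta)$ for $\Lambda(\{L_0,L_1\},\eta)$.

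From the norm formula for products of weighted shifts recalled above, for every $x=(x_k)\in\Sigma_2$ and every $n$,
\[
\log\|L_{x_n}\cdots L_{x_1}\|=c_2\,\#\{k\le n:x_k=0\}\;+\;c_1\sup_{i\in\Z}\#\{k\le n:x_k=w_{i+k-1}\}.
\]
If $x\in Y$ then, by minimality of $Y$, the block $x_1\cdots x_n$ occurs in $w$ and the supremum equals $n$; hence for any $\eta\in\mathcal{M}_\sigma$ supported on $Y$ we obtain, after integrating the displayed identity over $\eta$ and using shift-invariance, $\Lambda(\eta)=c_1+c_2\,\eta([0])$. In particular $\Lambda(\mu)=c_1+c_2\,\mu([0])>c_1+c_2\,\nu([0])=\Lambda(\nu)$, so $\nu$ cannot be maximising once $\mu$ is.

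It remains to check that $\mu$ is maximising, i.e. that $\Lambda(\eta)\le c_1+c_2\,\mu([0])$ for every $\eta\in\mathcal{M}_\sigma$. For ergodic $\eta$: each block $w_{i+1}\cdots w_{i+n}$ is a legal word of $Y$, so the supremum above is at most $n$ minus the least Hamming distance from $x_1\cdots x_n$ to a legal $Y$-word; dividing by $n$, integrating against $\eta$, and passing to the limit, the Ornstein $\overline{d}$-estimates (the main technical device of the paper) identify the resulting limit with $\overline{d}(\eta,\mathcal{M}_\sigma(Y))$, the $\overline{d}$-distance from $\eta$ to the set $\mathcal{M}_\sigma(Y)$ of invariant measures on $Y$, so that $\Lambda(\eta)\le c_2\,\eta([0])+c_1\bigl(1-\overline{d}(\eta,\mathcal{M}_\sigma(Y))\bigr)$. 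Since $\theta\mapsto\theta([0])$ is $1$-Lipschitz for $\overline{d}$ and attains its maximum $\mu([0])$ on the segment $\mathcal{M}_\sigma(Y)$, we have $\eta([0])\le\mu([0])+\overline{d}(\eta,\mathcal{M}_\sigma(Y))$, and therefore
\[
\Lambda(\eta)\le c_1+c_2\,\mu([0])-(c_1-c_2)\,\overline{d}(\eta,\mathcal{M}_\sigma(Y))\le c_1+c_2\,\mu([0]).
\]
By affineness of $\Lambda$ together with the ergodic decomposition the same inequality holds for all $\eta\in\mathcal{M}_\sigma$; combined with the previous paragraph (and $\log\varrho(\{L_0,L_1\})=\sup_\eta\Lambda(\eta)$) this gives $\log\varrho(\{L_0,L_1\})=c_1+c_2\,\mu([0])=\Lambda(\mu)$. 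Hence $\mu$ is maximising and $\nu$ is not, while $\supp\mu=\supp\nu=Y$, which proves the theorem.

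The one genuinely delicate point is the identification, in the last step, of $\lim_n\tfrac1n\int\bigl(\text{least Hamming distance from }x_1\cdots x_n\text{ to a legal }Y\text{-word}\bigr)\,d\eta(x)$ with $\overline{d}(\eta,\mathcal{M}_\sigma(Y))$ — equivalently, controlling how closely a $\eta$-typical orbit segment can be aligned against the subwords of a single fixed point of $Y$. This is exactly where the Ornstein metric machinery is needed, and it is the technical heart of the argument; the choice of a minimal subshift whose invariant measures disagree on a letter frequency, and the role of the position-independent weight $c_2$ in breaking the resulting degeneracy among measures on $Y$, are the conceptual ingredients.
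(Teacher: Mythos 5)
Your proposal is correct, and it takes a genuinely cleaner route than the paper while using the same core inputs (Oxtoby's minimal, non--uniquely-ergodic subshift with two ergodic measures disagreeing on a letter frequency, and Lemma \ref{le:dbar-handy} to convert the $\inf_{y\in Z}$ Birkhoff-type limit into $\overline{d}(\eta,\mathcal{M}_\sigma(Y))$). The decisive difference is your choice of weights $\log\alpha_{s,j}=c_1\mathbf{1}\{s=w_j\}+c_2\mathbf{1}\{s=0\}$: the symmetry-breaking term $c_2\mathbf{1}\{s=0\}$ depends on the \emph{symbol} alone, so it is constant in the shift parameter and factors out of the $\sup$ in the norm formula, giving the exact decomposition $\Lambda(\eta)=c_2\eta([0])+c_1\bigl(1-\overline{d}(\eta,\mathcal{M}_\sigma(Y))\bigr)$. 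Your final step then reduces to the observation that $\theta\mapsto\theta([0])$ is $1$-Lipschitz for $\overline{d}$ together with $c_1>c_2$, which is pleasantly transparent. The paper instead sets $\phi(a,i)=2\mathbf{1}\{a=z_i=1\}+\mathbf{1}\{a=z_i=0\}$, so the symmetry-breaking term depends on the \emph{position} $i$ through $z_i$; consequently the letter-frequency contribution cannot be pulled outside the $\sup_k$, and the argument has to track a coupled quantity $\inf_{y\in Z}\frac{1}{n}\sum\bigl(\mathfrak{d}(\sigma^ix,\sigma^iy)-\chi_{[1]}(\sigma^iy)\bigr)$, first invoking Theorem \ref{th:tech-strictly} to confine the ergodic maximising measures to $\{\mu_1,\mu_2\}$ and then running a separate estimate to eliminate $\mu_2$. (Your version also sidesteps a small inconsistency in the paper's stated identity for $\sum_i\phi(x_i,i+k)$, which as written holds only as an inequality for the $\phi$ actually defined there.) One cosmetic note: you do not really need affineness of $\Lambda(\phi,\cdot)$ to pass from ergodic to general $\eta$; since $\mathcal{M}_{\max}(\phi)$ always contains an ergodic measure, the bound for ergodic measures already forces $\log\varrho(L_0,L_1)\le c_1+c_2\mu([0])$, and the lower bound $\Lambda(\mu)=c_1+c_2\mu([0])$ then closes the argument.
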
 
The proof of Theorem \ref{th:nomather} is explicit and constructive. While T. Bousch and O. Jenkinson have shown that the subordination principle in classical ergodic optimisation can fail to hold in certain cases (\cite{BoJe02,Je06b}), all known examples are nonconstructive, relying on Baire's theorem or the Hahn-Banach theorem. Theorem \ref{th:nomather} might therefore reasonably be described as the most natural known example of failure of the subordination principle.

The remainder of this article is structured as follows.  In \S\ref{se:part2} we indicate the mechanisms by which we shall transform the investigation of pairs of weighted shifted operators into the direct study of their sequences of weights. In that section we also fix some notational conventions which will be used throughout the remainder of the paper, and we recall the definition of the Ornstein $\overline{d}$-metric, which plays an essential role in many subsequent arguments. In \S\ref{se:foundation} we prove some simple but essential lemmas which underpin the proofs of the main theorems; Theorems \ref{th:strictly}--\ref{th:nomather} themselves are proved in \S\ref{se:strictly}--\ref{se:lemmer}. 


\section{Reduction to sequences of weights}\label{se:part2}

Let $\Sigma_2:=\{0,1\}^{\mathbb{Z}}$ with the infinite product topology, and define $\sigma \colon \Sigma_2 \to \Sigma_2$ by $\sigma[(x_i)_{i\in\mathbb{Z}}]:=(x_{i+1})_{i \in \mathbb{Z}}$. The transformation $\sigma$ is a homeomorphism, and $\Sigma_2$ is compact and metrisable. Throughout this article we will frequently use the shorthand $\mathbb{Z}_2:=\{0,1\}$. If $\phi \in \ell_\infty(\mathbb{Z}_2\times\mathbb{Z})$ then we define a pair of weighted shift operators $(L_0^\phi,L_1^\phi)$ by
\[L_a^\phi e_i:=e^{\phi(a,i)}e_{i+1}\]
for all $i \in \mathbb{Z}$ and $a \in \mathbb{Z}_2$. For every $x=(x_i)_{i \in \mathbb{Z}} \in \Sigma_2$ and every $n \geq 1$ we have
\begin{equation}\label{eq:common}\left\|L_{x_n}^\phi\cdots L_{x_1}^\phi \right\|=\exp\left(\sup_{k \in \mathbb{Z}}\sum_{i=0}^{n-1}\phi(x_i,i+k)\right),\end{equation}
an identity which will frequently be used without comment. We leave it to the reader to verify the following simple result:
\begin{lemma}
The map $\ell_\infty(\mathbb{Z}_2\times\mathbb{Z}) \to \mathcal{W}$ defined by $\phi \mapsto (L^\phi_0,L^\phi_1)$ is a homeomorphism.
\end{lemma}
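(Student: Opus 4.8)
The plan is to verify by hand that $\phi\mapsto(L_0^\phi,L_1^\phi)$ is a continuous bijection with continuous inverse; every step reduces to elementary properties of $\exp$ and $\log$ together with the fact, recalled in the introduction, that a weighted shift operator sending $e_i$ to $\gamma_i e_{i+1}$ has operator norm $\sup_{i\in\mathbb{Z}}|\gamma_i|$.

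First I would check that the map is well-defined and bijective. For $\phi\in\ell_\infty(\mathbb{Z}_2\times\mathbb{Z})$ each $L_a^\phi$ has coefficients $e^{\phi(a,i)}$, which are positive and lie between $e^{-\|\phi\|_\infty}$ and $e^{\|\phi\|_\infty}$; hence $(L_0^\phi,L_1^\phi)$ is a pair of bounded invertible weighted shift operators with positive coefficients, i.e.\ lies in $\mathcal{W}$. Conversely any $(L_0,L_1)\in\mathcal{W}$ has $L_ae_i=\alpha_{a,i}e_{i+1}$ with each sequence $(\alpha_{a,i})_i$ positive and, by boundedness and invertibility, satisfying $0<\inf_i\alpha_{a,i}\le\sup_i\alpha_{a,i}<\infty$; thus $\phi(a,i):=\log\alpha_{a,i}$ defines an element of $\ell_\infty(\mathbb{Z}_2\times\mathbb{Z})$, and the two assignments are manifestly mutually inverse.

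Next I would prove continuity of $\phi\mapsto(L_0^\phi,L_1^\phi)$. For each $a$ the operator $L_a^\phi-L_a^\psi$ is the weighted shift $e_i\mapsto(e^{\phi(a,i)}-e^{\psi(a,i)})e_{i+1}$, so $\|L_a^\phi-L_a^\psi\|=\sup_i|e^{\phi(a,i)}-e^{\psi(a,i)}|$, and the elementary bound $|e^s-e^t|\le e^{\max\{|s|,|t|\}}|s-t|$ yields $\|L_a^\phi-L_a^\psi\|\le e^{\max\{\|\phi\|_\infty,\|\psi\|_\infty\}}\|\phi-\psi\|_\infty$; this is (locally Lipschitz, hence) continuity into the product topology on $\mathcal{B}(\mathcal{H})^2$. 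Continuity of the inverse is symmetric: writing the weight sequences of $(L_0,L_1),(M_0,M_1)\in\mathcal{W}$ as $(\alpha_{a,i})$ and $(\beta_{a,i})$, we have $\|L_a-M_a\|=\sup_i|\alpha_{a,i}-\beta_{a,i}|$, and once this quantity is smaller than $c/2$, where $c:=\inf_i\alpha_{a,i}>0$, both sequences take values in $[c/2,\infty)$, on which $\log$ is $(2/c)$-Lipschitz; hence $\|\log\alpha_{a,\cdot}-\log\beta_{a,\cdot}\|_\infty\le (2/c)\sup_i|\alpha_{a,i}-\beta_{a,i}|$, which gives continuity of the inverse at $(L_0,L_1)$.

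There is no genuine obstacle here; the only point demanding a little care is that all of the above estimates be uniform over $i\in\mathbb{Z}$ — this is precisely what links the $\ell_\infty$-norm on the domain with the operator norm on the range — and that the Lipschitz constants be controlled purely by the norm of the point at which continuity is being checked, not by individual coordinates.
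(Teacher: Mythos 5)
Your proof is correct, and since the paper explicitly leaves the verification to the reader there is no stated proof to diverge from; your direct check (bijectivity from the boundedness-and-invertibility characterisation, forward continuity via $|e^s-e^t|\le e^{\max\{|s|,|t|\}}|s-t|$, inverse continuity via local Lipschitzness of $\log$ away from zero) is exactly the intended elementary argument. The only cosmetic point is that your constant $c$ depends on $a\in\{0,1\}$, but taking the minimum over the two values of $a$ handles this trivially.
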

We will establish Theorems \ref{th:strictly}, \ref{th:unique} and \ref{th:nomather} by proving corresponding statements concerning the pairs $(L_0^\phi,L_1^\phi)$ for various $\phi \in \ell_\infty(\mathbb{Z}_2\times\mathbb{Z})$.

For the remainder of the article we let $\mathcal{M}_\sigma$ denote the set of all $\sigma$-invariant Borel probability measures on $\Sigma_2$. Except when indicated otherwise, we equip $\mathcal{M}_\sigma$ with the weak-* topology which it inherits by identification with the positive unit sphere of $C(\Sigma_2)^*$; this is defined to be the smallest topology on $\mathcal{M}_\sigma$ such that $\mu \mapsto \int f\,d\mu$ is continuous for every $f \in C(\Sigma_2)$. The set $\mathcal{M}_\sigma$ thus equipped is a compact, convex, metrisable subspace of the locally convex topological space $C(\Sigma_2)^*$. A sequence of measures $(\mu_n) \in \mathcal{M}_\sigma$ converges to a limit $\mu \in \mathcal{M}_\sigma$ if and only if $\int f\,d\mu_n \to \int f\,d\mu$ for every $f \in C(\Sigma_2)$.
We let $\mathcal{E}_\sigma$ denote the set of ergodic elements of $\mathcal{M}_\sigma$. 

Given $\phi \in \ell_\infty(\mathbb{Z}_2\times\mathbb{Z})$ and $\mu \in \mathcal{M}_\sigma$, we define
\[\Lambda(\phi,\mu):=\lim_{n \to \infty} \frac{1}{n}\int \log\left\|L^\phi_{x_n}\cdots L^\phi_{x_1}\right\|d\mu(x) =\inf_{n \geq1} \frac{1}{n}\int \log\left\|L^\phi_{x_n}\cdots L^\phi_{x_1}\right\|d\mu(x),\]
where the existence of the limit and its identity with the above infimum arise from subadditivity. In view of \eqref{eq:common} we equivalently have
\[\Lambda(\phi,\mu)=\lim_{n \to \infty} \frac{1}{n}\int \sup_{k \in \mathbb{Z}}\sum_{i=0}^{n-1}\phi(x_i,i+k)d\mu(x) =\inf_{n \geq1} \frac{1}{n}\int \sup_{k \in \mathbb{Z}}\sum_{i=0}^{n-1}\phi(x_i,i+k)d\mu(x).\]
When $\mu\in\mathcal{E}_\sigma$ we find by the subadditive ergodic theorem that for $\mu$-a.e. $x \in \Sigma_2$,
\[\Lambda(\phi,\mu)=\lim_{n \to \infty}\frac{1}{n}\log\left\|L^\phi_{x_n}\cdots L^\phi_{x_1}\right\|=\lim_{n \to \infty}\frac{1}{n}\sup_{k \in \mathbb{Z}}\sum_{i=0}^{n-1}\phi(x_i,i+k).\]
The joint spectral radius $\varrho(L_0^\phi,L_1^\phi)$ of the pair of weighted shift operators $L_0^\phi,L_1^\phi$ is defined by
\[\varrho\left(L_0^\phi,L_1^\phi\right):=\lim_{n \to \infty}\sup_{x_1,\ldots,x_n \in\{0,1\}}\left\|L_{x_n}^\phi\cdots L_{x_1}^\phi \right\|^{\frac{1}{n}}=\inf_{n \geq 1}\sup_{x_1,\ldots,x_n \in\{0,1\}}\left\|L_{x_n}^\phi\cdots L_{x_1}^\phi \right\|^{\frac{1}{n}}\]
which is clearly positive and satisfies
\[\log \varrho\left(L_0^\phi,L_1^\phi\right)=\lim_{n \to \infty} \sup_{x \in \Sigma_2} \sup_{k \in \mathbb{Z}} \sum_{i=0}^{n-1}\phi(x_i,i+k).\]
We recall the following theorem of S.J. Schreiber, R. Sturman and J. Stark \cite{Sc98,StSt00}:
\begin{theorem}[Schreiber, Sturman-Stark]\label{th:stst}
Let $T \colon X \to X$ be a continuous transformation of a compact metric space, and let $(f_n)$ be a sequence of continuous functions $f_n \colon X \to \mathbb{R}$ such that $f_{n+m}(x)\leq f_n(T^mx)+f_m(x)$ for all $x \in X$ and $n,m \geq 1$. Let $\mathcal{M}_T$ denote the set of all $T$-invariant Borel probability measures on $X$; then
\[\lim_{n \to \infty} \sup_{x \in X} \frac{1}{n}f_n(x)=\sup_{\mu \in\mathcal{M}_T}\lim_{n \to \infty}\frac{1}{n}\int f_n\,d\mu.\]
\end{theorem}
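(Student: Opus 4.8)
The plan is to prove the two inequalities separately. Write $L^{\star}:=\lim_{n\to\infty}\frac1n\sup_{x\in X}f_n(x)$ for the left-hand side, and for each $\mu\in\mathcal{M}_T$ write $\Lambda(\mu):=\lim_{n\to\infty}\frac1n\int f_n\,d\mu$ for the quantity appearing inside the supremum on the right. Both limits exist, as infima over $n$, by Fekete's subadditivity lemma: $n\mapsto\sup_x f_n(x)$ is subadditive because $\sup_x f_{n+m}(x)\le\sup_x f_n(T^mx)+\sup_x f_m(x)$, and $n\mapsto\int f_n\,d\mu$ is subadditive because $\int f_{n+m}\,d\mu\le\int f_n\circ T^m\,d\mu+\int f_m\,d\mu=\int f_n\,d\mu+\int f_m\,d\mu$ by the $T$-invariance of $\mu$. (Both could a priori equal $-\infty$; this causes no difficulty below, and in every application made in this paper the relevant quantities are finite.) The inequality ``$\ge$'' is then immediate, since $\frac1n\int f_n\,d\mu\le\frac1n\sup_x f_n(x)$ for every $n$ and every $\mu$ gives $\Lambda(\mu)\le L^{\star}$ on letting $n\to\infty$, and the supremum over $\mu$ finishes this direction.

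For the reverse inequality I would exhibit, by a Krylov--Bogolyubov construction, a single measure $\mu\in\mathcal{M}_T$ with $\Lambda(\mu)\ge L^{\star}$. For each $n$ choose $x_n\in X$ with $f_n(x_n)=\sup_{x\in X}f_n(x)$, which is possible because $f_n$ is continuous and $X$ is compact, and set $\mu_n:=\frac1n\sum_{p=0}^{n-1}\delta_{T^p x_n}$. The set of all Borel probability measures on the compact metric space $X$ is weak-* compact and metrisable, so after passing to a subsequence we may assume $\mu_{n_j}\to\mu$ weak-*. For any $g\in C(X)$, the telescoping identity $\int g\circ T\,d\mu_n-\int g\,d\mu_n=\frac1n\bigl(g(T^nx_n)-g(x_n)\bigr)$ produces a quantity of modulus at most $\frac{2\|g\|_\infty}{n}$, so passing to the limit along the subsequence gives $\int g\circ T\,d\mu=\int g\,d\mu$ for every $g\in C(X)$, i.e.\ $\mu\in\mathcal{M}_T$.

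It remains to bound $\Lambda(\mu)$ from below, and for this I would use the standard ``averaging over the base point'' device for subadditive cocycles. Fix $m\ge1$. For each residue $j\in\{0,1,\dots,m-1\}$ and each $n>m$, cut the integer interval $[0,n-1]$ into an initial block of length $j$, a maximal run of consecutive length-$m$ blocks, and a terminal block of length less than $m$; iterating the subadditivity hypothesis bounds $f_n(x)$ by the sum of $f_m(T^{p}x)$ over the starting points $p$ of the length-$m$ blocks, plus the two end terms $f_j(x)$ and $f_r(T^{n-r}x)$ with $r<m$. Summing over $j=0,\dots,m-1$: each index $p$ is the start of a length-$m$ block for exactly one $j$, so the length-$m$ contributions reassemble, up to at most $m-1$ missing boundary terms, into $\sum_{p=0}^{n-1}f_m(T^p x)$; and the end terms are bounded in modulus by a constant depending only on $m$, using $\sup_x|f_i(x)|\le i\|f_1\|_\infty$ for $i\le m$ and $\|f_m\|_\infty<\infty$. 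Dividing by $m$ and then by $n$, one gets for all $x\in X$ and $n>m$ the bound
\[\frac1n f_n(x)\le\frac{C_m}{n}+\frac1m\cdot\frac1n\sum_{p=0}^{n-1}f_m(T^p x),\]
where $C_m$ depends only on $m$. Taking $x=x_n$ makes the left-hand side equal to $\frac1n\sup_x f_n(x)$ and the sum on the right equal to $\int f_m\,d\mu_n$; letting $n=n_j\to\infty$ along the convergent subsequence, and using that continuity of $f_m$ forces $\int f_m\,d\mu_{n_j}\to\int f_m\,d\mu$, we obtain $L^{\star}\le\frac1m\int f_m\,d\mu$. Since $m\ge1$ was arbitrary, $L^{\star}\le\inf_{m\ge1}\frac1m\int f_m\,d\mu=\Lambda(\mu)\le\sup_{\nu\in\mathcal{M}_T}\Lambda(\nu)$, which is the required inequality.

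The only step that requires real care is the block decomposition: one must set up the telescoping so that, after averaging over the residue $j$, every index in the bulk of the orbit segment is counted with multiplicity exactly $1$, and so that the boundary contributions collapse into a constant independent of both $x$ and $n$ for each fixed $m$. Once this uniform estimate is in hand, the remaining ingredients---weak-* compactness of the space of probability measures, the Krylov--Bogolyubov invariance argument, and the continuity of each $f_m$---are routine, as is the minor bookkeeping needed to permit the value $-\infty$.
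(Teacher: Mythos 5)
The paper does not supply a proof of Theorem \ref{th:stst}: it cites Schreiber \cite{Sc98} and Sturman--Stark \cite{StSt00} and uses the result as a black box, so there is no in-paper argument to compare your proposal against. Your proof is correct and complete, and it is essentially the standard argument from those references: the easy inequality ``$\ge$'' by pointwise domination together with Fekete's lemma, and the hard inequality by taking a Krylov--Bogolyubov accumulation point of the empirical measures along a sequence of maximising base points, combined with the block-averaging estimate that converts the subadditive cocycle into a Birkhoff sum of the single continuous function $f_m$. The quantifier order is handled correctly --- you fix the convergent subsequence and the limit measure $\mu$ once and for all, and only afterwards fix $m$, so that continuity of each $f_m$ yields $\int f_m\,d\mu_{n_j}\to\int f_m\,d\mu$ for every $m$ simultaneously --- and your count of exactly $m-1$ starting points in the bulk missed after averaging over the $m$ offsets is also correct. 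One minor misstatement that does not affect the argument: subadditivity gives only the one-sided bound $f_i(x)\le i\|f_1\|_\infty$, not $\sup_x|f_i(x)|\le i\|f_1\|_\infty$; but this is harmless, since only upper bounds on the boundary blocks $f_j$ and $f_r$ are needed, and the finitely many missing bulk terms are controlled by $\|f_m\|_\infty<\infty$, which holds because $f_m$ is continuous on the compact space $X$, as you note.
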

Applying this result to the transformation $\sigma \colon \Sigma_2 \to \Sigma_2$ and the sequence of functions $f_n(x):=\log\|L^\phi_{x_n}\cdots L^\phi_{x_1}\|$ we find that 
\[\log\varrho\left(L_0^\phi,L_1^\phi\right)= \sup_{\mu \in\mathcal{M}_\sigma} \Lambda(\phi,\mu)\]
for every $\phi \in \ell_\infty(\mathbb{Z}_2\times\mathbb{Z})$. For each $\phi \in\ell_\infty(\mathbb{Z}_2\times\mathbb{Z})$ let us therefore define
\[\mathcal{M}_{\max}(\phi):=\left\{\mu \in \mathcal{M}_\sigma \colon \Lambda(\mu,\phi)=\log\varrho\left(L_0^\phi,L_1^\phi\right)\right\};\]
this coincides with the set of maximising measures of $(L_0^\phi,L_1^\phi)$ in the sense of the previous section. It follows from \cite[Proposition A.5]{Mo13} that for every $\phi \in \ell_\infty(\mathbb{Z}_2\times\mathbb{Z})$, the set $\mathcal{M}_{\max}(\phi)$ is a nonempty compact convex subset of $\mathcal{M}_\sigma$ which is equal to the closed convex hull of $\mathcal{M}_{\max}(\phi)\cap \mathcal{E}_\sigma$.

In this article we will find it convenient to additionally  consider a metric on the set $\mathcal{M}_\sigma$ which generates a much stronger topology, namely the Ornstein $\overline{d}$-metric. Let $\pi_1,\pi_2 \colon \Sigma_2 \times \Sigma_2 \to \Sigma_2$ be given by projection onto the first and second co-ordinates respectively, and recall that the push-forward measures $(\pi_1)_*m, (\pi_2)_*m$ of a measure $m$ on $\Sigma_2\times \Sigma_2$ are the measures on $\Sigma_2$ defined by $(\pi_i)_*(A):= m(\pi_i^{-1}A)$ for $i=1,2$ and Borel $A\subseteq \Sigma_2$. Define a function $\mathfrak{d}\colon \Sigma_2 \times \Sigma_2 \to \mathbb{R}$ by $\mathfrak{d}(x,y)=0$ if $x_0=y_0$, and $\mathfrak{d}(x,y)=1$ if $x_0 \neq y_0$. Given two measures $\mu_1,\mu_2 \in \mathcal{M}_\sigma$ we say that a measure $m$ on $\Sigma_2 \times \Sigma_2$ is a \emph{joining} of $\mu_1$ with $\mu_2$ if it is $(\sigma\times\sigma)$-invariant and satisfies $(\pi_1)_*m=\mu_1$ and $(\pi_2)_*m=\mu_2$, and we write $\mathcal{J}(\mu_1,\mu_2)$ for the set of all such measures $m$.  Given $\mu_1,\mu_2 \in \mathcal{M}_\sigma$ we then define
\[\overline{d}(\mu_1,\mu_2):=\inf_{m \in \mathcal{J}(\mu_1,\mu_2)} \int \mathfrak{d}\,dm.\]
The function $\overline{d}$ defines a metric on $\mathcal{E}_\sigma$ with respect to which $\mathcal{E}_\sigma$ is complete but not separable (see e.g. \cite[\S I.9]{Sh96}, \cite[\S15.7]{Gl03}). The topology generated by $\overline{d}$ refines the weak-* topology: weak-* closed subsets of $\mathcal{E}_\sigma$ are also closed with respect to $\overline{d}$.

\section{Some fundamental observations}\label{se:foundation}

In this section we prove some simple observations on the objects $L^\phi_i$ and $\mathcal{M}_{\max}(\phi)$ defined above which will be useful in the remainder of the article.
\begin{lemma}\label{le:simples}
Let $\phi_1,\phi_2 \in \ell_\infty(\mathbb{Z}_2\times\mathbb{Z})$. Then
\[\Lambda(\phi_1+\phi_2,\mu)\leq \Lambda(\phi_1,\mu)+\Lambda(\phi_2,\mu)\]
for every $\mu\in\mathcal{M}_\sigma$.
\end{lemma}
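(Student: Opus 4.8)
The plan is to unwind the definition of $\Lambda$ and exploit the subadditivity of the supremum-over-shifts inside the integral. Recall that by \eqref{eq:common},
\[
\Lambda(\phi,\mu)=\inf_{n\geq 1}\frac1n\int \sup_{k\in\mathbb Z}\sum_{i=0}^{n-1}\phi(x_i,i+k)\,d\mu(x).
\]
So first I would fix $n\geq 1$ and $x\in\Sigma_2$, and for any single index $k\in\mathbb Z$ estimate
\[
\sum_{i=0}^{n-1}\bigl(\phi_1+\phi_2\bigr)(x_i,i+k)=\sum_{i=0}^{n-1}\phi_1(x_i,i+k)+\sum_{i=0}^{n-1}\phi_2(x_i,i+k)\leq \sup_{k'}\sum_{i=0}^{n-1}\phi_1(x_i,i+k')+\sup_{k''}\sum_{i=0}^{n-1}\phi_2(x_i,i+k'').
\]
Taking the supremum over $k$ on the left gives, pointwise in $x$,
\[
\sup_{k\in\mathbb Z}\sum_{i=0}^{n-1}(\phi_1+\phi_2)(x_i,i+k)\leq \sup_{k\in\mathbb Z}\sum_{i=0}^{n-1}\phi_1(x_i,i+k)+\sup_{k\in\mathbb Z}\sum_{i=0}^{n-1}\phi_2(x_i,i+k).
\]
(Equivalently, this is just submultiplicativity of the operator norm: $\|L^{\phi_1+\phi_2}_{x_n}\cdots L^{\phi_1+\phi_2}_{x_1}\|=\|(L^{\phi_1}_{x_n}L^{\phi_2}_{x_n})\cdots(L^{\phi_1}_{x_1}L^{\phi_2}_{x_1})\|$, but I find it cleaner to note that the operators $L^{\phi_1}_a$ and $L^{\phi_2}_a$ do not commute in general, so one should either reorder carefully or argue directly at the level of the weight sums as above.)

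Next I would integrate this pointwise inequality against $\mu$ and divide by $n$, obtaining
\[
\frac1n\int\sup_{k}\sum_{i=0}^{n-1}(\phi_1+\phi_2)(x_i,i+k)\,d\mu(x)\;\leq\;\frac1n\int\sup_{k}\sum_{i=0}^{n-1}\phi_1(x_i,i+k)\,d\mu(x)+\frac1n\int\sup_{k}\sum_{i=0}^{n-1}\phi_2(x_i,i+k)\,d\mu(x).
\]
Since the limit defining $\Lambda(\phi_j,\mu)$ exists (it equals the infimum over $n$), the right-hand side converges to $\Lambda(\phi_1,\mu)+\Lambda(\phi_2,\mu)$ as $n\to\infty$; the left-hand side converges to $\Lambda(\phi_1+\phi_2,\mu)$ for the same reason. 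Passing to the limit yields the claimed inequality for every $\mu\in\mathcal M_\sigma$.

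I do not anticipate a genuine obstacle here: the only point requiring a moment's care is the non-commutativity remark, i.e.\ making sure the pointwise submultiplicativity is phrased at the level of the weight sums $\sum_i\phi_j(x_i,i+k)$ rather than naively as a product of the operators $L^{\phi_j}_a$ in the wrong order. Everything else is the standard ``$\limsup$ of a sum is at most the sum of $\limsup$s'' argument, simplified by the fact that the relevant limits exist. One should also confirm measurability of $x\mapsto\sup_k\sum_{i=0}^{n-1}\phi(x_i,i+k)$, but this is immediate since it depends only on the finitely many coordinates $x_0,\dots,x_{n-1}$ and hence is a bounded function constant on cylinders of length $n$.
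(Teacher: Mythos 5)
Your main argument is correct and matches the paper's proof essentially verbatim: the pointwise subadditivity of $\sup_k\sum_{i=0}^{n-1}\phi(x_i,i+k)$, followed by integration, division by $n$, and passage to the limit using the fact that each limit exists (as an infimum). One caution about the parenthetical aside: the claimed equality $\|L^{\phi_1+\phi_2}_{x_n}\cdots L^{\phi_1+\phi_2}_{x_1}\|=\|(L^{\phi_1}_{x_n}L^{\phi_2}_{x_n})\cdots(L^{\phi_1}_{x_1}L^{\phi_2}_{x_1})\|$ is actually false, not merely awkward due to non-commutativity -- the operator $L^{\phi_1}_a L^{\phi_2}_a$ shifts by two positions (sending $e_i$ to $e^{\phi_2(a,i)+\phi_1(a,i+1)}e_{i+2}$) and is not the weighted shift $L^{\phi_1+\phi_2}_a$, so the two sides are norms of entirely different operators. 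Since you explicitly discard this route and argue directly at the level of weight sums, your proof stands as written.
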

\begin{proof}
For each $x \in \Sigma_2$, $n \geq 1$ and $\ell \in \mathbb{Z}$ we have
\begin{align*}\sum_{i=0}^{n-1}(\phi_1+\phi_2)(x_i,i+\ell) &=\sum_{i=0}^{n-1}\phi_1(x_i,i+\ell) +\sum_{i=0}^{n-1}\phi_2(x_i,i+\ell)  \\
&\leq \sup_{k \in \mathbb{Z}}\sum_{i=0}^{n-1}\phi_1(x_i,i+k)+ \sup_{k \in \mathbb{Z}}\sum_{i=0}^{n-1}\phi_2(x_i,i+k),\end{align*}
so taking the supremum over $\ell \in \mathbb{Z}$ we obtain
\[\sup_{k \in \mathbb{Z}}\sum_{i=0}^{n-1}(\phi_1+\phi_2)(x_i,i+k)\leq \sup_{k \in \mathbb{Z}}\sum_{i=0}^{n-1}\phi_1(x_i,i+k)+ \sup_{k \in \mathbb{Z}}\sum_{i=0}^{n-1}\phi_2(x_i,i+k)\]
and thus
\[\log\left\|L^{\phi_1+\phi_2}_{x_n}\cdots L^{\phi_1+\phi_2}_{x_1}\right\| \leq \log\left\|L^{\phi_1}_{x_n}\cdots L^{\phi_1}_{x_1}\right\| +\log\left\|L^{\phi_2}_{x_n}\cdots L^{\phi_2}_{x_1}\right\|\]
for every $x \in \Sigma_2$ and $n \geq 1$. Integrating over $x$ with respect to $\mu$, dividing by $n$ and taking the limit as $n\to\infty$ yields the result.
\end{proof}
\begin{lemma}\label{le:cty}
The function $ \ell_\infty(\mathbb{Z}_2\times\mathbb{Z}) \to \mathbb{R}$ defined by $\phi \mapsto \log\varrho(L_0^\phi,L_1^\phi)$ is $1$-Lipschitz continuous.
\end{lemma}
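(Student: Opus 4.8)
The plan is to read the Lipschitz bound off directly from the variational formula
\[\log\varrho\left(L_0^\phi,L_1^\phi\right)=\lim_{n\to\infty}\frac1n\sup_{x\in\Sigma_2}\sup_{k\in\mathbb{Z}}\sum_{i=0}^{n-1}\phi(x_i,i+k)\]
recorded in \S\ref{se:part2}, the point being that an additive perturbation of $\phi$ by a function with small sup-norm perturbs each finite average $\frac1n\sum_{i=0}^{n-1}\phi(x_i,i+k)$ by a correspondingly small amount, uniformly in $x$, $k$ and $n$.

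Concretely, I would fix $\phi,\psi\in\ell_\infty(\mathbb{Z}_2\times\mathbb{Z})$ and observe that for every $x\in\Sigma_2$, every $n\geq1$ and every $k\in\mathbb{Z}$,
\[\sum_{i=0}^{n-1}\phi(x_i,i+k)=\sum_{i=0}^{n-1}\psi(x_i,i+k)+\sum_{i=0}^{n-1}(\phi-\psi)(x_i,i+k)\leq\sum_{i=0}^{n-1}\psi(x_i,i+k)+n\|\phi-\psi\|_\infty,\]
since each of the $n$ summands $(\phi-\psi)(x_i,i+k)$ is bounded above by $\|\phi-\psi\|_\infty$. Taking first the supremum over $k\in\mathbb{Z}$ and then over $x\in\Sigma_2$ on both sides — neither of which affects the constant term $n\|\phi-\psi\|_\infty$ — then dividing by $n$ and letting $n\to\infty$, I obtain
\[\log\varrho\left(L_0^\phi,L_1^\phi\right)\leq\log\varrho\left(L_0^\psi,L_1^\psi\right)+\|\phi-\psi\|_\infty.\]
Interchanging the roles of $\phi$ and $\psi$ yields the reverse inequality, hence $\bigl|\log\varrho(L_0^\phi,L_1^\phi)-\log\varrho(L_0^\psi,L_1^\psi)\bigr|\leq\|\phi-\psi\|_\infty$, which is the assertion. (An essentially equivalent route is to combine Lemma \ref{le:simples} with the identity $\log\varrho(L_0^\phi,L_1^\phi)=\sup_{\mu\in\mathcal{M}_\sigma}\Lambda(\phi,\mu)$ and the trivial estimate $\Lambda(\chi,\mu)\leq\|\chi\|_\infty$ valid for all $\chi$ and $\mu$; but the direct computation above has the merit of not invoking Theorem \ref{th:stst}.)

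There is no substantive obstacle here: the only step meriting a word of care is the commutation of the two suprema with the additive error term, which rests on the elementary facts that $\sup(f+c)=c+\sup f$ for a constant $c$ and $\sup(f+g)\leq\sup f+\sup g$; and since the estimate is uniform in $n$, passing to the limit is immediate.
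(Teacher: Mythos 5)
Your argument is correct and it is genuinely more elementary than the paper's own proof. The paper establishes the Lipschitz bound by choosing a maximising measure $\mu\in\mathcal{M}_{\max}(\phi_1)$ (whose existence is supplied by the results recalled from \cite{Mo13}), applying the subadditivity estimate of Lemma~\ref{le:simples} to get $\Lambda(\phi_1,\mu)\leq\Lambda(\phi_2,\mu)+\Lambda(\phi_1-\phi_2,\mu)$, and then using the trivial bound $\Lambda(\phi_1-\phi_2,\mu)\leq|\phi_1-\phi_2|_\infty$ together with $\Lambda(\phi_2,\mu)\leq\log\varrho(L_0^{\phi_2},L_1^{\phi_2})$. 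Your primary route instead performs the sup-norm estimate at the level of the finite sums $\sum_{i=0}^{n-1}\phi(x_i,i+k)$ before any suprema or limits are taken, and so needs nothing beyond the formula $\log\varrho(L_0^\phi,L_1^\phi)=\lim_n\frac1n\sup_{x,k}\sum_{i=0}^{n-1}\phi(x_i,i+k)$, which follows directly from the definition of the joint spectral radius and equation~\eqref{eq:common}. In particular you avoid invoking the existence of a maximising measure, Theorem~\ref{th:stst}, and Lemma~\ref{le:simples} entirely. The trade-off is small: the paper's version reuses the machinery it has already set up and keeps the argument at the level of $\Lambda$, while yours is self-contained at the level of finite words; both are perfectly valid, and your parenthetical alternative route is essentially the paper's.
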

\begin{proof}
By symmetry it is enough to show that for all $\phi_1,\phi_2 \in \ell_\infty(\mathbb{Z}_2\times\mathbb{Z})$ we have \[\log\varrho(L_0^{\phi_1},L_1^{\phi_1})\leq \log\varrho(L_0^{\phi_2},L_1^{\phi_2})+|\phi_1-\phi_2|_\infty.\]
Let $\mu \in \mathcal{M}_{\max}(\phi_1)$. The inequality $\Lambda(\phi,\mu)\leq |\phi|_\infty$ is obvious. Combining this with Lemma \ref{le:simples} we have
\begin{align*}\log \varrho\left(L_0^{\phi_1},L_1^{\phi_1}\right) &= \Lambda(\phi_1,\mu) \leq \Lambda(\phi_2,\mu)+\Lambda(\phi_1-\phi_2,\mu)\\
&\leq \Lambda(\phi_2,\mu) + |\phi_1-\phi_2|_\infty\\
&\leq \log\varrho\left(L_0^{\phi_2},L_1^{\phi_2}\right)+|\phi_1-\phi_2|_\infty\end{align*}
as required.
\end{proof}
The following lemma is analogous to \cite[Lemma 6.3]{Mo13} in the matrix case:
\begin{lemma}\label{kuratowski}
Suppose that $(\phi_n)$ is a sequence of elements of $\ell_\infty(\mathbb{Z}_2 \times \mathbb{Z})$ which converges to a limit $\phi$. If $(\mu_n)$ is a sequence of measures on $\Sigma_2$ such that $\mu_n \in \mathcal{M}_{\max}(\phi_n)$ for every $n \geq 1$, and $(\mu_n)$ converges in the weak-* topology to a limit $\mu \in \mathcal{M}_\sigma$, then $\mu \in \mathcal{M}_{\max}(\phi)$.
\end{lemma}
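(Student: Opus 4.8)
The plan is to prove the equivalent statement $\Lambda(\phi,\mu)=\log\varrho(L_0^\phi,L_1^\phi)$. Since the inequality $\Lambda(\phi,\mu)\le \sup_{\nu\in\mathcal{M}_\sigma}\Lambda(\phi,\nu)=\log\varrho(L_0^\phi,L_1^\phi)$ holds for every invariant measure (this is the identity recorded just before the definition of $\mathcal{M}_{\max}$), only the reverse inequality $\Lambda(\phi,\mu)\ge\log\varrho(L_0^\phi,L_1^\phi)$ requires work, and I would obtain it by combining an upper-semicontinuity property of $\Lambda(\phi,\cdot)$ with the uniform Lipschitz comparison furnished by Lemma \ref{le:simples}.

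The first ingredient is that $\nu\mapsto\Lambda(\phi,\nu)$ is upper semi-continuous on $\mathcal{M}_\sigma$ in the weak-* topology. For each fixed $n\ge1$ the function $g_n^\phi(x):=\sup_{k\in\mathbb{Z}}\sum_{i=0}^{n-1}\phi(x_i,i+k)=\log\|L^\phi_{x_n}\cdots L^\phi_{x_1}\|$ depends only on the coordinates $x_0,\dots,x_{n-1}$ of $x$ (each function in the supremum over $k$ has this property, hence so does the supremum), so $g_n^\phi$ is locally constant and therefore continuous on $\Sigma_2$. Consequently $\nu\mapsto\frac1n\int g_n^\phi\,d\nu$ is weak-* continuous, and $\Lambda(\phi,\nu)=\inf_{n\ge1}\frac1n\int g_n^\phi\,d\nu$ is an infimum of weak-* continuous functions, hence weak-* upper semi-continuous. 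In particular, since $\mu_n\to\mu$ weak-*, we get $\Lambda(\phi,\mu)\ge\limsup_{n\to\infty}\Lambda(\phi,\mu_n)$.

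The second ingredient is the estimate $|\Lambda(\phi,\nu)-\Lambda(\phi_n,\nu)|\le|\phi-\phi_n|_\infty$ valid for every $\nu\in\mathcal{M}_\sigma$, obtained by applying Lemma \ref{le:simples} twice (as in the proof of Lemma \ref{le:cty}) together with the trivial bound $\Lambda(\psi,\nu)\le|\psi|_\infty$. Taking $\nu=\mu_n$ and using $\mu_n\in\mathcal{M}_{\max}(\phi_n)$, i.e. $\Lambda(\phi_n,\mu_n)=\log\varrho(L_0^{\phi_n},L_1^{\phi_n})$, gives $\Lambda(\phi,\mu_n)\ge\log\varrho(L_0^{\phi_n},L_1^{\phi_n})-|\phi-\phi_n|_\infty$. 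Since $\phi_n\to\phi$ and $\psi\mapsto\log\varrho(L_0^\psi,L_1^\psi)$ is continuous by Lemma \ref{le:cty}, the right-hand side tends to $\log\varrho(L_0^\phi,L_1^\phi)$, so $\limsup_n\Lambda(\phi,\mu_n)\ge\log\varrho(L_0^\phi,L_1^\phi)$. Chaining this with the first ingredient yields $\Lambda(\phi,\mu)\ge\log\varrho(L_0^\phi,L_1^\phi)$, and with the trivial reverse inequality we conclude $\mu\in\mathcal{M}_{\max}(\phi)$.

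I do not expect a serious obstacle here: the only point needing care is the verification that each $g_n^\phi$ is continuous (equivalently, that $\Lambda(\phi,\cdot)$ is weak-* upper semi-continuous), and this follows immediately from the finite-dependence observation above; everything else is routine bookkeeping with Lemma \ref{le:simples} and Lemma \ref{le:cty}. The one thing to be mildly careful about is that upper semi-continuity, not continuity, is all that is available and all that is needed, so the argument must be arranged so that only the inequality $\Lambda(\phi,\mu)\ge\limsup_n\Lambda(\phi,\mu_n)$ is used.
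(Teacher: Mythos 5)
Your proof is correct. It uses the same three ingredients as the paper's own argument --- the finite-range continuity of each level-$n$ function $g_n^\phi$, the $1$-Lipschitz dependence of $\Lambda$ on $\phi$ coming from Lemma \ref{le:simples}, and the continuity of $\log\varrho$ from Lemma \ref{le:cty} --- but you package them a little differently. The paper works at a fixed finite level $m$ throughout: it shows $\frac1m\int f_m^{\phi_n}\,d\mu_n \to \frac1m\int f_m^\phi\,d\mu$ (handling the $\mu_n\to\mu$ and the $\phi_n\to\phi$ in one combined estimate), chains this through $\frac1m\int f_m^{\phi_n}\,d\mu_n \ge \Lambda(\phi_n,\mu_n) = \log\varrho(L_0^{\phi_n},L_1^{\phi_n})$, and only takes $\inf_m$ at the very end. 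You instead factor the argument into two separate, more modular steps: the general fact that $\Lambda(\phi,\cdot)$, being an infimum of weak-* continuous functionals, is weak-* upper semi-continuous (this isolates the $\mu_n\to\mu$ passage), followed by the Lipschitz comparison $\Lambda(\phi,\mu_n)\ge\Lambda(\phi_n,\mu_n)-|\phi-\phi_n|_\infty$ (this isolates the $\phi_n\to\phi$ passage). The two arrangements are logically equivalent and of comparable length; yours has the advantage that the upper semi-continuity of $\Lambda(\phi,\cdot)$ is stated as a reusable fact in its own right, while the paper's version avoids naming it but does the same work inline.
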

\begin{proof}
For each $m \geq 1$, $\psi\in\ell_\infty(\mathbb{Z}_2\times\mathbb{Z})$ and $x \in \Sigma_2$, let us for notational simplicity define
\[f_m^\psi(x):=\sup_{k \in \mathbb{Z}} \sum_{i=0}^{m-1} \psi(x_i,k+i).\]
It is easily verified that $|f^{\phi_n}_m-f^\phi_m|_\infty\leq m|\phi_n-\phi|_\infty$  for every $n,m \geq 1$, and so for each $m \geq 1$ we have
\begin{align*}\left|\int f_m^\phi\,d\mu - \int f_m^{\phi_n} d\mu_n\right| &\leq \left|\int f_m^\phi\,d\mu - \int f_m^\phi d\mu_n\right|+\left|\int f_m^\phi\,d\mu_n - \int f_m^{\phi_n} d\mu_n\right|\\
&\leq  \left|\int f_m^\phi\,d\mu - \int f_m^\phi d\mu_n\right|+m|\phi-\phi_n|_\infty\end{align*}
for all $n \geq 1$. Each $f_m^\phi \colon \Sigma_2 \to \mathbb{R}$ is continuous since $f^\phi_m$ depends only on the symbols $x_0,\ldots,x_{m-1}$ of $x \in \Sigma_2$. Since $\phi_n \to \phi$ uniformly and $\mu_n \to \mu$ in the weak-* topology  it follows that for each fixed $m$
\[\lim_{n \to \infty} \frac{1}{m}\int f_m^{\phi_n} d\mu_n=\frac{1}{m}\int f_m^\phi\,d\mu.\]
Using the fact that $\mu_n \in \mathcal{M}_{\max}(\phi_n)$ for every $n \geq 1$ together with the result of Lemma \ref{le:cty} we deduce that the inequality
\begin{align*}\frac{1}{m}\int f_m^\phi\,d\mu &= \lim_{n \to \infty} \frac{1}{m}\int f_m^{\phi_n} d\mu_n\\
& \geq \liminf_{n \to \infty} \inf_{k \geq 1}\frac{1}{k}\int f_k^{\phi_n}\,d\mu_n\\
&=\liminf_{n \to \infty} \Lambda(\phi_n,\mu_n)\\
&=\lim_{n \to \infty} \log\varrho\left(L_0^{\phi_n},L_1^{\phi_n}\right)\\
&=\log\varrho\left(L_0^{\phi},L_1^{\phi}\right)\end{align*}
holds for every integer $m \geq 1$. We conclude that
\[\log\varrho\left(L_0^{\phi},L_1^{\phi}\right) \geq \Lambda(\phi,\mu)=\inf_{m \geq 1}\frac{1}{m}\int f_m^\phi\,d\mu \geq \log\varrho\left(L_0^{\phi},L_1^{\phi}\right)\]
and therefore $\mu \in \mathcal{M}_{\max}(\phi)$ as claimed.
\end{proof}

\section{Proof of Theorem \ref{th:strictly}}\label{se:strictly}

We will deduce Theorem \ref{th:strictly} from the following somewhat stronger result:
\begin{theorem}\label{th:tech-strictly}
Let $Z\subseteq \Sigma_2$ be a nonempty compact $\sigma$-invariant set, and $z \in Z$ a point such that $\{\sigma^nz \colon n \in\mathbb{Z}\}$ is dense in $Z$. Suppose furthermore that $\phi \in \ell_\infty(\mathbb{Z}_2 \times \mathbb{Z})$ has the following property: for some real number $\delta>0$, we have $\phi(z_i,i)\geq \phi(1-z_i,i)+\delta$ for every $i \in \mathbb{Z}$. Then for every $\mu \in\mathcal{E}_\sigma$ we have
\[\Lambda(\phi,\mu) \leq \log\varrho\left(L_0^{\phi},L_1^{\phi}\right) -\delta \inf\{\overline{d}(\mu,\nu) \colon \nu\in\mathcal{M}_\sigma\text{ and }\nu(Z)=1\}.\]
In the special case where $\phi(z_i,i)=1$, $\phi(1-z_i,i)=0$ for all $i \in\mathbb{Z}$, then for every $\mu \in \mathcal{E}_\sigma$ we  have the exact formula
\[\Lambda(\phi,\mu) = 1 - \inf\{\overline{d}(\mu,\nu) \colon \nu\in\mathcal{E}_\sigma\text{ and }\nu(Z)=1\}.\]
\end{theorem}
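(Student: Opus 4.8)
The plan is to reduce the statement to a computation with Hamming distances. For $x\in\Sigma_2$, $n\ge1$ and $k\in\mathbb{Z}$ write $N_k(n,x):=\#\{0\le i<n:x_i\ne z_{i+k}\}$, the Hamming distance between the word $x_0\cdots x_{n-1}$ and the length-$n$ window $z_k\cdots z_{k+n-1}$ of $z$. The hypothesis $\phi(z_i,i)\ge\phi(1-z_i,i)+\delta$ says precisely that $z_i$ attains $\max_{a\in\{0,1\}}\phi(a,i)$ and that $\phi(a,i)\le\phi(z_i,i)-\delta$ whenever $a\ne z_i$. Summing over $i$ and taking suprema over the shift gives, for all $x$ and $n$,
\[\sup_{k\in\mathbb{Z}}\sum_{i=0}^{n-1}\phi(x_i,i+k)\ \le\ \left(\sup_{k\in\mathbb{Z}}\sum_{j=k}^{k+n-1}\phi(z_j,j)\right)-\delta\inf_{k\in\mathbb{Z}}N_k(n,x).\]
Because $z_j$ maximises $\phi(\cdot,j)$, the first term on the right equals $\sup_{x\in\Sigma_2}\sup_k\sum_{i=0}^{n-1}\phi(x_i,i+k)$ (the bound ``$\le$'' is termwise; for ``$\ge$'' take $x$ to agree on $[0,n)$ with a shift of $z$), so divided by $n$ it tends to $\log\varrho(L_0^\phi,L_1^\phi)$; in the special case each summand is $1$ and this limit equals $1$. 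Letting $n\to\infty$ for $\mu$-almost every $x$ and using the subadditive ergodic theorem on the left yields
\[\Lambda(\phi,\mu)\ \le\ \log\varrho\left(L_0^\phi,L_1^\phi\right)-\delta\,c(x),\qquad c(x):=\liminf_{n\to\infty}\frac1n\inf_{k\in\mathbb{Z}}N_k(n,x),\]
and in the special case the liminf is a genuine limit, equal $\mu$-a.e.\ to $1-\Lambda(\phi,\mu)$.

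I would then show that for $\mu$-generic $x$ one has $c(x)\ge\inf\{\overline{d}(\mu,\nu):\nu\in\mathcal{E}_\sigma,\ \nu(Z)=1\}$; since $\mathcal{E}_\sigma\subseteq\mathcal{M}_\sigma$ this already gives the asserted inequality $\Lambda(\phi,\mu)\le\log\varrho(L_0^\phi,L_1^\phi)-\delta\inf\{\overline{d}(\mu,\nu):\nu\in\mathcal{M}_\sigma,\,\nu(Z)=1\}$, and in the special case it supplies the ``$\ge$'' half of the exact formula. Fix a $\mu$-generic $x$ and a subsequence $(n_j)$ with $\frac1{n_j}\inf_k N_k(n_j,x)\to c(x)$, and for each $j$ pick $k_j$ attaining this infimum. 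Form the empirical measures $m_j:=\frac1{n_j}\sum_{i=0}^{n_j-1}\delta_{(\sigma^ix,\sigma^{i+k_j}z)}$ on $\Sigma_2\times\Sigma_2$. These are asymptotically $(\sigma\times\sigma)$-invariant, so by compactness a weak-* limit point $m$ is $(\sigma\times\sigma)$-invariant; its first marginal is $\mu$ (as $x$ is $\mu$-generic), its second marginal is carried by the orbit closure of $z$ and hence by the closed set $Z$, and $\int\mathfrak{d}\,dm=\lim_j\frac1{n_j}N_{k_j}(n_j,x)=c(x)$. Thus $m$ is a joining of $\mu$ with $\nu:=(\pi_2)_*m$, $\nu(Z)=1$, and $\overline{d}(\mu,\nu)\le c(x)$. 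To make $\nu$ ergodic, take an ergodic decomposition $m=\int m_\eta\,d\eta$: because $\mu$ is ergodic and $\pi_1$ is a factor map, $(\pi_1)_*m_\eta=\mu$ for a.e.\ $\eta$; because $(\pi_2)_*m$ is carried by $Z$, so is $(\pi_2)_*m_\eta$ for a.e.\ $\eta$; and $\int\mathfrak{d}\,dm_\eta\le c(x)$ on a set of positive measure. Any $\eta$ in the intersection yields an ergodic joining $m_\eta$ of $\mu$ with the ergodic measure $(\pi_2)_*m_\eta$, which is supported in $Z$ and at $\overline{d}$-distance at most $c(x)$ from $\mu$.

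It remains, for the exact formula, to prove the reverse bound $c\le\overline{d}(\mu,\nu)$ for every $\nu\in\mathcal{E}_\sigma$ with $\nu(Z)=1$, where now $c=1-\Lambda(\phi,\mu)$. The same ergodic-decomposition remark shows $\overline{d}(\mu,\nu)$ equals the infimum of $\int\mathfrak{d}\,dm$ over \emph{ergodic} joinings $m$ of $\mu$ and $\nu$; fix such an $m$ with $\int\mathfrak{d}\,dm\le\overline{d}(\mu,\nu)+\varepsilon$. For $m$-almost every $(x,y)$, Birkhoff's theorem gives $\frac1n$ times the Hamming distance of $x_0\cdots x_{n-1}$ and $y_0\cdots y_{n-1}$ converging to $\int\mathfrak{d}\,dm$, while $x$ is $\mu$-generic and $y\in Z$ (as $\nu(Z)=1$). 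Since the orbit of $z$ is dense in $Z$, the word $y_0\cdots y_{n-1}$ occurs in $z$, so $\inf_k N_k(n,x)$ is at most that Hamming distance; passing to the limit gives $c\le\int\mathfrak{d}\,dm\le\overline{d}(\mu,\nu)+\varepsilon$. Letting $\varepsilon\to0$ and taking the infimum over $\nu$, and combining with the previous paragraph, gives $1-\Lambda(\phi,\mu)=\inf\{\overline{d}(\mu,\nu):\nu\in\mathcal{E}_\sigma,\,\nu(Z)=1\}$, which is the claimed formula.

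The routine ingredients are the opening inequality, the Krylov--Bogolyubov compactness argument, and the subadditive ergodic theorem; the genuinely delicate point, invoked several times, is the manipulation of ergodic decompositions of joinings --- that since $\mu$ (resp.\ $\nu$) is ergodic, almost every ergodic component of a joining of $\mu$ with another measure has $\mu$ (resp.\ $\nu$) as its first (resp.\ second) marginal, so optimal joinings may be assumed ergodic and second marginals may be forced to be ergodic and supported in $Z$ --- together with the identification of the finite words appearing in $Z$ with those appearing in the single sequence $z$, which is the only place where density of the orbit of $z$ enters.
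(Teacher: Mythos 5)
Your proof is correct and follows essentially the same route as the paper's, which packages the empirical-measure-to-joining construction into a standalone lemma (proved by exactly your two steps: Birkhoff for $\lambda_1\le\lambda_2$, then empirical measures along a subsequence for $\lambda_2\le\lambda_1$) and then applies it with $f=\mathfrak{d}$ together with the same termwise opening estimate. The one point where you are more explicit than the paper: that lemma produces the infimum of $\overline{d}(\mu,\nu)$ over all $\nu\in\mathcal{M}_\sigma$ supported on $Z$, and the passage to ergodic $\nu$ in the exact formula is left tacit there, whereas you supply the ergodic-decomposition-of-joinings argument (using ergodicity of $\mu$ to force the first marginal of almost every ergodic component to equal $\mu$, and the support condition to persist) that justifies it.
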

The usefulness to us of Theorem \ref{th:tech-strictly} can be captured in the following informal description: if the pair $L_0^\phi,L_1^\phi$ admits a ``uniformly greedy sequence'' $z \in \Sigma_2$ with the property that $\|L^\phi_{z_n}e_n\|>\|L^\phi_{1-z_n}e_n\|$ for all $n \in \mathbb{Z}$, with a uniform lower bound for the ratio of the two quantities, then all maximising measures for $\phi$ must be supported in the orbit closure of the uniformly greedy sequence $z$. Note that the removal of uniformity renders the conclusion false, for example if $L_0^\phi e_n \equiv e_{n+1}$ and $L_1^\phi e_n \equiv 2^{-2^{-|n|}}e_{n+1}$ then the constant sequence of zeros is greedy in the sense that $\|L^\phi_{z_n}e_n\|>\|L^\phi_{1-z_n}e_n\|$ for all $n \in \mathbb{Z}$, but is not \emph{uniformly} greedy in the above sense, and for this pair every measure is maximising.

When the orbit closure of the greedy sequence $z$ is very small, Theorem \ref{th:tech-strictly} conveys precise information on the maximising measures of $\phi$. In more general cases -- such as when the orbit of the greedy sequence is dense in $\Sigma_2$ -- it is unclear how maximising measures are selected, and this matter may be of interest as a topic of future research.

Theorem \ref{th:tech-strictly} may be seen as an extension of the following earlier construction due to L. Gurvits \cite[Theorem A.1]{Gu95}:
\begin{theorem}[Gurvits]
Let $z \in \Sigma_2$ be defined by $z_n:= \chi_{[0,1/2]}(n\gamma -\lfloor n\gamma\rfloor)$ where $\gamma$ is an irrational number. Let $\alpha \in (0,1)$ and define weighted shift operators $L_0$, $L_1$ by $L_0e_n:=\alpha^{1-z_n}e_{n+1}$, $L_1e_n:=\alpha^{z_n}e_{n+1}$ for all $n \in\mathbb{Z}$. Then
\begin{equation}\label{eq:gurv}\varrho(L_0,L_1)=1>\alpha^{\frac{1}{2}} = \sup_{n \geq 1} \max\left\{\rho\left(L_{x_n}\cdots L_{x_1}\right)^{\frac{1}{n}}\colon x_1,\ldots,x_n \in \{0,1\}\right\}\end{equation}
\end{theorem}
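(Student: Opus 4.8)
The plan is to prove the three assertions comprising the display separately: (I) $\varrho(L_0,L_1)=1$; (II) in fact $\rho(L_{x_n}\cdots L_{x_1})^{1/n}=\alpha^{1/2}$ for \emph{every} word $x_1,\dots,x_n\in\{0,1\}$, which yields the stated supremum; and (III) $\alpha^{1/2}<1$, which is immediate since $\alpha\in(0,1)$. Set $\phi(0,n):=(1-z_n)\log\alpha$ and $\phi(1,n):=z_n\log\alpha$, so that $L_a=L_a^\phi$ in the notation of \S\ref{se:part2}; since $\log\alpha<0$ one has the convenient identity $\phi(a,n)=(\log\alpha)\,\mathbf 1[a=z_n]$, so $\phi\le0$ everywhere and $\phi(a,n)=0$ exactly when $a\neq z_n$.

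For (I), by \eqref{eq:common} (and the normal form for products of weighted shifts recalled in the introduction), for any word $x_1,\dots,x_n$ one has $\|L_{x_n}\cdots L_{x_1}\|=\exp\bigl(\sup_{k\in\mathbb Z}\sum_{j=1}^n\phi(x_j,k+j)\bigr)$, which is $\le1$ because every summand is $\le0$; hence $\varrho(L_0,L_1)\le1$. Conversely, fix $n\ge1$ and take $x_j:=1-z_j$ for $j=1,\dots,n$; then $\sum_{j=1}^n\phi(x_j,j)=\sum_{j=1}^n(\log\alpha)\,\mathbf 1[1-z_j=z_j]=0$, so the supremum over $k$ equals $0$ and $\|L_{x_n}\cdots L_{x_1}\|=1$. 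Thus $\sup_{|x|=n}\|L_{x_n}\cdots L_{x_1}\|^{1/n}=1$ for every $n$, and $\varrho(L_0,L_1)=\inf_n(\cdots)=1$.

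For (II), fix a word $x_1,\dots,x_n$ and let $M:=L_{x_n}\cdots L_{x_1}$. A direct computation gives $Me_i=c_ie_{i+n}$ with $c_i:=\exp\bigl(\sum_{j=1}^n\phi(x_j,i+j-1)\bigr)\in[\alpha^n,1]$, so $M$ is a weighted shift by $n$, $M^me_i=\bigl(\prod_{l=0}^{m-1}c_{i+ln}\bigr)e_{i+mn}$, and $\|M^m\|=\sup_i\prod_{l=0}^{m-1}c_{i+ln}$; therefore $\log\rho(M)=\lim_m\frac1m\log\|M^m\|=\lim_m\frac1m\sup_i\sum_{l=0}^{m-1}\sum_{j=1}^n\phi(x_j,i+ln+j-1)$. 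Reindexing by $q:=ln+j-1\in\{0,\dots,mn-1\}$ and letting $\hat x$ denote the $n$-periodic extension of $(x_1,\dots,x_n)$ with $\hat x_q=x_{(q\bmod n)+1}$, the inner double sum equals $\sum_{q=0}^{mn-1}\phi(\hat x_q,i+q)=(\log\alpha)\,\#\{q\in[0,mn):\hat x_q=z_{i+q}\}$; as $\log\alpha<0$ the supremum over $i$ turns into an infimum, giving $\log\rho(M)=(\log\alpha)\lim_m\frac1m\inf_i\#\{q\in[0,mn):\hat x_q=z_{i+q}\}$. It remains to show this agreement count is $\tfrac12 mn+o(mn)$ uniformly in $i$. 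Decomposing $[0,mn)$ by residue $r=q\bmod n$, on the $r$-th residue class $\hat x$ is the constant $x_{r+1}$ while $z_{i+q}$ runs through $z$ along the arithmetic progression $i+r,\,i+r+n,\dots$, i.e.\ through the points $\{(i+r)\gamma+l(n\gamma)\}_{l=0}^{m-1}$. Since $\gamma$—hence $n\gamma$—is irrational, Weyl's theorem gives that these points equidistribute in $[0,1)$ with discrepancy $o(m)$, uniformly in the offset (the discrepancy of $(\{l\theta\})_{l<m}$ being translation-invariant up to a factor $2$); since $z_p=1$ precisely when $\{p\gamma\}\in[0,\tfrac12]$ and this interval has length $\tfrac12$, we obtain $\#\{l<m:z_{i+r+ln}=x_{r+1}\}=\tfrac m2+o(m)$ regardless of the value $x_{r+1}\in\{0,1\}$. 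Summing over the $n$ residue classes yields $\#\{q\in[0,mn):\hat x_q=z_{i+q}\}=\tfrac12 mn+o(m)$ uniformly in $i$, whence $\log\rho(M)=\tfrac n2\log\alpha$, i.e.\ $\rho(M)^{1/n}=\alpha^{1/2}$. Since this holds for every $n$ and every word, the supremum in the statement equals $\alpha^{1/2}$; combined with (III) this completes the proof.

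I expect no serious obstacle. The one point requiring care is the uniformity in $i$ (equivalently, in the base-point of the arithmetic progression) of the $o(m)$ error in the equidistribution step, which is needed because of the infimum over $i$ and which follows from the translation-invariance of the discrepancy of $(\{l\theta\})_{l<m}$ up to a factor of $2$. Everything else is bookkeeping: putting the product $L_{x_n}\cdots L_{x_1}$ into weighted-shift normal form to reduce $\rho$ to a counting problem, and the residue-class decomposition that reduces that counting problem to a single application of Weyl's theorem; note that the argument uses only equidistribution along arithmetic progressions and not any unique-ergodicity property of the orbit closure of $z$.
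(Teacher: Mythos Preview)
Your proof is correct. Part (I) is clean, and in part (II) the reduction of $\rho(L_{x_n}\cdots L_{x_1})$ via Gelfand's formula to an agreement count between the $n$-periodic word $\hat x$ and shifts of $z$, followed by the residue-class decomposition and Weyl equidistribution, is sound. The uniformity in the offset $i$ is indeed the only delicate point, and you handle it correctly: the discrepancy $D_m$ of $(\{l(n\gamma)\})_{l<m}$ over arcs is exactly translation-invariant, so the error term on each residue class is bounded by $mD_m=o(m)$ independently of $i$ and $r$; summing over $n$ residue classes gives the uniform $o(m)$ you need for the infimum.

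The paper does not actually prove this theorem; it is quoted as a result of Gurvits. What the paper does offer is a one-line summary of Gurvits' mechanism: the Sturmian sequence $z$ generates a measure $\mu$ with $\overline d(\mu,\nu)=\tfrac12$ for every periodic $\nu$, and the paper's Theorem~\ref{th:tech-strictly} (the exact-formula case) then converts this into the Lyapunov exponent. Your argument is the same computation carried out from first principles, bypassing the $\overline d$-metric and ergodic-optimisation language entirely: your ``agreement count $\approx \tfrac12 mn$ uniformly in $i$'' is precisely the statement $\overline d(\mu,\nu)=\tfrac12$ read off at the level of orbits. What your route buys is a fully self-contained proof using nothing beyond Weyl's theorem; what the paper's phrasing buys is a clear conceptual link to the general framework of Theorem~\ref{th:tech-strictly} and to the broader class of examples discussed immediately after the statement.
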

Gurvits' result illustrates a contrast with the matrix case, in which the equation
\[\varrho(\mathsf{A})= \sup_{n \geq 1} \sup\left\{\rho\left(A_n\cdots A_1\right)^{\frac{1}{n}}\colon A_1,\ldots,A_n \in \mathsf{A}\right\}\]
is well known to hold for all bounded sets of $d \times d$ matrices $\mathsf{A}$ (see e.g. \cite{BeWa92,Bo03,El95}).  Key to Gurvits' argument is the observation that the sequence $z$ defined above generates a measure $\mu$ on $\Sigma_2$ such that $\overline{d}(\mu,\nu)=\frac{1}{2}$ for every $\nu \in \mathcal{E}_\sigma$ which is supported on a periodic orbit (cf. \cite[p.102]{Sh96}), although Gurvits does not express this observation in terms of ergodic theory. Using Theorem \ref{th:tech-strictly} it is possible to produce further examples of Gurvtis' type: for example, if the orbit closure of $z$ supports a unique measure $\mu$, and this measure has positive entropy, then an identity analogous to \eqref{eq:gurv} must hold, possibly with a smaller exponent than $\frac{1}{2}$. Specifically, if $\mu$ has positive entropy and $\overline{d}(\mu,\nu)$ is sufficiently small then necessarily $h(\nu)>0$, and it follows that there exists $\kappa>0$ such that $\overline{d}(\mu,\nu)\geq \kappa$ whenever $\nu \in \mathcal{E}_\sigma$ is supported on a periodic orbit; this leads to \eqref{eq:gurv} with $\alpha^\kappa$ in place of $\alpha^{1/2}$.

Before giving the proof of Theorem \ref{th:tech-strictly}, let us first verify that it implies Theorem \ref{th:strictly}:
\begin{proof}[Proof of Theorem \ref{th:strictly} conditional on Theorem \ref{th:tech-strictly}]:
Let $\mu \in \mathcal{E}_\sigma$ and suppose that the restriction of $\sigma$ to $Z:=\supp \mu$ is uniquely ergodic. Let $z \in Z$ be arbitrary; it is well-known that $\{\sigma^n z\colon n \in\mathbb{Z}\}$ is necessarily dense in $Z$. Define  $\phi\in\ell_\infty(\mathbb{Z}_2\times\mathbb{Z})$ by $\phi(z_i,i):=1$, $\phi(1-z_i,i):=0$ for every $i \in \mathbb{Z}$, and let $(L_0^\phi,L_1^\phi)\in\mathcal{W}$ be the pair of operators associated to the function $\phi$. 

Since $\mathcal{W}$ is homeomorphic to $\ell_\infty(\mathbb{Z}_2\times\mathbb{Z})$  we may choose an open neighbourhood $\mathcal{U}$ of $(L_0^\phi,L_1^\phi)$ such that every pair in $\mathcal{U}$ has the form $L_0^{\hat\phi},L_1^{\hat\phi}$ where $|\hat\phi-\phi|_\infty<\frac{1}{3}$. For such a pair we have $ \hat\phi(z_i,i)\geq \hat\phi(1-z_i,i)+1/3$ for all $i \in \mathbb{Z}$. Since $\mu$ is the unique element of $\mathcal{E}_\sigma$ which is supported on $Z$, Theorem \ref{th:tech-strictly} implies that $\Lambda(\hat\phi,\nu)<\log\varrho(L_0^{\hat\phi},L_1^{\hat\phi})$ for every $\nu \in \mathcal{E}_\sigma $ which is not equal to $\mu$. It was noted in \S\ref{se:part2} that the nonempty set $\mathcal{M}_{\max}(\hat\phi)$ is precisely the closed convex hull of $\mathcal{M}_{\max}(\hat\phi)\cap\mathcal{E}_\sigma$, and since the former set is nonempty, so the latter set is nonempty also; but that set contain any elements which are unequal to $\mu$, so we have $\mathcal{M}_{\max}(\hat\phi)\cap\mathcal{E}_\sigma = \{\mu\}$ by elimination and therefore $\mathcal{M}_{\max}(\hat\phi)=\{\mu\}$ for every $(L_0^{\hat\phi},L_1^{\hat\phi})\in\mathcal{U}$. This completes the proof. \end{proof}

Let $\mathcal{M}_{\sigma\times\sigma}$ denote the set of all Borel probability measures on $\Sigma_2\times\Sigma_2$ which are invariant with respect to the transformation $\sigma \times \sigma$. Clearly this set is nonempty. To prove Theorem \ref{th:tech-strictly} we require a lemma:
\begin{lemma}\label{le:dbar-handy}
Let $Z \subseteq \Sigma_2$ be a nonempty compact $\sigma$-invariant set, let $f \colon \Sigma_2 \times \Sigma_2 \to \mathbb{R}$ be continuous, and let $\mu \in \mathcal{E}_\sigma$. Then for $\mu$-a.e. $x \in \Sigma_2$,
\begin{eqnarray*}
\lefteqn{\lim_{n \to \infty}\inf_{y \in Z} \frac{1}{n}\sum_{i=0}^{n-1} f(\sigma^ix,\sigma^iy)}& & \\
& =& \inf\left\{\int f\,dm \colon m \in \mathcal{M}_{\sigma \times \sigma},\,(\pi_1)_*m=\mu,\text{ and }((\pi_2)_*m)(Z)=1\right\}.\end{eqnarray*}
\end{lemma}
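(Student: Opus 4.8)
The plan is to prove the identity by establishing the two inequalities separately, using a compactness argument together with the subadditive/ergodic structure of the problem. Write $I_n(x) := \inf_{y \in Z} \frac{1}{n}\sum_{i=0}^{n-1} f(\sigma^i x, \sigma^i y)$ and let $R$ denote the right-hand side of the claimed identity, i.e. the infimum over joinings $m$ with first marginal $\mu$ and second marginal supported on $Z$.

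\textbf{Step 1: the limit exists almost everywhere.} First I would observe that the sequence $n \mapsto n I_n(x)$ is \emph{superadditive} up to an error: for $n, \ell \geq 1$ and any $y \in Z$,
\[
\sum_{i=0}^{n+\ell-1} f(\sigma^i x, \sigma^i y) = \sum_{i=0}^{n-1} f(\sigma^i x, \sigma^i y) + \sum_{i=0}^{\ell-1} f(\sigma^{n+i}x, \sigma^{n+i}y),
\]
and since $\sigma^n y \in Z$ (as $Z$ is $\sigma$-invariant), taking the infimum over $y \in Z$ of the left side is at least $n I_n(x) + \ell I_\ell(\sigma^n x)$. So $g_n(x) := -n I_n(x)$ is a subadditive cocycle over $\sigma$ with respect to $\mu$, each $g_n$ bounded and continuous (continuity of $I_n$ follows since $f$ is uniformly continuous and $Z$ is compact). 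By Kingman's subadditive ergodic theorem and ergodicity of $\mu$, $\lim_n I_n(x)$ exists and equals a constant, call it $L$, for $\mu$-a.e.\ $x$, and moreover $L = \lim_n \frac{1}{n}\int (\inf_{y \in Z} \sum_{i=0}^{n-1} f(\sigma^i x,\sigma^i y))\,d\mu(x) = \sup_n (\text{that average})$ — note the infimum-over-$y$ sits \emph{inside} the integral, which will matter below.

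\textbf{Step 2: $L \leq R$.} Let $m$ be any joining in the admissible set. Since $(\pi_2)_* m$ is concentrated on $Z$ and $Z$ is closed, for $m$-a.e.\ $(x,y)$ we have $y \in Z$; hence $I_n(x) \leq \frac{1}{n}\sum_{i=0}^{n-1} f(\sigma^i x, \sigma^i y)$ pointwise $m$-a.e. Integrating against $m$, using $(\sigma\times\sigma)$-invariance of $m$ to see $\int \frac{1}{n}\sum_{i=0}^{n-1} f(\sigma^i x,\sigma^i y)\,dm = \int f\,dm$, and using that $(\pi_1)_* m = \mu$ so $\int I_n(x)\,dm = \int I_n\,d\mu$, gives $\int I_n\,d\mu \leq \int f\,dm$ for every $n$. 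Letting $n \to \infty$, $L \leq \int f\,dm$; taking the infimum over $m$ yields $L \leq R$.

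\textbf{Step 3: $L \geq R$, the main obstacle.} This is where the real work is and it is the step I expect to be hardest. For $\mu$-a.e.\ $x$, pick for each $n$ a near-optimal $y^{(n)} = y^{(n)}(x) \in Z$ with $\frac{1}{n}\sum_{i=0}^{n-1} f(\sigma^i x, \sigma^i y^{(n)}) \leq I_n(x) + 1/n$. Form the empirical joinings $m_n := \frac{1}{n}\sum_{i=0}^{n-1} \delta_{(\sigma^i x,\, \sigma^i y^{(n)})}$ on $\Sigma_2 \times \Sigma_2$. By weak-* compactness of the space of probability measures on the compact space $\Sigma_2 \times \Sigma_2$, pass to a subsequence along which $m_n \to m$. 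Standard arguments show $m$ is $(\sigma\times\sigma)$-invariant (the empirical measures are asymptotically invariant) and that $(\pi_2)_* m$ is supported on $Z$ (each $\sigma^i y^{(n)} \in Z$, $Z$ closed, so $(\pi_2)_* m_n$ is supported on $Z$, a closed condition preserved under weak-* limits). The delicate point is that we need $(\pi_1)_* m = \mu$: the first-coordinate marginal of $m_n$ is the empirical measure $\frac{1}{n}\sum_{i=0}^{n-1}\delta_{\sigma^i x}$, which by the Birkhoff ergodic theorem converges weak-* to $\mu$ for $\mu$-a.e.\ $x$ — so choosing $x$ in the full-measure set where both Birkhoff (applied to a countable dense set of test functions, or directly) and the Step 1 convergence hold, we get $(\pi_1)_* m = \mu$. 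Then $m$ is admissible, and $\int f\,dm_n = \frac{1}{n}\sum_{i=0}^{n-1} f(\sigma^i x,\sigma^i y^{(n)}) \leq I_n(x) + 1/n \to L$, while $\int f\,dm_n \to \int f\,dm$ by continuity of $f$; hence $R \leq \int f\,dm \leq L$. Combining Steps 2 and 3 gives equality, and since $L$ equals the limit in the statement $\mu$-a.e., we are done. The one subtlety to handle carefully is the interchange of "infimum over $y$" with "limit/integral" in Step 1, and the measurability of $x \mapsto y^{(n)}(x)$ in Step 3 (which can be sidestepped: we only need the bound for a fixed generic $x$, so no measurable selection is required — we fix $x$ first, then choose $y^{(n)}$).
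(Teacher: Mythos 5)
Your proof is correct, and the overall architecture matches the paper's: Step 1 is the same Kingman argument (the paper applies the subadditive ergodic theorem to $f_n(x):=-\inf_{y\in Z}\sum_{i=0}^{n-1}f(\sigma^ix,\sigma^iy)$), and Step 3 is exactly the paper's empirical-joining construction — fix a generic $x$, choose near-optimal $y^{(n)}\in Z$, form $m_n:=\frac{1}{n}\sum_{i=0}^{n-1}\delta_{(\sigma^ix,\sigma^iy^{(n)})}$, and pass to a weak-* limit, checking $(\sigma\times\sigma)$-invariance, that $(\pi_1)_*m=\mu$ via Birkhoff, and that $(\pi_2)_*m(Z)=1$ by closedness of $Z$. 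Your observation that no measurable selection is needed (since $x$ is fixed first) is exactly the point the paper implicitly uses as well.

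The one place you diverge is Step 2, and your version is arguably cleaner. The paper proves $\lambda_1\le\lambda_2$ by a pointwise argument: it applies the Birkhoff theorem to the joining $m$ to get a limit function $\overline{f}$ with $\int\overline{f}\,dm=\int f\,dm$, then intersects positive-$m$-measure sets to extract a single pair $(x,y)$ with $y\in Z$, $x$ generic for $\lambda_1$, and $\limsup_n\frac{1}{n}\sum f(\sigma^ix,\sigma^iy)\le\lambda_2+\varepsilon$, whence $\lambda_1\le\lambda_2+\varepsilon$. You instead integrate the trivial pointwise bound $I_n(x)\le\frac{1}{n}\sum_{i=0}^{n-1}f(\sigma^ix,\sigma^iy)$ (valid $m$-a.e. since the second coordinate lies in $Z$) directly against $m$, using $(\sigma\times\sigma)$-invariance to collapse the ergodic average and $(\pi_1)_*m=\mu$ together with Kingman's conclusion $\lim_n\int I_n\,d\mu=L$ to finish. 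This avoids invoking Birkhoff on the (possibly non-ergodic) joining and the intersection-of-full-measure-sets argument; it buys you a shorter and somewhat more transparent derivation of the easy inequality at no cost.
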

\begin{proof}
By applying the subadditive ergodic theorem to the sequence of functions $f_n \colon \Sigma_2 \to \mathbb{R}$ defined by
\[f_n(x):=-\inf\left\{\sum_{i=0}^{n-1}f(\sigma^ix,\sigma^iy) \colon y \in Z\right\},\]
it follows that there exists $\lambda_1 \in \mathbb{R}$ such that for $\mu$-a.e. $x \in \Sigma_2$,
\begin{equation}\label{fear-of-a-spack-planet}\lim_{n \to \infty}\inf_{y \in Z} \frac{1}{n}\sum_{i=0}^{n-1} f(\sigma^ix,\sigma^iy) =\lambda_1.\end{equation}
Let us define
\[\mathcal{J}:=\left\{m \in \mathcal{M}_{\sigma \times \sigma}\colon (\pi_1)_*m=\mu\text{ and }((\pi_2)_*m)(Z)=1\right\}.\]
Since $Z$ is a compact $\sigma$-invariant set, it follows from the Krylov-Bogolioubov theorem that there exists at least one $\nu \in \mathcal{M}_\sigma$ such that $\nu(Z)=1$. In particular $\mu \times \nu \in \mathcal{J}$ and thus $\mathcal{J}$ is nonempty. Let us define
\[\lambda_2:=\inf\left\{\int f\,dm \colon m \in \mathcal{J}\right\}.\]
To prove the lemma we must show that $\lambda_1=\lambda_2$.

We first prove $\lambda_1 \leq \lambda_2$. Let $\varepsilon>0$ and choose $m \in \mathcal{J}$ such that $\int f\,dm <\lambda_2+\varepsilon$. By the Birkhoff ergodic theorem for general invariant measures there exists a bounded measurable function $\overline{f} \colon \Sigma_2 \times \Sigma_2 \to \mathbb{R}$ such that
\[m\left(\left\{(x,y) \in \Sigma_2 \times \Sigma_2 \colon \lim_{n \to \infty}\frac{1}{n}\sum_{i=0}^{n-1}f(\sigma^ix,\sigma^iy) = \overline{f}(x,y)\right\}\right)=1\]
and $\int \overline{f}\,dm=\int f\,dm<\lambda_2+\varepsilon$. Since $m(\Sigma_2 \times Z)=((\pi_2)_*m)(Z)=1$ we therefore have in particular
\[m\left(\left\{(x,y) \in \Sigma_2 \times \Sigma_2 \colon y \in Z\text{ and }\limsup_{n \to \infty}\frac{1}{n}\sum_{i=0}^{n-1}f(\sigma^ix,\sigma^iy) \leq \lambda_2+\varepsilon\right\}\right)>0.\]
On the other hand, since $(\pi_1)_*m = \mu$ we obtain from \eqref{fear-of-a-spack-planet}
\[m\left(\left\{(x,y) \in \Sigma_2 \times \Sigma_2 \colon \lim_{n \to \infty} \frac{1}{n}\inf_{z \in Z}\sum_{i=0}^{n-1} f(\sigma^ix,\sigma^iz)=\lambda_1\right\}\right)=1.\]
It follows that there exist $x \in \Sigma_2$ and $y \in Z$ such that
\[\lambda_1 =  \lim_{n \to \infty} \frac{1}{n}\inf_{z \in Z}\sum_{i=0}^{n-1} f\left(\sigma^ix,\sigma^iz\right)\leq  \limsup_{n \to \infty} \frac{1}{n}\sum_{i=0}^{n-1} f\left(\sigma^ix,\sigma^iy\right) \leq \lambda_2+\varepsilon.\]
Since $\varepsilon>0$ is arbitrary we conclude that $\lambda_1 \leq \lambda_2$ as claimed.

We now claim that there exists $m \in \mathcal{J}$ such that $\int f\,dm = \lambda_1$, which implies that $\lambda_2 \leq \lambda_1$. Let $x \in \Sigma_2$ such that \eqref{fear-of-a-spack-planet} holds and such that additionally $\frac{1}{n}\sum_{i=0}^{n-1}\delta_{\sigma^ix} \to \mu$ in the weak-* topology as $n \to \infty$. Using \eqref{fear-of-a-spack-planet} we may choose a sequence of points $z^{(n)} \in Z$ such that
\begin{equation}\label{two-gentlemen-on-veronal}\lim_{n \to \infty} \frac{1}{n}\sum_{i=0}^{n-1} f\left(\sigma^ix,\sigma^iz^{(n)}\right) =\lambda_1.\end{equation}
Define a sequence $(m_n)_{n=1}^\infty$ of Borel probability measures on $\Sigma_2 \times \Sigma_2$ by
\[m_n:=\frac{1}{n}\sum_{i=0}^{n-1}\delta_{\left(\sigma^ix,\sigma^iz^{(n)}\right)},\]
and choose a strictly increasing sequence of natural numbers $(n_j)_{j=1}^\infty$ and a Borel probability measure $m$ on $\Sigma_2 \times \Sigma_2$ such that $m_{n_j} \to m$ in the limit as $j \to \infty$ with respect to the weak-* topology on the space of Borel probability measures on $\Sigma_2\times\Sigma_2$. If $g \colon \Sigma_2 \times \Sigma_2 \to \mathbb{R}$ is any continuous function, then
\begin{align*}\left|\int g \circ (\sigma \times \sigma)\,dm - \int g\,dm\right| &= \lim_{j \to \infty} \frac{1}{n_j}\left|\int g \circ (\sigma \times \sigma)\,dm_n - \int g\,dm_n\right| \\
&=\lim_{j \to \infty} \left|\frac{1}{n_j}\left(g\left(\sigma^{n_j}x,\sigma^{n_j}z^{(n_j)}\right)-g\left(x,z^{(n_j)}\right)\right)\right|\\
&\leq \lim_{j \to \infty}\frac{2|g|_\infty}{n_j}=0,\end{align*}
and since $g$ is arbitrary it follows that $m$ is $(\sigma \times \sigma)$-invariant. Since $\pi_1$ is continuous we have
\[\left(\pi_1\right)_*m = \lim_{j \to \infty}  \left(\pi_1\right)_*m_{n_j}= \lim_{j \to \infty}\frac{1}{n_j}\sum_{i=0}^{n_j-1}\delta_{\sigma^ix} = \mu\]
by our initial choice of $x$, where the above limits are taken in the weak-* topology on the space of Borel probability measures on $\Sigma_2$. Since $Z$ is closed and $((\pi_2)_*m_n)(Z)=1$ for all $n \geq 1$, we furthermore have
\[((\pi_2)_*m)(Z) \geq \limsup_{j \to \infty} ((\pi_2)_*m_{n_j})(Z)=1\]
and therefore $((\pi_2)_*m)(Z)=1$. Finally, using \eqref{two-gentlemen-on-veronal} we obtain
\[\int f\,dm = \lim_{j \to \infty} \int f\,dm_{n_j}=\lim_{j \to \infty} \frac{1}{n_j}\sum_{i=0}^{n_j-1} f\left(\sigma^ix,\sigma^iz^{(n_j)}\right) =\lambda_1.\]
We conclude that $m \in \mathcal{J}$ with $\int f\,dm=\lambda_1$ and therefore $\lambda_2 \leq \lambda_1$ as claimed. The proof is complete.
\end{proof}
The lemma having been established, we may now give the proof of Theorem \ref{th:tech-strictly}:
\begin{proof}[Proof of Theorem \ref{th:tech-strictly}]
Let $z$ and $Z$ be as in the statement of the theorem, and suppose that $\phi \in \ell_\infty(\mathbb{Z}_2\times\mathbb{Z})$, $\delta>0$ satisfy $\phi(z_i,i)>\phi(1-z_i,i)+\delta$ for every $i \in \mathbb{Z}$. Let $\mu \in\mathcal{E}_\sigma$. Using Lemma \ref{le:dbar-handy} we have for $\mu$-a.e. $x$
\begin{align*}\lim_{n \to \infty}\frac{1}{n} \inf_{y \in Z}\sum_{i=0}^{n-1}\mathfrak{d} (\sigma^ix,\sigma^{i}y)&=\inf\left\{\int \mathfrak{d}\,dm \colon m \in \bigcup_{\nu \in \mathcal{M}_\sigma \colon \nu(Z)=1} J(\mu,\nu)\right\}\\
&=\inf\{\overline{d}(\mu,\nu) \colon \nu \in\mathcal{M}_\sigma\text{ and } \nu(Z)=1\}.\end{align*}
For each $x \in \Sigma_2$, $n \geq 1$ and $k \in \mathbb{Z}$ we may estimate
\[\sum_{i=0}^{n-1}\phi(x_i,k+i)-\phi(z_{k+i},k+i) \leq -\delta \sum_{i=0}^{n-1}\mathfrak{d}(\sigma^ix,\sigma^{k+i}z)\] 
by virtue of the hypothesis on $\phi$. If $n \geq 1$ and $x \in \Sigma_2$ then
\[\log \left\|L_{x_n}^\phi \cdots L_{x_1}^\phi\right\| = \sup_{k \in \mathbb{Z}} \sum_{i=0}^{n-1}\phi(x_i,k+i)\]
and therefore
\begin{align*}\log \left\|L_{x_n}^\phi \cdots L_{x_1}^\phi\right\|&= \sup_{k \in \mathbb{Z}}\left(\sum_{i=0}^{n-1}\phi(z_{k+i},k+i) + \sum_{i=0}^{n-1}\left(\phi(x_i,k+i)-\phi(z_{k+i},k+i)\right)\right)\\
&\leq \sup_{k \in \mathbb{Z}}\sum_{i=0}^{n-1}\phi(z_{k+i},k+i) - \inf_{k \in \mathbb{Z}}\delta \sum_{i=0}^{n-1}\mathfrak{d}(\sigma^ix,\sigma^{k+i}z)\\
&\leq \sup_{y \in \Sigma_2} \log\|L^\phi_{y_n}\cdots L^\phi_{y_1}\| - \inf_{y \in Z}\delta \sum_{i=0}^{n-1}\mathfrak{d}(\sigma^ix,\sigma^iy).\end{align*}
It follows via the subadditive ergodic theorem that for $\mu$-a.e. $x \in \Sigma_2$
\begin{align*}\Lambda(\phi,\mu)&=
\lim_{n \to \infty}\frac{1}{n} \log\left\|L_{x_n}\cdots L_{x_1}\right\|\\
&=\lim_{n \to \infty}\frac{1}{n}\sup_{k \in \mathbb{Z}}\sum_{i=0}^{n-1}\phi(x_i,k+i)\\
&\leq \lim_{n \to \infty}\sup_{y \in \Sigma_2} \frac{1}{n}\log\|L^\phi_{y_n}\cdots L^\phi_{y_1}\| - \lim_{n \to \infty}\frac{\delta}{n}\inf_{y \in Z}\sum_{i=0}^{n-1}\mathfrak{d} (\sigma^ix,\sigma^{i}y)\\
&=\log\varrho\left(L_0^{\phi},L_1^{\phi}\right)-\delta \inf\left\{\overline{d}(\mu,\nu) \colon \nu\in\mathcal{M}_\sigma \text{ and }\nu(Z)=1\right\}\end{align*}
as required. 

Let us now consider the special case $\phi(z_i,i)=1$, $\phi(1-z_i,i)=0$ for all $i \in \mathbb{Z}$. Let $\mu \in \mathcal{E}_\sigma$. By Lemma \ref{le:dbar-handy} we again obtain
\[\lim_{n \to \infty} \frac{1}{n}\inf_{y \in Z}\sum_{i=0}^{n-1}\mathfrak{d} (\sigma^ix,\sigma^{i}y) = \inf\{\overline{d}(\mu,\nu) \colon \nu(Z)=1\}\]
for $\mu$-a.e. $x \in \Sigma_2$, but the stronger hypothesis on $\phi$ in this case yields
\[\sum_{i=0}^{n-1}\phi(x_i,k+i)=  \sum_{i=0}^{n-1}\left(\phi(z_{k+i},k+i)-\mathfrak{d}(\sigma^ix,\sigma^{k+i}z)\right)=n-\sum_{i=0}^{n-1}\mathfrak{d}(\sigma^ix,\sigma^{k+i}z)\] 
for every $x \in \Sigma_2$, $k \in \mathbb{Z}$ and $n \geq 1$. Hence, for each $x \in \Sigma_2$ and $n \geq 1$
\[\frac{1}{n}\log\left\|L^\phi_{x_n}\cdots L^\phi_{x_1}\right\| = \sup_{k \in \mathbb{Z}}\left(1-\frac{1}{n}\sum_{i=0}^{n-1}\mathfrak{d}(\sigma^ix,\sigma^{k+i}z)\right)=1-\frac{1}{n}\inf_{y \in Z}\sum_{i=0}^{n-1}\mathfrak{d}(\sigma^ix,\sigma^iy),\]
and it follows via the subadditive ergodic theorem that for $\mu$-a.e. $x \in \Sigma_2$
\begin{align*}\Lambda(\phi,\mu)&=\lim_{n \to \infty}\frac{1}{n}\log\left\|L^\phi_{x_n}\cdots L^\phi_{x_1}\right\|\\
 &= 1-\lim_{n \to \infty}\frac{1}{n}\inf_{y \in Z}\sum_{i=0}^{n-1}\mathfrak{d}(\sigma^ix,\sigma^{i}y)\\
&=1-\inf\left\{\overline{d}(\mu,\nu)\colon \nu \in \mathcal{E}_\sigma\text{ and }\nu(Z)=1\right\}\end{align*}
as required.
\end{proof}


\section{Proof of Theorem \ref{th:nomather}}
Before starting the proof we note the following simple result:
\begin{lemma}\label{le:lemino}
Let $Z \subseteq \Sigma_2$ be closed and $\sigma$-invariant, and let $f \colon  \Sigma_2 \to \mathbb{R}$ be continuous. Then
\[\lim_{n \to \infty}\sup_{x \in Z}\frac{1}{n}\sum_{i=0}^{n-1}f(\sigma^ix) = \sup_{\substack{\mu \in\mathcal{M}_\sigma\\\mu(Z)=1}} \int f\,d\mu.\]
\end{lemma}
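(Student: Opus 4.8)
The plan is to deduce Lemma \ref{le:lemino} directly from the Schreiber--Sturman--Stark theorem (Theorem \ref{th:stst}) already recalled in \S\ref{se:part2}. The key observation is that the statement is simply that theorem applied to the restricted system $T:=\sigma|_Z \colon Z \to Z$, which is a continuous transformation of the compact metric space $Z$, together with the very special subadditive sequence $f_n(x):=\sum_{i=0}^{n-1}f(\sigma^i x)$. This sequence is in fact \emph{additive}, hence trivially satisfies the subadditivity hypothesis $f_{n+m}(x)=f_n(T^m x)+f_m(x)$, and each $f_n$ is continuous on $Z$ since $f$ is continuous and $\sigma$ is a homeomorphism.

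First I would note that $Z$ equipped with the subspace topology is a compact metric space and $\sigma|_Z$ maps $Z$ into itself by hypothesis, so the hypotheses of Theorem \ref{th:stst} are met. Applying that theorem gives
\[\lim_{n \to \infty}\sup_{x \in Z}\frac{1}{n}\sum_{i=0}^{n-1}f(\sigma^i x) = \sup_{m \in \mathcal{M}_{\sigma|_Z}}\lim_{n \to \infty}\frac{1}{n}\int \Big(\sum_{i=0}^{n-1}f\circ\sigma^i\Big)\,dm,\]
where $\mathcal{M}_{\sigma|_Z}$ denotes the set of $\sigma|_Z$-invariant Borel probability measures on $Z$. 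For each such $m$ the invariance gives $\int f\circ\sigma^i\,dm=\int f\,dm$ for every $i$, so the inner limit collapses to $\int f\,dm$, and the right-hand side becomes $\sup_{m \in \mathcal{M}_{\sigma|_Z}}\int f\,dm$.

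The only remaining point is the bijective correspondence between $\mathcal{M}_{\sigma|_Z}$ and the set of $\mu \in \mathcal{M}_\sigma$ with $\mu(Z)=1$: a $\sigma|_Z$-invariant measure on $Z$ extends (via $\mu(A):=m(A\cap Z)$ for Borel $A\subseteq\Sigma_2$) to a $\sigma$-invariant measure on $\Sigma_2$ concentrated on $Z$, and conversely any $\mu\in\mathcal{M}_\sigma$ with $\mu(Z)=1$ restricts to a $\sigma|_Z$-invariant measure on $Z$; since $Z$ is closed these operations are mutually inverse and preserve integrals of continuous functions on $\Sigma_2$ (restricted to $Z$). Substituting this identification into the supremum yields exactly $\sup_{\mu\in\mathcal{M}_\sigma,\,\mu(Z)=1}\int f\,d\mu$, completing the proof. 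I do not anticipate a genuine obstacle here; the only mild care needed is in spelling out the measure correspondence, which is entirely routine because $Z$ is closed.
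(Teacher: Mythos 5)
Your proof is correct and follows exactly the same route as the paper: apply Theorem \ref{th:stst} to the restricted system $\sigma|_Z$ with the additive sequence $f_n(x)=\sum_{i=0}^{n-1}f(\sigma^i x)$, then identify $\sigma|_Z$-invariant measures on $Z$ with $\sigma$-invariant measures on $\Sigma_2$ giving full mass to $Z$. The paper simply states this in one line, while you spell out the (routine) measure correspondence, but the argument is the same.
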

\begin{proof}
Apply Theorem \ref{th:stst} with $X:=Z$, $f_n:=\sum_{i=0}^{n-1}f(\sigma^ix)$.
\end{proof}

\begin{proof}[Proof of Theorem \ref{th:nomather}]
A classical construction due to J. Oxtoby  \cite{Ox52} gives an explicit example of a sequence $z \in \Sigma_2$ such that $Z:=\overline{\{\sigma^i z \colon i \in\mathbb{Z}\}}$ is minimal but not uniquely ergodic. If we define $[1]:=\{x \in \Sigma_2 \colon x_0=1\}$ then Oxtoby's example has the specific property that there exist two ergodic measures $\mu_1, \mu_2$ such that $\mu_1(Z)=\mu_2(Z)=1$, but $\mu_1([1])>\mu_2([1])$. S. Williams \cite{Wi84} subsequently showed that there exist \emph{exactly} two ergodic measures supported on Oxtoby's minimal set $Z$, and we will make use of this fact below, constructing a pair $(L_0^\phi,L_1^\phi)$ for which $\mu_1$ is maximising and $\mu_2$ is not. Our argument can easily be adapted to minimal sets $Z$ in which a greater number of ergodic measures exist, in which case our argument shows that only those measures $\mu$ supported on $Z$ which maximise $\mu([1])$ can be maximising measures for the associated pair $(L_0^\phi,L_1^\phi)$.

So, let $z$, $Z$ and $\mu_1,\mu_2$ be as described above. If we define $\phi \in \ell_\infty(\mathbb{Z}_2\times\mathbb{Z})$ by
\[\phi(a,i):=\left\{\begin{array}{cl}2&\text{if }a=z_i=1\\
1&\text{if }a=z_i=0\\
0&\text{if }a \neq z_i
\end{array}\right.\]
then  $\mathcal{M}_{\max}(\phi)\cap\mathcal{E}_\sigma \subseteq \{\mu_1,\mu_2\}$ by force of Theorem \ref{th:tech-strictly}. For all $x \in \Sigma_2$, $n \geq 1$ and $k \in \mathbb{Z}$ we have
\begin{align*}\sum_{i=0}^{n-1}\phi(x_i,i+k)& = \sum_{i=0}^{n-1}\phi(z_{i+k},i+k) + \sum_{i=0}^{n-1} \left(\phi(x_i,i+k)-\phi(z_{i+k},i+k)\right)\\
&= n +\sum_{i=0}^{n-1} \chi_{[1]}(\sigma^{i+k}z)  - \sum_{i=0}^{n-1}\mathfrak{d}(\sigma^ix,\sigma^{i+k}z),\end{align*}
so taking the supremum with respect to $k \in \mathbb{Z}$ it follows that for each $x \in \Sigma_2$ and $n \geq 1$
\[\frac{1}{n}\log\left\|L^\phi_{x_n}\cdots L^\phi_{x_1}\right\| = 1-\inf_{y \in Z}\frac{1}{n}\sum_{i=0}^{n-1}\left( \mathfrak{d}(\sigma^ix,\sigma^iy)-\chi_{[1]}(\sigma^iy)\right).\]
Apply Lemma \ref{le:dbar-handy} with $f(x,y):=\mathfrak{d}(x,y)-\chi_{[1]}(y)$ we find that for $\mu_2$-a.e. $x$
\[\Lambda(\phi,\mu_2)=\lim_{n \to\infty}\frac{1}{n}\log\left\|L^\phi_{x_n}\cdots L^\phi_{x_1}\right\|=1-\inf\left\{\int f\,dm\colon  m \in \bigcup_{\substack{\mu \in \mathcal{M}_\sigma\\\mu(Z)=1}} \mathcal{J}(\mu_2,\mu)\right\}.\]
Using the weak-* compactness of $\mathcal{M}_{\sigma \times \sigma}$ one may easily show that there exists an $m$ which attains this infimum. The measure $\mu:=(\pi_2)_*m\in\mathcal{M}_\sigma$ satisfies $\mu(Z)=1$ and is therefore equal to $\beta\mu_1+(1-\beta)\mu_2$ for some real number $\beta \in [0,1]$. If $\beta \neq 0$ then
\[\int f\,dm = \int \mathfrak{d}\,dm - \mu([1]) \geq \overline{d}(\mu_2,\mu)-\mu([1])>-\mu([1])\geq -\mu_1([1])\]
and if $\beta=0$ then
\[\int f\,dm = \int \mathfrak{d}\,dm - \mu_2([1]) \geq -\mu_2([1])>-\mu_1([1])\]
so we conclude that
\[\Lambda(\phi,\mu_2)<1+\mu_1([1]).\]
Applying Lemma \ref{le:lemino} we observe that since $\{\sigma^n z \colon n \in\mathbb{Z}\}$ is dense in $Z$, 
\[ \lim_{n \to \infty} \frac{1}{n}\sup_{k \in\mathbb{Z}}\sum_{i=0}^{n-1}\chi_{[1]}(\sigma^{i+k}z)=\lim_{n \to \infty}\sup_{x \in Z}\sum_{i=0}^{n-1}\chi_{[1]}(\sigma^ix) = \sup_{\substack{\nu\in\mathcal{M}_\sigma\\\nu(Z)=1}}\nu([1])=\mu_1([1]).\]
Since therefore
\begin{align*}\log\varrho\left(L_0^\phi,L_1^\phi\right)&\geq \liminf_{n \to \infty}\frac{1}{n}\log\left\|L^\phi_{z_n}\cdots L^\phi_{z_1}\right\|\\
&= \liminf_{n \to \infty} \frac{1}{n}\sup_{k \in\mathbb{Z}}\sum_{i=0}^{n-1}\phi(z_i,i+k)\\
&=\liminf_{n \to\infty}\left(1+\frac{1}{n}\sup_{k \in \mathbb{Z}}\sum_{i=0}^{n-1}\chi_{[1]}(\sigma^{i+k}z) \right)=1+\mu_1([1])\end{align*}
we conclude that $\Lambda(\phi,\mu_2)<\log\varrho(L_0^\phi,L_1^\phi)$, so that $\mu_2$ is not maximising. It follows that $\mathcal{M}_{\max}(\phi)\cap\mathcal{E}_\sigma=\{\mu_1\}$, and since $\mathcal{M}_{\max}(\phi)$ is the closed convex hull of that set we necessarily have $\mathcal{M}_{\max}(\phi)=\{\mu_1\}$. Since $Z$ is minimal, $\mu_1$ and $\mu_2$ both have support equal to $Z$. We conclude that $(L_0^\phi,L_1^\phi)$ has the properties claimed in the statement of Theorem \ref{th:nomather}.\end{proof}


\section{Proof of Theorem \ref{th:unique}}
In this section we prove the following result which is easily seen to imply Theorem \ref{th:unique}:
\begin{theorem}\label{pieless-in-gaza}
The set
\[\left\{\phi \in \ell_\infty(\mathbb{Z}_2\times\mathbb{Z}) \colon \left(L_0^{\phi},L_1^{\phi}\right)\text{ has a unique maximising measure}\right\}\]
is a dense $G_\delta$ subset of $\ell_\infty(\mathbb{Z}_2\times\mathbb{Z})$.
\end{theorem}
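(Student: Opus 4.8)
The strategy is to combine the upper semicontinuity of the correspondence $\phi\mapsto\mathcal M_{\max}(\phi)$ (which is exactly the content of Lemma \ref{kuratowski}) with a perturbation argument of the kind familiar from generic-uniqueness results in classical ergodic optimisation; the additional ingredient — needed because $\Lambda(\phi,\cdot)$ is genuinely non-linear in $\phi$ owing to the supremum over shifts in \eqref{eq:common} — is the ``concentration of optimal alignments'' that underlies the proof of Theorem \ref{th:tech-strictly}. \emph{For the $G_\delta$ property}, fix a countable weak-$*$-dense family of locally constant functions $\{g_j\}_{j\ge1}\subseteq C(\Sigma_2)$ and set $\mathrm{osc}_j(\psi):=\sup\{\,|\int g_j\,d\mu-\int g_j\,d\nu|:\mu,\nu\in\mathcal M_{\max}(\psi)\,\}$ for $\psi\in\ell_\infty(\mathbb{Z}_2\times\mathbb{Z})$. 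By Lemma \ref{kuratowski} and compactness of $\mathcal M_\sigma$ the map $\psi\mapsto\mathcal M_{\max}(\psi)$ is upper semicontinuous, so each $\mathrm{osc}_j$ is upper semicontinuous and each $U_{j,k}:=\{\psi:\mathrm{osc}_j(\psi)<1/k\}$ is open. Since the $g_j$ are dense, $(L_0^\psi,L_1^\psi)$ has a unique maximising measure precisely when $\mathrm{osc}_j(\psi)=0$ for all $j$, i.e. iff $\psi\in\bigcap_{j,k}U_{j,k}$; so it suffices to prove each $U_{j,k}$ dense.

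Fix $j,k$, a point $\phi$ and $\varepsilon>0$. Choose $z\in\Sigma_2$ with $\phi(z_i,i)=\max_a\phi(a,i)$ for every $i$ and set $\phi_1(z_i,i):=\phi(z_i,i)+\tfrac\varepsilon3$, $\phi_1(1-z_i,i):=\phi(1-z_i,i)$; then $z$ is uniformly greedy for $\phi_1$, in that $\phi_1(z_i,i)\ge\phi_1(1-z_i,i)+\tfrac\varepsilon3$ for all $i$ (so that, by Theorem \ref{th:tech-strictly}, the maximising ergodic measures of $\phi_1$ are already carried by $\overline{\{\sigma^iz:i\in\mathbb{Z}\}}$). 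Now encode $g_j$ along $z$: let $h\in\ell_\infty(\mathbb{Z}_2\times\mathbb{Z})$ be given by $h(z_i,i):=g_j(\sigma^iz)$, $h(1-z_i,i):=0$, and consider the path $\psi_t:=\phi_1+th$, $t\in\mathbb R$. For sufficiently small $|t|$ the sequence $z$ remains uniformly greedy for $\psi_t$, with margin at least $\delta_t:=\tfrac\varepsilon3-|t|\,\|g_j\|_\infty>0$, and $\|\psi_t-\phi\|_\infty<\varepsilon$. The function $P(t):=\log\varrho(L_0^{\psi_t},L_1^{\psi_t})=\sup_{\mu\in\mathcal M_\sigma}\Lambda(\psi_t,\mu)$ is convex in $t$: for each fixed $n$ and $\mu$, $\tfrac1n\int\sup_{k}\sum_{i=0}^{n-1}\psi_t(x_i,i+k)\,d\mu(x)$ is a supremum of functions affine in $t$, and convexity survives integration, the infimum over $n$, and the outer supremum over $\mu$. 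Hence $P$ is differentiable on a dense set of parameters; fix such a $t_0$ with $|t_0|$ small.

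Write $M(j):=\max_a\psi_{t_0}(a,j)=\psi_{t_0}(z_j,j)$ and $m(j):=M(j)-\psi_{t_0}(1-z_j,j)\ge\delta_{t_0}$. Using \eqref{eq:common} and the identity $\psi_{t_0}(x_i,i+\ell)=M(i+\ell)-m(i+\ell)\,\mathfrak d(\sigma^ix,\sigma^{i+\ell}z)$, one has for all $x,n$
\[\log\bigl\|L^{\psi_{t_0}}_{x_n}\cdots L^{\psi_{t_0}}_{x_1}\bigr\|\ \le\ \sup_{\ell\in\mathbb Z}\sum_{j=\ell}^{\ell+n-1}M(j)\ -\ \delta_{t_0}\,\#\{\,0\le i<n:\ x_i\ne z_{i+\ell^*}\,\},\]
where $\ell^*=\ell^*(x,n)$ is an optimal shift, while $\tfrac1n\sup_\ell\sum_{j=\ell}^{\ell+n-1}M(j)\to\log\varrho(L_0^{\psi_{t_0}},L_1^{\psi_{t_0}})$ by the formula for the joint spectral radius recalled in \S\ref{se:part2}. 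Consequently, if $\mu$ is an ergodic \emph{maximising} measure of $\psi_{t_0}$, the subadditive ergodic theorem forces $\#\{0\le i<n:x_i\ne z_{i+\ell^*}\}=o(n)$ for $\mu$-a.e. $x$; since $g_j$ is locally constant and $x$ is $\mu$-generic this gives $\tfrac1n\sum_{j=\ell^*}^{\ell^*+n-1}g_j(\sigma^jz)\to\int g_j\,d\mu$, and a routine computation (differentiating the convex function $t\mapsto\Lambda(\psi_t,\mu)$ at $t_0$, which by the envelope theorem is differentiable there since $\mu$ is maximising) yields $\partial_t\Lambda(\psi_t,\mu)\big|_{t_0}=\int g_j\,d\mu=P'(t_0)$. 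Thus every ergodic maximising measure of $\psi_{t_0}$ assigns the common value $P'(t_0)$ to $\int g_j$; since $\mu\mapsto\int g_j\,d\mu$ is affine and weak-$*$ continuous and $\mathcal M_{\max}(\psi_{t_0})$ is the closed convex hull of its ergodic points, $\mathrm{osc}_j(\psi_{t_0})=0<1/k$. By upper semicontinuity of $\mathrm{osc}_j$ this persists on a neighbourhood of $\psi_{t_0}$, so $\psi_{t_0}\in U_{j,k}$, and as $\|\psi_{t_0}-\phi\|_\infty<\varepsilon$ we conclude $U_{j,k}$ is dense.

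I expect the step ``concentration of alignments $\Rightarrow$ clean derivative'' to be the main obstacle. Because the growth rate is a supremum over shifts, $\phi\mapsto\Lambda(\phi,\mu)$ is convex but not affine and $\partial_t\Lambda(\psi_t,\mu)$ is a priori not the integral of a fixed function; it is precisely the uniform greediness of $z$ — which, as in the proof of Theorem \ref{th:tech-strictly}, pins the optimal shifts onto near-exact copies of $z$ and hence onto $\mu$-typical words — that forces this derivative to equal $\int g_j\,d\mu$ and makes the classical argument run. The residual technical points (differentiating under the integral and the limit in $n$, the non-uniqueness of the optimal shift, and the identification of $o(n)$-matched windows of $z$ as $\mu$-typical for locally constant test functions) are straightforward given the estimates of \S\ref{se:foundation} and the properties of the Ornstein $\overline d$-metric recorded in \S\ref{se:part2}.
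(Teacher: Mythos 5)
Your $G_\delta$ skeleton is the same as the paper's (the paper uses cylinder indicators $\chi_{[\omega]}$ rather than a generic dense family $\{g_j\}$, but this is cosmetic), and your use of Lemma~\ref{kuratowski} for upper semicontinuity matches Lemma~\ref{gfs}. The density argument, however, is genuinely different. The paper does not use convexity or differentiability at all: it constructs, for each word $\omega$ and each $\mu\in\mathcal M_{\max}(\phi)$, a specific bounded perturbation $\psi$ (Lemma~\ref{xyz}) satisfying the two one-sided estimates $\Lambda(\psi,\nu)\le N\nu([\omega])$ and $\log\varrho(L^{\phi+\lambda\psi})\ge\log\varrho(L^{\phi})+N\lambda\mu([\omega])$; combining these with the subadditivity estimate of Lemma~\ref{le:simples} forces $\nu_n([\omega])\ge\mu([\omega])$ for any $\nu_n\in\mathcal M_{\max}(\phi+2^{-n}\psi)$, while Lemma~\ref{kuratowski} supplies the matching upper bound, pinning down $\nu_n([\omega])\to\mu([\omega])$. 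Your route instead perturbs first to create a uniformly greedy $z$, then runs the classical convex-analysis argument along the one-parameter path $\psi_t$.

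Two points in your argument deserve flagging. First, minor: the infimum over $n$ of convex functions is \emph{not} convex in general; what saves you is that $\Lambda(\psi_t,\mu)=\lim_{n}\tfrac1na_n(t)$ is a pointwise limit of convex functions (not merely an infimum), and pointwise limits of convex functions are convex. Second, and substantially: the ``clean derivative'' step $\partial_t\Lambda(\psi_t,\mu)\big|_{t_0}=\int g_j\,d\mu$ is precisely the step that classical ergodic optimisation gets for free because $\Lambda$ is linear in the potential, and which fails to be automatic here because of the supremum over shifts. The outline you give (optimal shifts match $z$ up to $o(n)$ mismatches, locally constant $g_j$, Birkhoff) can be pushed through, but one has to deal with (a) the supremum over $\ell\in\mathbb{Z}$ possibly not being attained, so one must work with near-optimal shifts and track the error; (b) interchanging the $n\to\infty$ limit, the integral over $x$, and the $t$-derivative, which requires a convexity sandwich (for pointwise-converging convex functions one has $\liminf_n f'_{n,-}(t_0)\ge f'_-(t_0)$ and $\limsup_n f'_{n,+}(t_0)\le f'_+(t_0)$, which combined with the already-established differentiability of $\Lambda(\psi_\cdot,\mu)$ at $t_0$ and the uniform bound $|F'_{n,\pm}|\le\|h\|_\infty$ closes the gap via bounded convergence); and (c) uniformity of the $o(n)$ mismatch bound over all near-optimal shifts, so that left and right derivatives of the finite-$n$ approximants converge to the same limit. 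None of these is likely fatal, but as written they constitute a real gap, and the author's Lemma~\ref{xyz} is arguably the more economical device: it avoids derivatives entirely and replaces the entire convex-analysis machinery with a pair of explicit subadditive inequalities that are checked by hand.
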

Before beginning the proof let us fix some notation. We shall say that a \emph{word} of length $N$ is an ordered sequence of $N$ symbols each of which is either zero or one. We will use the symbol $\omega$ to denote an arbitrary word. Given a word $\omega$ of length $N$ we will use the symbols $\omega_0,\omega_1,\ldots,\omega_{N-1}$ to denote its successive symbols, so that for example if $\omega$ is the word $10010$ then $\omega_0=\omega_3=1$ and $\omega_1=\omega_2=\omega_4=0$. Given a word $\omega$ of length $N$ we define the \emph{cylinder set} associated to $\omega$ to be the set
\[[\omega]:=\left\{(x_i)_{i \in \mathbb{Z}} \in \Sigma_2 \colon x_i=\omega_i\text{ for all }i=0,\ldots,N-1\right\}.\]
It is well-known that shift-invariant measures are completely determined by the values which they give to cylinder sets: if two measures $\mu_1,\mu_2 \in \mathcal{M}_\sigma$ are distinct from one another then there exists a word $\omega$ such that $\mu_1([\omega]) \neq \mu_2([\omega])$. We observe that for any word $\omega$ the set $[\omega]$ is clopen and hence the function $\chi_{[\omega]}$ is continuous.

The following lemma is of a standard type; for similar arguments in different contexts see e.g. \cite[Lemma 3.1]{Mo10c} and the proofs of \cite[Proposition 10]{CoLoTh01} and \cite[Theorem 3.2]{Je06a}.
\begin{lemma}\label{gfs}
For every finite word $\omega$ and real number $\varepsilon>0$, the set
\[\mathcal{O}_{\omega,\varepsilon}:=\left\{\phi \in \ell_\infty(\mathbb{Z}_2 \times \mathbb{Z}) \colon |\mu_1([\omega])-\mu_2([\omega])|<\varepsilon\text{ for all }\mu_1,\mu_2 \in \mathcal{M}_{\max}(\phi)\right\}\]is open.
\end{lemma}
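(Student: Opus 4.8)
The plan is to prove openness of $\mathcal{O}_{\omega,\varepsilon}$ by a standard compactness-plus-continuity argument, combining the upper semicontinuity of the maximising-measure set (which is essentially Lemma \ref{kuratowski}) with the continuity of $\mu \mapsto \mu([\omega])$ in the weak-* topology. Concretely, I would argue by contradiction: suppose $\phi \in \mathcal{O}_{\omega,\varepsilon}$ but $\phi$ is not interior, so there is a sequence $\phi_n \to \phi$ in $\ell_\infty(\mathbb{Z}_2\times\mathbb{Z})$ with $\phi_n \notin \mathcal{O}_{\omega,\varepsilon}$ for every $n$. For each $n$ pick $\mu_n^{(1)},\mu_n^{(2)} \in \mathcal{M}_{\max}(\phi_n)$ with $|\mu_n^{(1)}([\omega]) - \mu_n^{(2)}([\omega])| \geq \varepsilon$.

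Next I would pass to convergent subsequences: $\mathcal{M}_\sigma$ is weak-* compact and metrisable, so after extracting a subsequence we may assume $\mu_n^{(1)} \to \mu^{(1)}$ and $\mu_n^{(2)} \to \mu^{(2)}$ in the weak-* topology for some $\mu^{(1)},\mu^{(2)} \in \mathcal{M}_\sigma$. By Lemma \ref{kuratowski}, since $\mu_n^{(j)} \in \mathcal{M}_{\max}(\phi_n)$ and $\phi_n \to \phi$, we get $\mu^{(1)},\mu^{(2)} \in \mathcal{M}_{\max}(\phi)$. On the other hand $[\omega]$ is clopen, so $\chi_{[\omega]}$ is continuous, and weak-* convergence gives $\mu_n^{(j)}([\omega]) \to \mu^{(j)}([\omega])$ for $j=1,2$; hence $|\mu^{(1)}([\omega]) - \mu^{(2)}([\omega])| \geq \varepsilon$. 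This contradicts $\phi \in \mathcal{O}_{\omega,\varepsilon}$, which required $|\mu_1([\omega]) - \mu_2([\omega])| < \varepsilon$ for all $\mu_1,\mu_2 \in \mathcal{M}_{\max}(\phi)$. Therefore $\phi$ is interior to $\mathcal{O}_{\omega,\varepsilon}$ after all, and the set is open.

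The only genuinely nontrivial input is Lemma \ref{kuratowski} (upper semicontinuity of $\phi \mapsto \mathcal{M}_{\max}(\phi)$ with respect to weak-* limits), which has already been established; everything else is routine bookkeeping with compactness of $\mathcal{M}_\sigma$, metrisability so that sequences suffice, and the clopenness of cylinder sets ensuring $\chi_{[\omega]} \in C(\Sigma_2)$. I do not anticipate a real obstacle here — the main thing to be careful about is simply that the diagonal extraction of a single subsequence works for \emph{both} sequences $(\mu_n^{(1)})$ and $(\mu_n^{(2)})$ simultaneously, which is immediate since one extracts from the first and then from the second. (This lemma is the per-word ingredient; the proof of Theorem \ref{pieless-in-gaza} will later assemble the countably many sets $\mathcal{O}_{\omega,1/k}$, show each is dense, and invoke the Baire category theorem, but that assembly is outside the scope of the present lemma.)
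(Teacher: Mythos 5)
Your proof is correct and follows essentially the same route as the paper's: the paper shows directly that the complement $\mathcal{D}_{\omega,\varepsilon}$ is sequentially closed, whereas you phrase the same sequence extraction, appeal to Lemma~\ref{kuratowski}, and continuity of $\nu \mapsto \nu([\omega])$ as a proof by contradiction, but the mathematical content is identical.
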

\begin{proof}
Let $\omega$ and $\varepsilon$ be as in the statement of the lemma. We will show that the set
\[\mathcal{D}_{\omega,\varepsilon}:=\left\{\phi \in \ell_\infty(\mathbb{Z}_2 \times \mathbb{Z}) \colon \exists \,\mu_1,\mu_2 \in \mathcal{M}_{\max}(\phi)\text{ such that }|\mu_1([\omega])-\mu_2([\omega])| \geq\varepsilon\right\}\]
is closed. To this end let us suppose that $(\phi_n)$ is a sequence of elements of $\mathcal{D}_{\omega,\varepsilon}$ which converges to a limit $\phi \in \ell_\infty(\mathbb{Z}_2 \times \mathbb{Z})$. For $i=1,2$ let $(\mu_{i,n})_{n=1}^\infty$ be a sequence of measures such that $\mu_{i,n} \in \mathcal{M}_{\max}(\phi_n)$ and $|\mu_{1,n}([\omega])-\mu_{2,n}([\omega])| \geq \varepsilon$ for all $n \geq 1$. Since $\mathcal{M}_\sigma$ is weak-* compact we may choose a strictly increasing sequence of integers $(n_j)_{j=1}^\infty$ and two measures $\mu_1,\mu_2 \in \mathcal{M}_\sigma$ such that $\lim_{j \to \infty} \mu_{i,n_j}=\mu_i$ in the weak-* topology for $i=1,2$. By Lemma \ref{kuratowski} we have $\mu_1,\mu_2 \in \mathcal{M}_{\max}(\phi)$. Since $[\omega]\subset \Sigma_2$ is clopen the function $\nu \mapsto \nu([\omega])$ is a continuous function from $\mathcal{M}_\sigma$ to $[0,1]$, and follows that $|\mu_1([\omega])-\mu_2([\omega])| \geq \varepsilon$. We conclude that $\phi \in \mathcal{D}_{\omega,\varepsilon}$ and therefore $\mathcal{D}_{\omega,\varepsilon}$ is closed as required. The proof is complete.
\end{proof}
We also require the following more technical lemma, the proof of which, due to its length, we postpone until the following section:
\begin{lemma}\label{xyz}
Let $\phi \in \ell_\infty(\mathbb{Z}_2 \times \mathbb{Z})$, let $\omega$ be a finite word of length $N$, and let $\mu \in \mathcal{M}_{\max}(\phi)$.
Then there exists $\psi \in \ell_\infty(\mathbb{Z}_2 \times \mathbb{Z})$ such that:
\begin{enumerate}
\item
For each $\nu \in \mathcal{M}_\sigma$, $\Lambda(\psi,\nu) \leq N\nu([\omega])$.
\item
For each $\lambda>0$, $\log\varrho\left(L_0^{\phi+\lambda\psi},L_1^{\phi+\lambda\psi}\right)\geq \log\varrho\left(L_0^{\phi},L_1^{\phi}\right)+N\lambda\mu([\omega])$.
\end{enumerate}
\end{lemma}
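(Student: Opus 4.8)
First I would reduce to the case where $\mu$ is ergodic: the map $\nu\mapsto\nu([\omega])$ is affine and weak-$*$ continuous on the compact convex set $\mathcal{M}_{\max}(\phi)$, so it is maximised there at an extreme point $\mu'\in\mathcal{M}_{\max}(\phi)\cap\mathcal{E}_\sigma$, and since $\mu'([\omega])\ge\mu([\omega])$, since property~(1) makes no reference to $\mu$, and since property~(2) for $\mu'$ implies property~(2) for $\mu$, it suffices to treat $\mu'$; so assume $\mu$ ergodic. Write $\mathcal{L}_n(y;\eta):=\log\|L^\eta_{y_n}\cdots L^\eta_{y_1}\|=\sup_{k\in\mathbb{Z}}\sum_{i=0}^{n-1}\eta(y_i,i+k)$. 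The sequence $y\mapsto\mathcal{L}_n(y;\phi)$ is subadditive over $\sigma$ in the sense of Theorem~\ref{th:stst}, so the subadditive ergodic theorem together with the hypothesis $\Lambda(\phi,\mu)=\log\varrho(L_0^\phi,L_1^\phi)$, and the Birkhoff theorem, let me fix $x\in\Sigma_2$ with $\tfrac1n\mathcal{L}_n(x;\phi)\to\log\varrho(L_0^\phi,L_1^\phi)$ and $\tfrac1n\sum_{i=0}^{n-1}\chi_{[\omega]}(\sigma^ix)\to\mu([\omega])$; for each $n$ I choose $\kappa_n\in\mathbb{Z}$ with $\sum_{i=0}^{n-1}\phi(x_i,i+\kappa_n)\ge\mathcal{L}_n(x;\phi)-1$. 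Passing to a subsequence $(n_j)$ I may assume either that $\kappa_{n_j}$ equals a constant $\kappa$, or that the blocks $W_j:=[\kappa_{n_j}-N,\kappa_{n_j}+n_j+N)$ are pairwise disjoint; I then define the reference $z\in\Sigma_2$ by $z:=\sigma^{-\kappa}x$ in the first case and by $z_m:=x_{m-\kappa_{n_j}}$ for $m\in W_j$ (and $z_m:=0$ otherwise) in the second, so that in both cases $z$ agrees with $\sigma^{-\kappa_{n_j}}x$ on $W_j$ for every $j$. Finally I put $c_i:=\#\{\ell\in\{0,\ldots,N-1\}:\sigma^{i-\ell}z\in[\omega]\}$ and define
\[
\psi(a,i):=\begin{cases}c_i&\text{if }a=z_i,\\-2N^2&\text{if }a\neq z_i,\end{cases}
\]
which lies in $\ell_\infty(\mathbb{Z}_2\times\mathbb{Z})$ since $0\le c_i\le N$.

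For property~(1) I would establish the pointwise bound $\sum_{i=0}^{n-1}\psi(y_i,i+k)\le N\sum_{i=0}^{n-1}\chi_{[\omega]}(\sigma^iy)+N^2$ for all $y\in\Sigma_2$, $k\in\mathbb{Z}$, $n\ge1$. Writing $G:=\{0\le i<n:y_i=z_{i+k}\}$ and $B:=\{0,\ldots,n-1\}\setminus G$, the occurrences of $\omega$ in $z$ counted by $c_{i+k}$ with $i\in G$ split into those lying on a block of positions all contained in $G$ — which are then genuine occurrences of $\omega$ in $y$, each charged at most $N$ times — and those straddling a position of $B$, whose number is at most $2N^2|B|$ by a direct count, and which is cancelled by the penalty contribution $-2N^2|B|$ (the additive $N^2$ absorbing boundary terms). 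Dividing by $n$, integrating against an arbitrary $\nu\in\mathcal{M}_\sigma$, using $\sigma$-invariance, and using $\Lambda(\psi,\nu)=\inf_n\tfrac1n\int\mathcal{L}_n(y;\psi)\,d\nu(y)$, one gets $\Lambda(\psi,\nu)\le N\nu([\omega])$.

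For property~(2) I would fix $\lambda>0$ and work along $(n_j)$. Taking $k=\kappa_{n_j}$ in the supremum defining $\mathcal{L}_{n_j}(x;\phi+\lambda\psi)$,
\[
\mathcal{L}_{n_j}(x;\phi+\lambda\psi)\ \ge\ \sum_{i=0}^{n_j-1}\phi(x_i,i+\kappa_{n_j})\ +\ \lambda\sum_{i=0}^{n_j-1}\psi(x_i,i+\kappa_{n_j}),
\]
where the first sum is $\ge\mathcal{L}_{n_j}(x;\phi)-1=n_j\log\varrho(L_0^\phi,L_1^\phi)-o(n_j)$ by the choice of $\kappa_{n_j}$. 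Since $z$ agrees with $\sigma^{-\kappa_{n_j}}x$ on $W_j$ we have $x_i=z_{i+\kappa_{n_j}}$ for $0\le i<n_j$, and $c_{i+\kappa_{n_j}}$ only inspects $z$ inside $W_j$, so $\psi(x_i,i+\kappa_{n_j})=\#\{\ell\in\{0,\ldots,N-1\}:\sigma^{i-\ell}x\in[\omega]\}$; summing over $i$ gives $\sum_{i=0}^{n_j-1}\psi(x_i,i+\kappa_{n_j})\ge N\sum_{i=0}^{n_j-1}\chi_{[\omega]}(\sigma^ix)-N^2=Nn_j\mu([\omega])-o(n_j)$. Dividing by $n_j$, letting $j\to\infty$, and using $\log\varrho(L_0^{\phi+\lambda\psi},L_1^{\phi+\lambda\psi})=\lim_n\tfrac1n\sup_y\mathcal{L}_n(y;\phi+\lambda\psi)\ge\limsup_j\tfrac1{n_j}\mathcal{L}_{n_j}(x;\phi+\lambda\psi)$ yields property~(2).

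The hard part is the construction of $z$. Because $\phi$ genuinely depends on position, the near-optimal shifts $\kappa_n$ need not be bounded, so the length-$n$ windows along which $x$ realises the growth of $\phi$ may escape to infinity, and no single $\mu$-generic sequence can be expected to be near-$\phi$-optimal along every $[0,n)$ with the shift $\kappa_n$. Passing to a subsequence along which these windows become pairwise disjoint (or all equally shifted) and gluing the appropriate shifts of $x$ onto them is precisely what allows $\psi$ to register $\omega$-occurrences at the positions that property~(2) requires, while the penalty term keeps property~(1) intact; verifying the combinatorial inequality of property~(1) with a sufficiently large penalty constant is the other step that requires care.
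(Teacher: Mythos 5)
Your construction shares the key ideas of the paper's argument: choose a $\mu$-generic point along which the Lyapunov exponent is realised, pick near-optimal alignment offsets $\kappa_n$, and define a perturbation $\psi$ that rewards occurrences of $\omega$ along a reference configuration and imposes a large penalty (of order $N^2$) per mismatch, so that a uniform combinatorial estimate gives property~(1) and evaluating along the generic orbit with the chosen offsets gives property~(2). Your initial reduction to ergodic $\mu$ is a clean simplification that the paper does not use, and your combinatorial estimate for~(1) is of the same type as the paper's inequality~\eqref{hippiedharma}, up to a slightly miscounted boundary constant ($2N(N-1)$ rather than $N^2$, which is immaterial in the limit).

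The gap lies in the dichotomy used to build the reference configuration $z$. You claim that, after passing to a subsequence, either $\kappa_{n_j}$ is constant or the windows $W_j=[\kappa_{n_j}-N,\kappa_{n_j}+n_j+N)$ are pairwise disjoint, and you rely on disjointness to make the collage $z_m:=x_{m-\kappa_{n_j}}$ for $m\in W_j$ well-defined. That dichotomy is false. If $\kappa_n=-\lfloor n/2\rfloor$, or more generally $\kappa_n\approx cn$ with $-1<c<0$, then every window $W_n$ with $n>2N$ contains position $0$, so no two are disjoint; and $\kappa_{n_j}$ cannot be made constant along an infinite subsequence either, since it takes each value at most once. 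Nothing in the hypotheses rules out such linearly drifting near-optimal alignments: $\phi$ is an arbitrary element of $\ell_\infty(\mathbb{Z}_2\times\mathbb{Z})$, so the near-optimal offset at scale $n$ may sit far from the offset at a smaller or larger scale. When the windows overlap no consistent collage exists, since the same position $m$ would need $x_{m-\kappa_{n_j}}=x_{m-\kappa_{n_{j'}}}$ for two distinct offsets. The paper confronts this difficulty directly rather than evading it: it keeps the overlapping windows, assigns each integer to at most one window via the recursively defined sets $B_j:=\{k_j,\ldots,k_j+n_j-N\}\setminus\bigcup_{i<j}B_i$, and bounds the contribution of the excluded positions using the lacunarity condition~\eqref{eq:enjay}, which forces $n_j$ to dominate $\sum_{i<j}n_i$. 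Bookkeeping of that kind is unavoidable here and would need to be added before your argument closes.
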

We may now present the proof the theorem stated at the beginning of the section. Our strategy is influenced by the proof of \cite[Theorem 3.2]{Je06a}.

\begin{proof}[Proof of Theorem \ref{pieless-in-gaza}.]
For each finite word $\omega$ and real number $\varepsilon>0$, let
\[\mathcal{O}_{\omega,\varepsilon}:=\left\{\phi \in \ell_\infty(\mathbb{Z}_2 \times \mathbb{Z}) \colon |\mu_1([\omega])-\mu_2([\omega])|<\varepsilon\text{ for all }\mu_1,\mu_2 \in \mathcal{M}_{\max}(\phi)\right\}.\]
Since invariant measures are completely determined by their values on cylinder sets, we have
\[\left\{\phi \in \ell_\infty(\mathbb{Z}_2 \times \mathbb{Z}) \colon \mathcal{M}_{\max}(\phi)\text{ is a singleton}\right\}=\bigcap_{\omega}\bigcap_{n=1}^\infty \mathcal{O}_{\omega,2^{-n}}.\]
Hence to prove the theorem it is sufficient to show that each of the sets $\mathcal{O}_{\omega,\varepsilon}$ is open and dense in $\ell_\infty(\mathbb{Z}_2\times\mathbb{Z})$. Since each $\mathcal{O}_{\omega,\varepsilon}$ is open by Lemma \ref{gfs}, we have only to show that each of these sets is also dense.

Fix $\varepsilon>0$ and a word $\omega$ of length $N$, and let $\phi \in \ell_\infty(\mathbb{Z}_2 \times \mathbb{Z})$. Since $\mathcal{M}_{\max}(\phi)$ is compact we may choose $\mu \in \mathcal{M}_{\max}(\phi)$ such that $\mu([\omega]) \geq \nu([\omega])$ for all $\nu \in \mathcal{M}_{\max}(\phi)$. Let $\psi \in \ell_\infty(\mathbb{Z}_2 \times \mathbb{Z})$ be as given by Lemma \ref{xyz} in respect of the function $\phi$ and measure $\mu$. For each $n \geq 1$ define $\phi_n:=\phi+2^{-n}\psi$. We will show that $\phi_n \in \mathcal{O}_{\omega,\varepsilon}$ for all sufficiently large $n$.

 We claim that if $(\nu_n)_{n=1}^\infty$ is a sequence of measures such that $\nu_n \in \mathcal{M}_{\max}(\phi_n)$ for each $n \geq 1$, then $\nu_n([\omega]) \to \mu([\omega])$ in the limit as $n \to \infty$. To prove the claim we first observe that by Lemma \ref{xyz} and Lemma \ref{le:simples} we have for every $n \geq 1$
\begin{align*}\log\varrho\left(L_0^{\phi},L_1^{\phi}\right)+2^{-n}N\mu([\omega]) &\leq \log\varrho\left(L_0^{\phi_{n}},L_1^{\phi_n}\right)\\
&=\Lambda(\phi_{n},\nu_{n}) \\
&\leq \Lambda(\phi,\nu_{n}) +\Lambda(2^{-n}\psi,\nu_{n})\\
& \leq \log\varrho\left(L_0^\phi,L_1^\phi\right) + 2^{-n}N\nu_{n}([\omega]),\end{align*}
and therefore $\nu_n([\omega]) \geq \mu([\omega])$ for all $n$. In particular $\liminf_{n \to \infty} \nu_n([\omega]) \geq \mu([\omega])$. Now choose a strictly increasing sequence of natural numbers $(n_j)_{j=1}^\infty$ such that $\lim_{j \to \infty} \nu_{n_j}([\omega])=\limsup_{n \to \infty}\nu_n([\omega])$. Since each $\nu_{n_j}$ belongs to $\mathcal{M}_{\max}(\phi_{n_j})$, using Lemma \ref{kuratowski} we may find a finer subsequence $(n_j)$ and measure $\nu \in \mathcal{M}_{\max}(\phi)$ such that $\nu_{n_j} \to \nu$ in the weak-* topology as $j \to \infty$. By our choice of $\mu$ we have $\nu([\omega]) \leq \mu([\omega])$, and therefore
\[\limsup_{n \to \infty} \nu_n([\omega])=\lim_{j \to \infty}\nu_{n_j}([\omega]) = \nu([\omega]) \leq \mu([\omega])\]
which proves the claim. Using the claim it follows easily that
\[\lim_{n \to \infty} \sup\left\{\nu([\omega]) \colon \nu \in \mathcal{M}_{\max}(\phi_n)\right\} = \lim_{n \to \infty} \inf\left\{\nu([\omega]) \colon \nu \in \mathcal{M}_{\max}(\phi_n)\right\} =\mu([\omega])\]
and hence for all sufficiently large $n$ we have $\phi_n \in \mathcal{O}_{\omega,\varepsilon}$ as required. Since $\phi_n \to \phi$ in the limit as $n \to \infty$ we deduce that $\phi \in\overline{\mathcal{O}_{\omega,\varepsilon}}$, and since $\phi$ is arbitrary it follows that $\mathcal{O}_{\omega,\varepsilon}$ is dense. This completes the proof of the theorem.
\end{proof}

\section{Proof of Lemma \ref{xyz}}\label{se:lemmer}

 Let $\omega$ be a word of length $N$, and let $\mu \in\mathcal{M}_{\max}(\phi)$. If $\mu([\omega])=0$ then we may simply take $\psi \equiv 0$, so we shall assume for the remainder of the proof that $\mu([\omega])>0$.

The function $\psi \in \ell_\infty(\mathbb{Z}_2 \times \mathbb{Z})$ will be drawn from a class of functions defined as follows. Let $A \subseteq \mathbb{Z}$ be an arbitrary subset of the integers. For each $\ell \in A$ define a function $\psi_\ell \in \ell_\infty(\mathbb{Z}_2 \times \mathbb{Z})$ by $\psi_\ell(\omega_{i},\ell+i)=1$ and $\psi_\ell(1-\omega_{i},\ell+i)=-N$ for all $i=0,\ldots,N-1$, and $\psi_\ell(a,k)=0$ otherwise. Clearly each $\psi_\ell$ satisfies $-N \leq \psi_\ell \leq 1$, and the estimate
\[-N^2 \leq \sum_{i=a}^b \psi_\ell(x_i,k+i) \leq N\]
is valid for all $x \in \Sigma_2$, $k \in \mathbb{Z}$ and $a \leq b$. Similarly, for every $(a,k) \in \mathbb{Z}_2 \times \mathbb{Z}$ we have $\psi_\ell(a,k) \neq 0$ for at most $N$ different values of $\ell\in A$. We may therefore define the function $\psi_A \in \ell_\infty(\mathbb{Z}_2 \times \mathbb{Z})$ associated to a set $A\subseteq \mathbb{Z}$ by $\psi_A:=\sum_{\ell \in A} \psi_\ell$, and the preceding observations imply that this series converges pointwise and satisfies $-N^2 \leq \psi_A \leq N$.

We first show that the function $\psi_A$ satisfies (i) irrespective of the precise choice of the set of integers $A$. Fix an arbitrary set $A\subseteq \mathbb{Z}$. In order to prove our assertion we claim that for every $x \in \Sigma_2$, $k \in \mathbb{Z}$ and $n >N$,
\begin{equation}\label{hippiedharma}\sum_{i=0}^{n-1} \psi_A(x_i,k+i) \leq 2N(N-1) + N \sum_{i=0}^{n-1}\chi_{[\omega]}(\sigma^ix).\end{equation}

Let us fix $x \in \Sigma_2$, $k \in \mathbb{Z}$ and $n > N$, and prove the claim. If $\ell \leq k-N$ then $\psi_\ell(x_i,k+i)$ is zero for all $i=0,\ldots,n-1$, since $k+i \geq k\geq \ell+N$ for all such $i$. Similarly, if $\ell \geq n+k$ then $\psi_\ell(x_i,k+i)=0$ for all $i=0,\ldots,n-1$ since $k+i \leq k+n -1< \ell$. We therefore have
\[\sum_{i=0}^{n-1}\psi_A(x_i,k+i)=\sum_{i=0}^{n-1}\sum_{\ell \in A}\psi_\ell(x_i,k+i) = \sum_{\ell \in A \cap [k-N+1,n+k-1]}\sum_{i=0}^{n-1} \psi_\ell(x_i,k+i).\]
For each $\ell \in A$ the quantity $\psi_\ell(x_i,k+i)$ can be nonzero for at most $N$ different values of $i \in \mathbb{Z}$, so we have $\sum_{i=0}^{n-1}\psi_\ell(x_i,k+i) \leq N \sup \psi_\ell =N$ for every $\ell \in A$. It follows that
\begin{equation}\label{indianmafiainaction}\sum_{i=0}^{n-1}\psi_A(x_i,k+i) \leq 2N(N-1) + \sum_{\ell \in A \cap [k,n+k-N]}\sum_{i=0}^{n-1} \psi_\ell(x_i,k+i).\end{equation}
If $\ell \in A \cap [k,n+k-N]$, then since $0 \leq \ell-k \leq \ell-k+N -1\leq n-1$ we have
\begin{equation}\label{lagerlagerlager}\sum_{i=0}^{n-1} \psi_\ell(x_i,k+i)=\sum_{i=\ell-k}^{\ell-k+N-1} \psi_\ell(x_i,k+i)=\sum_{j=0}^{N-1}\psi_\ell(x_{\ell-k+j},\ell+j).\end{equation}
Now, if $\sigma^{\ell-k}x \in [\omega]$ then $\omega_{j}=x_{\ell-k+j}$ for every $j=0,\ldots,N-1$ and the sum \eqref{lagerlagerlager} is equal to $N$ by the definition of $\psi_\ell$. On the other hand, if $\sigma^{\ell-k}x \notin [\omega]$ then for at least one such $j$ we have $x_{\ell-k+j}\neq \omega_j$ and therefore $\psi_\ell(x_{\ell-k+j},\ell+j) =-N$. For each other $j$ in the range $0 \leq j \leq N-1$ we clearly have $\psi_\ell(x_{\ell-k+j},\ell+j) \leq 1$, and it follows that when $\sigma^{\ell-k}x \notin [\omega]$ the sum \eqref{lagerlagerlager} is negative. We have shown in particular that for every $\ell \in A \cap [k,n+k-N]$
\[\sum_{i=0}^{n-1} \psi_\ell(x_i,k+i) \leq N\chi_{[\omega]}(\sigma^{\ell-k}x)\]
and it follows that
\begin{align*}\sum_{\ell \in A \cap [k,n+k-N]}\sum_{i=0}^{n-1} \psi_\ell(x_i,k+i) &\leq \sum_{\ell \in A \cap [k,n+k-N]}N\chi_{[\omega]}(\sigma^{\ell-k}x)\\
&\leq N\sum_{\ell =k}^{n+k-N}\chi_{[\omega]}(\sigma^{\ell-k}x)\\
&= N\sum_{i=0}^{n-N}\chi_{[\omega]}(\sigma^{i}x),\end{align*}
which combined with \eqref{indianmafiainaction} implies the claimed inequality \eqref{hippiedharma}.

To see that the validity of \eqref{hippiedharma} implies the validity of (i) for the function $\psi:=\psi_A$, irrespective of the choice of $A \subseteq \mathbb{Z}$, we argue as follows. Let $\nu \in \mathcal{M}_\sigma$ be any invariant measure. For each $n >N$ and $x \in \Sigma_2$ we have
\[\sup_{k \in \mathbb{Z}} \sum_{i=0}^{n-1}\psi_A(x_i,k+i) \leq 2N(N-1)+N\sum_{i=0}^{n-1}\chi_{[\omega]}(\sigma^ix)\]
by \eqref{hippiedharma}, and so by integration we have for every $n >N$
\[\frac{1}{n}\int \left(\sup_{k \in \mathbb{Z}}\sum_{i=0}^{n-1}\psi_A(x_i,k+i)\right)d\nu(x) \leq \frac{2N(N-1)}{n} + N\nu([\omega]).\]
Taking the limit $n \to \infty$ clearly yields (i) as claimed. We conclude that $\psi_A$ satisfies (i) for every choice of the set $A\subseteq \mathbb{Z}$.

We shall now prove that there exists a set $A \subseteq \mathbb{Z}$ such that the function $\psi_A$ also satisfies (ii). By the subadditive ergodic theorem for general invariant measures there exists a measurable function $\Phi \colon \Sigma_2 \to \mathbb{R}$ such that
\[\mu\left(\left\{x \in \Sigma_2 \colon \lim_{n \to \infty} \frac{1}{n}\sup_{k \in \mathbb{Z}}\sum_{i=0}^{n-1} \phi(x_i,k+i) = \Phi(x)\right\}\right)=1\]
and
\[\int \Phi\,d\mu = \lim_{n \to \infty} \frac{1}{n}\int \left(\sup_{k \in \mathbb{Z}} \sum_{i=0}^{n-1}\phi(x_i,k+i)\right)d\mu(x) =\Lambda(\phi,\mu)=\log\varrho\left(L_0^\phi,L_1^\phi\right).\]
It follows from the definition of the joint spectral radius that
\[\mu\left(\left\{x \in \Sigma_2 \colon \Phi(x)>\log\varrho\left(L_0^\phi,L_1^\phi\right)\right\}\right)=0,\]
and since $\int \Phi\,d\mu = \log\varrho(L_0^\phi,L_1^\phi)$ we deduce that $\Phi(x)=\log\varrho(L_0^\phi,L_1^\phi)$ $\mu$-a.e. By the Birkhoff ergodic theorem for general invariant measures there exists $\overline{\chi} \colon \Sigma_2 \to \mathbb{R}$ such that $\int \overline{\chi}\,d\mu=\mu([\omega])$ and
\[ \mu\left(\left\{\lim_{n \to \infty} \frac{1}{n}\sum_{i=0}^{n-1} \chi_{[\omega]}(\sigma^i x)=\overline{\chi}(x)\right\}\right)=1.\]
Combining these results we deduce that there exists $z \in \Sigma_2$ such that
 \begin{equation}\label{eq:z-good-for-chi}\lim_{n \to \infty}\frac{1}{n}\sum_{i=0}^{n-1}\chi_{[\omega]}(\sigma^iz)=\overline{\chi}(z) \geq \mu([\omega])\end{equation}
and
\begin{equation}\label{eq:z-good-for-phi}\lim_{n \to \infty}\frac{1}{n} \sup_{k \in \mathbb{Z}}\sum_{i=0}^{n-1}\phi(z_i,k+i)=\log\varrho\left(L_0^\phi,L_1^\phi\right).\end{equation}
Let us fix such a $z$ for the remainder of the proof. Define a strictly increasing sequence of natural numbers $(n_j)_{j=1}^\infty$ as follows: let $n_1:=N$ and, the integers $n_1,\ldots,n_{j-1}$ having been defined, choose $n_j$ to be the smallest natural number such that
\begin{equation}\label{eq:enjay}\frac{1}{n_j}\sum_{i=1}^{j-1} n_i<\frac{1}{2^{j+1}}.\end{equation}
In view of \eqref{eq:z-good-for-phi} we may choose a sequence of integers $(k_j)_{j=1}^\infty$ such that
\begin{equation}\label{eq:k-good-for-phi}\lim_{j \to \infty} \frac{1}{n_j}\sum_{i=0}^{n_j-1}\phi(z_i,k_j+i)=\log\varrho\left(L_0^\phi,L_1^\phi\right).\end{equation}
Let us define a sequence of sets $B_j\subseteq \mathbb{Z}$ inductively by taking $B_0:=\emptyset$, and $B_j := \{k_j,\ldots,k_j+n_j-N\} \setminus \bigcup_{i=0}^{j-1}B_i$ for each $j \geq 1$ once the sets $B_0,\ldots,B_{j-1}$ have been defined. Now define
\[A_j:=\left\{\ell \in B_j \colon \sigma^{\ell-k_j}z \in [\omega]\right\}\]
for each $j \geq 0$, and
\[A:=\bigcup_{j=0}^\infty A_j.\]
We observe that the sets $B_j$ are pairwise disjoint, and since $A_j \subseteq B_j$ for each $j \geq 0$ the sets $A_j$ are pairwise disjoint also. By construction we have
\[\{k_j,\ldots,k_j+n_j-N\} \subseteq \bigcup_{i=0}^j B_j\]
and using pairwise disjointness this immediately implies 
\[[k_j,k_j+n_j-N] \cap \bigcup_{i=j+1}^\infty B_i = \emptyset\]
for every $j \geq 1$. This last expression in turn implies
\begin{equation}\label{eq:a-decomposition}A \cap [k_j,k_j+n_j-N] = \bigcup_{i=0}^j A_i \cap [k_j,k_j+n_j-N] = A_j \cup \left(\bigcup_{i=0}^{j-1} A_i \cap [k_j,k_j+n_j-N]\right)\end{equation}
for every $j \geq 1$. Finally we define
\[C_j:=\left\{\ell \in \mathbb{Z} \colon k_j \leq \ell \leq k_j + n_j -N\text{ and } \sigma^{\ell-k_j}z \in [\omega]\right\}\] 
for every $j \geq 1$. We observe that $A_j=  C_j \setminus \bigcup_{i=0}^{j-1}B_i$ for every $j \geq 1$.

By definition we have $\psi_\ell(a,k)=0$ when $k < \ell$ and when $k \geq\ell+N$, so if $k_j+n_j\leq \ell$ or $\ell \leq k_j-N$ then $\sum_{i=0}^{n_j-1}\psi_\ell(z_i,k_j+i)=0$ since in this case every summand is zero. Thus
\[\sum_{i=0}^{n_j-1}\psi_A(z_i,k_j+i)=\sum_{i=0}^{n_j-1}\sum_{\ell \in A} \psi_\ell(z_i,k_j+i) = \sum_{i=0}^{n_j-1}\sum_{\ell \in A \cap [k_j-N+1,k_j+n_j-1]} \psi_\ell(z_i,k_j+i)\]
and therefore
\begin{eqnarray*}\lefteqn{\left|\sum_{i=0}^{n_j-1}\sum_{\ell \in A} \psi_\ell(z_i,k_j+i)-\sum_{i=0}^{n_j-1}\sum_{\ell \in A \cap [k_j,k_j+n_j-N]} \psi_\ell(x_i,k_j+i)\right|}\\
& &\leq \sum_{\ell \in A \cap \left([k_j-N+1,k_j-1]\cup[k_j+n_j-N+1,k_j+n_j-1]\right)}\left|\sum_{i=0}^{n_j-1} \psi_\ell(z_i,k_j+i)\right|\leq 2N^2(N-1)\end{eqnarray*}
where we have used the fact that $|\sum_{i =0}^{n_j-1}\psi_\ell(x_i,k_j+i)| \leq N^2$ for every $\ell \in A$. Now, using \eqref{eq:a-decomposition}, the fact that the sets $A_r$ are pairwise disjoint and finally the inequality \eqref{eq:enjay} we find that the difference 
\[\left|\sum_{i=0}^{n_j-1}\sum_{\ell \in A \cap [k_j,k_j+n_j-N]} \psi_\ell(z_i,k_j+i) - \sum_{i=0}^{n_j-1}\sum_{\ell \in A_j} \psi_\ell(z_i,k_j+i)\right|\]
is bounded by
\begin{align*}\left|\sum_{i=0}^{n_j-1}\sum_{r=0}^{j-1}\sum_{\ell \in A_r \cap [k_j,k_j+n_j-N]}\psi_\ell(z_i,k_j+i)\right|&\leq\sum_{r=0}^{j-1}\sum_{\ell \in A_r}\left|\sum_{i=0}^{n_j-1}\psi_\ell(z_i,k_j+i)\right|\\
&\leq \sum_{r=0}^{j-1}\sum_{\ell \in B_r}\left|\sum_{i=0}^{n_j-1}\psi_\ell(z_i,k_j+i)\right|\\
&\leq N^2\sum_{r=0}^{j-1}n_r \leq \frac{N^2n_j}{2^{j+1}}\end{align*}
and in a similar fashion
\begin{align*}\left|\sum_{i=0}^{n_j-1}\sum_{\ell \in A_j} \psi_\ell(z_i,k_j+i) - \sum_{i=0}^{n_j-1}\sum_{\ell \in C_j} \psi_\ell(z_i,k_j+i)\right| &\leq \sum_{\ell \in C_j \setminus A_j}\left|\sum_{i=0}^{n_j-1}\psi_\ell(z_i,k_j+i)\right|\\
&\leq \sum_{\ell \in \bigcup_{r=0}^{j-1}B_j}\left|\sum_{i=0}^{n_j-1}\psi_\ell(z_i,k_j+i)\right|\\
&\leq N^2\sum_{r=0}^{j-1}n_r \leq \frac{N^2n_j}{2^{j+1}}.\end{align*}
Combining the three estimates above we obtain the inequality
\begin{equation}\label{bombaybynight}\sum_{i=0}^{n_j-1}\psi_A(z_i,k_j+i) \geq\left( \sum_{i=0}^{n_j-1}\sum_{\ell \in C_j}\psi_\ell(z_i,k_j+i) \right)- 2N^2(N-1) - \frac{N^2n_j}{2^j}\end{equation}
for every $j \geq 1$.

It remains to give a lower bound for the sum on the right-hand side of \eqref{bombaybynight}. Let $\ell \in C_j$. Since $\psi(a,k)=0$ when $k< \ell$ and when $k\geq\ell+N$ we have
\[\sum_{i=0}^{n_j-1}\psi_\ell(z_i,k_j+i)=\sum_{i=\ell-k_j}^{\ell-k_j+N-1}\psi_\ell(z_i,k_j+i)=\sum_{i=0}^{N-1} \psi_\ell(z_{\ell-k_j+i},\ell+i),\]
where we have used the fact that $0 \leq \ell-k_j \leq \ell-k_j+N-1 \leq n_j-1$ by the definition of $C_j$. 
By the definition of $C_j$ we also know that $\sigma^{\ell-k_j}z \in [\omega]$, which is to say that $z_{\ell-k_j+i}=\omega_i$ for $i=0,\ldots,N-1$, so by the definition of $\psi_\ell$ the sum displayed above is equal to precisely $N$. It follows that
\[\sum_{i=0}^{n_j-1}\sum_{\ell \in C_j } \psi_\ell(z_i,k_j+i) =\sum_{\ell \in C_j }N = N\sum_{i=0}^{n_j-N} \chi_{[\omega]}(\sigma^iz)\]
and so by \eqref{bombaybynight}
\[\sum_{i=0}^{n_j-1}\psi_A(z_i,k_j+i)  \geq  N\sum_{i=0}^{n_j-N} \chi_{[\omega]}(\sigma^iz) - 2N^2(N-1) -\frac{N^2n_j}{2^j}\]
for every $j \geq 1$. Combining this result with \eqref{eq:z-good-for-chi} yields
\[\liminf_{j \to \infty} \frac{1}{n_j}\sum_{i=0}^{n_j-1}\psi(z_i,k_j+i) \geq \lim_{j \to \infty} \frac{N}{n_j}\sum_{i=0}^{n_j-1}\chi_{[\omega]}(\sigma^iz) \geq N\mu([\omega]),\]
and in combination with \eqref{eq:k-good-for-phi} we deduce that for every real number $\lambda>0$
\begin{align*}\log\varrho\left(L_0^{\phi+\lambda\psi},L_1^{\phi+\lambda \psi}\right) &=\lim_{n \to \infty}\sup_{x \in \Sigma_2}\sup_{k \in \mathbb{Z}} \frac{1}{n}\sum_{i=0}^{n-1}(\phi+\lambda\psi)(x_i,k+i)\\
&\geq \liminf_{j \to \infty} \frac{1}{n_j}\sum_{i=0}^{n_j-1} (\phi+\lambda\psi)(z_i,k_j+i)\\
&\geq \log\varrho\left(L_0^\phi,L_1^\phi\right)+\lambda N\mu([\omega]).\end{align*}
This completes the proof that $\psi:=\psi_A$ satisfies (ii) and hence completes the proof of the lemma.

\section{Acknowledgments}

The author was supported by EPSRC grant EP/L026953/1.

\bibliographystyle{siam}
\bibliography{Gurvits-bib}
\end{document}